\newcommand{\espcond}[2]{E\mathopen{}\left\{#1\middle|#2\right\}}
\newtheorem{theorem}{Theorem}[section]
\newtheorem{proposition}[theorem]{Proposition}
\newtheorem{remark}{Remark}[section]
\newtheorem{example}{Example}[section]
\newtheorem{assumption}{Assumption}
\def\1g{1\hskip -3pt \mbox{l}}
    \title{Powers correlation analysis of non-stationary illiquid assets}
\author{
{\sc Valentin Patilea\footnote{CREST Ensai,
Campus de Ker-Lann,
51 Rue Blaise Pascal,
BP 37203, 35172 BRUZ Cedex
FRANCE; email: patilea@ensai.fr. This author acknowledges
support from the research program \emph{Mod\`eles et traitements math\'ematiques des donn\' ees en tr\`es grande dimension,} of Fondation du Risque and MEDIAMETRIE}
\qquad \qquad
Hamdi Ra\"{\i}ssi\footnote{Instituto de Estad\'{\i}stica, PUCV,
Errazuriz 2734, Valpara\'{\i}so, CHILE; email: hamdi.raissi@pucv.cl. This author  acknowledges the ANID funding Fondecyt 1201898.}
\qquad \qquad
}
}
\begin{document}

    \maketitle


    \abstract{In this paper, the higher order dynamics of individual illiquid stocks are investigated. We show that considering the classical powers correlation could lead to a spurious assessment of the volatility persistency or long memory volatility effects, if the zero returns probability is non-constant over time. In other words, the classical tools are not able to distinguish between long-run volatility effects, such as IGARCH, and the case where the zero returns are not evenly distributed over time. As a consequence, tools that are robust to changes in the degree of illiquidity are proposed. Since a time-varying zero returns probability could potentially be accompanied by a non-constant unconditional variance, we then develop powers correlations that are also robust in such a case. In addition, note that the tools proposed in the paper offer a rigorous analysis of the short-run volatility effects, while the use of the classical power correlations lead to doubtful conclusions. The Monte Carlo experiments, and the study of the absolute value correlation of some stocks taken from the Chilean financial market, suggest that the volatility effects are only short-run in many cases.}

\quad

\textbf{\em Keywords:}
 Higher order dynamics; Illiquid assets; Kernel smoothing; Missing observations; Portmanteau test\\

\section{Introduction}
\label{intro}

The analysis of the returns powers serial correlation structures, is a routine task in quantitative finance. For instance, considering a daily stock return $r_t$, the lack of second order serial correlations ($corr(r_t^2,r_{t-h}^2)=0$, $h\neq0$), suggests no volatility clustering pattern. In particular, this would mean that the analyzed asset is not subject to volatility spillover from the market. Also, if we conclude that a given asset has a constant volatility, it can be classified as a safe haven portfolio component in times of crises, as it is not subject to higher risk levels. Moreover, when a relatively large $h$ is examined, $corr(r_t^2,r_{t-h}^2)=0$ means there is no shocks persistency, or long memory effects for the volatility. Conversely, if we find some linear dynamics in the returns powers, then a stochastic volatility is usually fitted. This means that the risk levels should be computed conditional on the date. See the algorithm in Francq and Zako\"{\i}an (2019, p. 339). Currently there are many papers where the autocorrelations of returns powers are examined. See Taylor (2007) and St\u{a}ric\u{a} and Granger (2005) among many others. Several tools for investigating the assets powers correlations are available in the literature, and widely used nowadays. The two prominent tests for analyzing the correlations of the squared returns are the ARCH-LM test, introduced by Engle (1982), and the portmanteau test, see McLeod and Li (1983). Since the two tests are equivalent, as shown for instance in Francq and Zako\"{\i}an (2019, p.148), throughout this paper we shall focus on the portmanteau test.

Although the above mentioned tools are widely used, their properties for illiquid assets are poorly investigated. We call an \emph{illiquid asset}, an asset with a daily price change probability less than 1, or equivalently with  a positive zero-return probability.
Stocks with many zero returns records are quite common in all financial markets. In particular, several authors have documented a large amount of illiquid stocks for emerging markets.  See, for instance, Lesmond (2005, p. 424--426) among many others. The main message  of this paper  is that a non-stationary probability of a daily price change, could spuriously lead to conclude that nonlinear effects are present in the series. Also, it can lead to a spurious assessment of the shocks persistence, or long memory effects for the volatility. To avoid such misleading analyses in the case of a time-varying daily price change probability, we provide adapted tools based on the power returns serial correlations. Several authors have underlined that non-stationary behaviors can generate so-called, stylized effects in finance. See, \emph{e.g.}, Mikosch and St\u{a}ric\u{a} (2004), Fry\'{z}lewicz (2005), Engle and Rangel (2008), Patilea and Ra\"{i}ssi (2014),  Ra\"{i}ssi (2018). In some sense, this paper complements the above references in the framework of illiquid assets.

Several patterns of non-constant daily price change probability can be commonly encountered in practice. In Figure \ref{ns-stocks}, representative examples taken from the Chilean stocks market are shown\footnote{The authors are grateful to Andres Celedon for research assistance.}. For the Cruzados stock, a long run decreasing trend can be observed for this probability. On the other hand, the Conchatoro stock shows an increasing probability of non zero returns, due to the fast developing Chilean economy during the period 2000-2008. Quick shifts can be suspected in many cases. The abrupt decrease of the daily price change probability in the bottom left panel of Figure \ref{ns-stocks}, is certainly the consequence of the take-over bid of Metlife on the pension funds administration company Provida during September 2013. On that occasion Metlife acquired more than 90\% of the share capital of Provida. Conversely, a rapid increase in the daily price change probability can be observed for the Molymet stock, due to the capital increase achieved in November 2010. Note that sometimes, stocks that seem to have a constant unconditional non-zero returns probability may be encountered (see the panel corresponding to the Lipigas and Las Condes stocks). However, it is important to underline that the standard tools, are not able to distinguish between a non-stationary probability of a daily price change, and the presence of nonlinear effects or unconditional heteroscedasticity. Indeed, a higher non-zero returns probability, can be, for instance, confused with a clustering of the volatility by the standard portmanteau test. As a consequence, a modified test taking into account a non-standard behavior of the non-zero returns probability will be proposed.

Our analysis is related to the so-called missing and amplitude modulation observations problems in time series. Indeed, a zero-return could be considered as a missing observation of a latent return. See, for instance, Parzen (1963), Dunsmuir and Robinson (1981), Stoffer and Toloi (1992) and the references therein. The main difference with respect to such contributions, comes from the fact that we allow the missing observations, the zero-returns in our case, to occur with a time-varying probability. We point out that this drastically changes the performance and behavior of standard tools, such as the portmanteau test.

The structure of this study is as follows. In Section \ref{main.res}, the detailed framework of our study is presented. The validity of the classical power correlations statistics when the zero returns probability is constant is underlined. On the other hand, we show that the standard tools are inadequate in the presence of a non-constant zero returns probability, or a time-varying unconditional variance. As a consequence, statistics which are robust to such commonly observed patterns are proposed in Section \ref{sec:corr} and \ref{sec:corr_adapt}. In Section \ref{num.exp}, Monte Carlo experiments are carried out, and the Taylor effect for the stocks presented in the Introduction is analyzed. Some concluding remarks are provided in Section \ref{concl.sec}.

\section{The framework of the study and validity of the classical powers correlations}
\label{main.res}

Consider a one-period, profit-and-loss, random latent continuous variable $\widetilde r_t$. Due to the illiquidity, some values of  $\widetilde r_t$ are replaced by zero. Let $r_1,\dots,r_n$ be the observed returns, with $n$ the sample size. To formalize the observation scheme we have in mind,
consider the binary process $(a_t)\subset \{0,1\}$. It is then assumed that  $r_t = a_t \widetilde r_t$. The process $(a_t)$, introduced by Parzen (1963), is commonly called the "amplitude modulated" sequence. The mechanism generating the $r_t$'s can be explained by a variety of facts, such as trading costs, or rounded prices. Throughout the paper, we focus on the case  $0<P(a_t=1)<1$.

When all the  $\widetilde r_t$ are available, that is when $P(a_t=1)=1$, the empirical serial correlations analysis of $|\widetilde r_t|^\delta$ is carried out for a variety of routine tasks. For instance, the serial correlations of powers returns are compared with adequate confidence bounds to determine some horizon where they seem to vanish. This may serve to evaluate the shocks persistency. Also, seasonality patterns can be identified (e.g. weekly market seasonality). Another important task would be to specify the volatility $\sigma_t$, if one assumes that
\begin{equation}\label{returns}
 \widetilde r_t=\sigma_t\eta_t,
\end{equation}
where $\sigma_t>0$, and the innovations process $(\eta_t)$ fulfills regularity conditions (see Assumption \ref{ident} below). More specifically, in quantitative finance, it is common to decide between the case where $\sigma_t=\sigma$ is constant, and the case where a stochastic $\sigma_t$ should be fitted to the data. Below, we describe in detail, two important examples, where the power returns serial correlations are widely used for modeling the volatility. In all the above tasks, when $P(a_t=1)=1$, the empirical power returns serial correlations
\begin{equation}\label{stat-std0}
\widehat{\Gamma}_0^{(\delta)} (m):=\left(\hat{\rho}_0^{(\delta)} (1),\dots,\hat{\rho}_0^{(\delta)} (m)\right)',\:\mbox{with}\:
\hat{\rho}_0^{(\delta)} (h):=\hat{\gamma}_0^{(\delta)} (h)\hat{\gamma}_0^{(\delta)} (0)^{-1},
\end{equation}
where
$
\hat{\gamma}_0^{(\delta)} (h)=n^{-1}\sum_{t=1+h}^{n}
\left(|\widetilde r_t|^\delta-\bar{\widetilde r}^{(\delta)}\right)\left(|\widetilde r_{t-h}|^\delta-\bar{\widetilde r}^{(\delta)} \right),
$
and $\bar{\widetilde r}^{(\delta)} =n^{-1}\sum_{t=1}^{n}|\widetilde r_t|^\delta$, are investigated.

\begin{example}[Testing for short-run dynamics in the stationary volatility]
\label{ex1}
Usually, the volatility is assumed stochastic strictly stationary (see Francq and Zako\"{\i}an (2019) and references therein), if the following null hypothesis is rejected: for some integer $m\geq 1$ and a given $\delta>0$,
\begin{equation}\label{hyp-test-2}
H_0^{(\delta)}:\Gamma_0^{(\delta)} (m)=0,
\end{equation}
where
\begin{equation}\label{Gamma_a=1}
\Gamma_0^{(\delta)}(m):=\left(\rho_0^{(\delta)}(1),\dots,\rho_0^{(\delta)}(m)\right)',
\end{equation}
with
$$
\rho_0^{(\delta)}(h) : = corr(|\widetilde r_t|^{(\delta)},| \widetilde r_{t-h} |^{(\delta)}):=\gamma_0^{(\delta)}(h)\gamma_0^{(\delta)}(0)^{-1},
$$
and
$$
\gamma_0^{(\delta)} (h) = E\left\{\left(|\widetilde r_t|^\delta-E(| \widetilde r_t|^\delta)\right)\left(| \widetilde r_{t-h} |^\delta-E(|\widetilde r_{t-h}|^\delta)\right)\right\}.
$$
In others words, a significant short-run empirical correlation for the sequence $|\widetilde r_t|^\delta$, 
indicates that the series displays the so-called volatility clustering pattern. Standard tools could be used to check  $H_0^{(\delta)}$ against $H_1^{(\delta)}:\Gamma_0^{(\delta)} (m) \neq 0$.  For instance, if $\delta=2$ one can use the portmanteau test of  McLeod and Li (1983) which is based on a quadratic form of \eqref{stat-std}.
\end{example}

\begin{example}[Assessment of the shock persistency, or the long memory effects in the stationary volatility]
\label{ex2}
In order to determine the strength of the shock persistency, it is common to examine the power autocorrelation function plots. Also, practitioners may consider some model allowing for long memory to capture observed nonlinear effects at large horizons, if significant $\hat{\rho}_0^{(\delta)} (h)$ are observed for $h\gg0$. For instance, the reader is referred to Francq and Zako\"{\i}an (2019), equation (2.53) on the long memory property for the squares correlations of LARCH models. Conversely, if the $\hat{\rho}_0^{(\delta)} (h)$ quickly decrease below adequate confidence bounds, then a classical GARCH model may be sufficient to describe the stochastic volatility.
\end{example}

Usually, no serial correlation for $|\widetilde r_t|^\delta$ means that the data is not subject to stylized financial effects, so $\sigma_t$ is constant. In such a case, the corresponding asset would not be exposed to extreme crises risks. Let us point out that several authors have underlined that a purely deterministic non-constant $\sigma_t$, may produce a spurious significant empirical serial correlation of powers returns. See, e.g., St\u{a}ric\u{a} and Granger (2005) or Patilea and Ra\"{\i}ssi (2014). In order to encompass structural breaks in the variance, a deterministic $\sigma_t$ will be considered in our analysis.

Let us underline that in many cases, $\delta$ is taken to be equal to 2.
However, other values for $\delta$ could be useful in practice, as for instance $\delta=1$, to provide evidence of a possible Taylor effect; see Taylor (2007).

\begin{remark}
In Section \ref{tv-prob-sec} below, $P(a_t=1)$ is allowed to vary over time, which entails that $E(|r_t|^{\delta})$ is also non-constant. Despite this, throughout the paper, we will say that the data displays unconditional heteroscedasticity (or time-varying variance), only when    $\sigma_t^2$ is non-constant. This is because the heteroscedasticity of $(r_t)$ is produced by a time-varying $P(a_t=1)$, which is in some sense spurious, while $\sigma_t^2$ determines the unconditional heteroscedasticity/homoscedasticity behavior of the underlying $(\widetilde r_t)$.

In the same way, denoting by   $\mathcal{F}_{t-1}=\sigma\{\widetilde r_{l}, a_l:l<t\}$, the $\sigma$-field generated by the past values of   $\widetilde  r_t$ and $a_t$,  $E(\widetilde r_t^2|\mathcal{F}_{t-1})$ will be referred to as the volatility. Indeed, if   $E(\widetilde r_t^2|\mathcal{F}_{t-1})$ is stochastic,
then the so-called clustering of extreme values can be observed. Conversely, $E(r_t^{2}|\mathcal{F}_{t-1})$ could be time-varying because of a change in the degree of illiquidity,   while the volatility   $E(\widetilde r_t^2|\mathcal{F}_{t-1})$ is constant (degenerate), which means that the so-called volatility clustering is not present.
\end{remark}

\subsection{Positive and constant zero return probability}

In the following, we focus on the case where only $r_t = a_t \widetilde r_t $ is available, with $\widetilde r_t$ defined as in \eqref{returns}, and   $0<P(a_t=1)<1$. More precisely, we investigate the behavior of the  statistic \eqref{stat-std} which becomes
 the empirical power returns serial correlations
\begin{equation}\label{stat-std}
\widehat{\Gamma}_s^{(\delta)} (m):=\left(\hat{\rho}_s^{(\delta)} (1),\dots,\hat{\rho}_s^{(\delta)} (m)\right)',\:\mbox{with}\:
\hat{\rho}_s^{(\delta)} (h):=\hat{\gamma}_s^{(\delta)} (h)\hat{\gamma}_s^{(\delta)} (0)^{-1},
\end{equation}
where
$$
\hat{\gamma}_s^{(\delta)} (h)=n^{-1}\sum_{t=1+h}^{n}
\left(| r_t|^\delta-\bar{ r}^{(\delta)}\right)\left(| r_{t-h}|^\delta-\bar{ r}^{(\delta)} \right)\quad\text{ and } \quad \bar{ r}^{(\delta)} =n^{-1}\sum_{t=1}^{n}|r_t|^\delta.
$$

Throughout the paper, we assume that the following conditions hold true.

\begin{assumption}[Independence of the innovations and zero-return indicators]\label{ident0}
The realizations $\ldots (\eta_{t-1 },a_{t-1}),(\eta_t,a_t),(\eta_{t+1},a_{t+1})\ldots\in\mathbb R \times \{0,1\}$ are independent.
\end{assumption}

\begin{assumption}[Identification]\label{ident}
The innovations satisfy the conditions
$E(\eta_t|a_t=1)=0$ and $E(\eta_t^{2}|a_t=1)=1$.
\end{assumption}

A common specification in the literature, is to assume that the processes $(a_t)$ and $(\widetilde r_t)$ are independent of each other, with $(\widetilde r_t)$ iid, but allowing for some dependence for $(a_t)$. See, e.g., Dunsmuir and Robinson (1981), Stoffer and Toloi (1992). Nevertheless, such a framework is not appropriate to our task. Indeed, let us underline that our aim is to provide tools for the power correlations analysis, based on a constant $\sigma_t$ and $(\eta_t)$ independent (\emph{i.e.}, no short run dynamics or long memory effects). Moreover, recall that in our setting $r_t=a_t\widetilde r_t$, the $(a_t)$ sequence may be intuitively originated from $\widetilde r_t$. For instance, $a_t=0$ could be the consequence of a low value for $|\widetilde r_t|$. As a consequence, it is hard to believe that $(a_t)$ is dependent, while $(\widetilde r_t)$ is independent. In order to illustrate, this would correspond to observe clusterings of zeros, but no volatility clustering for $r_t\neq0$, which makes no sense in general. However, in order to ensure that relevant situations are within the scope of the paper, we clearly need to allow for dependent $a_t$ and $\eta_t$,    although each of  $(a_t)$ and $(\eta_t)$ is a sequence of  independent random variables. All the above arguments make Assumption \ref{ident0} reasonable for our task. Regarding Assumption \ref{ident}, it is natural to impose an identification condition on $r_t\neq0$ in our framework.
It is worthwhile to notice that, since the latent returns are not observed when $a_t=0$, one could set any values for $E(\eta_t|a_t=0)$  and $E(\eta_t^{2}|a_t=0)$. The data will not allow us to test the choice of the values for these conditional expectations when $a_t=0$. A similar  issue occurs with the so-called Missing-At-Random (MAR) condition in missing data problems, an assumption which is an inherently untestable; see Hristache and Patilea (2017).

The following proposition gives the asymptotic behavior of the powers returns serial correlation when $P(a_t=1)$ is constant. In the following, when $n\rightarrow \infty$, the almost sure convergence is denoted by $\stackrel{a.s.}{\longrightarrow}$, the convergence in distribution is signified by $\stackrel{d}{\longrightarrow}$, and the convergence in probability is denoted by $\stackrel{p}{\longrightarrow}$. Moreover, for any integer $m\geq 1$, $I_m$ is the identity matrix of dimension $m$.

\begin{proposition}\label{propostu}
 Let $\delta >0$, and suppose that $\sigma_t>0$ is constant, with Assumptions \ref{ident0} and  \ref{ident} hold true.  Moreover, assume that all the $(\eta_t,a_t)$'s have the same distribution such that $E\left(|\eta_{t}|^{4\delta}\right)<\infty$ with $0<P(a_t=1)<1$. Then,  for any integer $m\geq 1$, $\sqrt{n}\widehat{\Gamma}_s^{(\delta)} (m)\stackrel{d}{\longrightarrow}\mathcal{N}(0,I_m)$.
\end{proposition}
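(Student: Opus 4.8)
The plan is to reduce the claim to the classical central limit theorem for the sample autocorrelations of an i.i.d.\ sequence, and then conclude with Slutsky's lemma. First I would exploit that $\sigma_t\equiv\sigma$ is constant: by \eqref{returns}, $r_t=a_t\sigma\eta_t$, and since $a_t\in\{0,1\}$ we have $a_t^\delta=a_t$, so
\[
X_t:=|r_t|^\delta=\sigma^\delta\,a_t\,|\eta_t|^\delta .
\]
Under Assumption \ref{ident0} together with the assumption that all the $(\eta_t,a_t)$ share the same law, the pairs $(\eta_t,a_t)$ are i.i.d., hence so are the $X_t$, each being a function of $(\eta_t,a_t)$; moreover $E(X_t^4)=\sigma^{4\delta}E(a_t|\eta_t|^{4\delta})\le\sigma^{4\delta}E(|\eta_t|^{4\delta})<\infty$. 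Writing $\mu=E(X_t)$ and $\sigma_X^2=\mathrm{Var}(X_t)$, the variable $X_t$ equals $0$ with probability $P(a_t=0)\in(0,1)$ and is strictly positive with positive probability because $E(\eta_t^2\mid a_t=1)=1$ (Assumption \ref{ident}), so $\sigma_X^2>0$; note also $\gamma_s^{(\delta)}(0)=\sigma_X^2$ and $\gamma_s^{(\delta)}(h)=0$ for $h\ge1$ by independence. Thus $\widehat\Gamma_s^{(\delta)}(m)$ in \eqref{stat-std} is exactly the vector of the first $m$ sample autocorrelations of the i.i.d.\ sample $X_1,\dots,X_n$.

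Next I would treat the sample autocovariances. The strong law of large numbers gives $\hat\gamma_s^{(\delta)}(0)\stackrel{a.s.}{\longrightarrow}\sigma_X^2>0$. For $1\le h\le m$, writing $|r_t|^\delta-\bar r^{(\delta)}=(X_t-\mu)-(\bar r^{(\delta)}-\mu)$ with $\bar r^{(\delta)}-\mu=O_p(n^{-1/2})$, every term created by replacing $\mu$ by $\bar r^{(\delta)}$ is $O_p(n^{-1})=o_p(n^{-1/2})$; hence, with $Z_t:=X_t-\mu$,
\[
\sqrt n\,\hat\gamma_s^{(\delta)}(h)=n^{-1/2}\sum_{t=h+1}^{n}Z_tZ_{t-h}+o_p(1),\qquad h=1,\dots,m,
\]
and, since each difference $n^{-1/2}\sum_{t=h+1}^{m}Z_tZ_{t-h}$ involves at most $m$ summands, I may replace the lower limit $h+1$ by $m+1$ simultaneously for all $h$.

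It then remains to establish the joint convergence of $\bigl(n^{-1/2}\sum_{t=m+1}^{n}Z_tZ_{t-h}\bigr)_{h=1}^{m}$. Set $\xi_t:=(Z_tZ_{t-1},\dots,Z_tZ_{t-m})'$ and $\mathcal G_t=\sigma(X_s:s\le t)$. The sequence $(\xi_t)_{t>m}$ is stationary, ergodic and $m$-dependent, and since $X_t$ is independent of $X_{t-1},\dots,X_{t-m}$ one has $E(\xi_t\mid\mathcal G_{t-1})=0$, so $(\xi_t)$ is a vector martingale difference sequence; a short computation using the same independence and $E(Z_t)=0$ gives $E(\xi_t\xi_t')=\sigma_X^4 I_m$ and $E(\xi_t\xi_s')=0$ for $s\ne t$, so the long-run variance is $\sigma_X^4 I_m$. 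The Cram\'er--Wold device together with the CLT for stationary $m$-dependent sequences (equivalently, the martingale CLT for stationary ergodic differences) then yields $n^{-1/2}\sum_{t=m+1}^{n}\xi_t\stackrel{d}{\longrightarrow}\mathcal N(0,\sigma_X^4 I_m)$, whence, combining with the previous step and Slutsky's lemma,
\[
\sqrt n\,\widehat\Gamma_s^{(\delta)}(m)=\hat\gamma_s^{(\delta)}(0)^{-1}\,\sqrt n\bigl(\hat\gamma_s^{(\delta)}(1),\dots,\hat\gamma_s^{(\delta)}(m)\bigr)'\stackrel{d}{\longrightarrow}\sigma_X^{-2}\,\mathcal N(0,\sigma_X^4 I_m)=\mathcal N(0,I_m).
\]

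I do not expect a genuine obstacle, as the argument is essentially textbook once the reduction to an i.i.d.\ sequence is in place, and the moment bound $E(|\eta_t|^{4\delta})<\infty$ comfortably ensures all the second moments invoked above are finite. The two points that need care are: (i) checking that recentering by $\bar r^{(\delta)}$ rather than $\mu$ contributes only $o_p(n^{-1/2})$, which is routine but slightly tedious $O_p(n^{-1})$ bookkeeping; and (ii) verifying that the independence built into Assumption \ref{ident0} is precisely what forces every off-diagonal entry of the long-run variance to vanish and makes it proportional to $I_m$, so that the factor $\sigma_X^{-2}$ from $\hat\gamma_s^{(\delta)}(0)^{-1}$ exactly cancels and the limiting covariance is $I_m$, free of the scale $\sigma$ and of the moments of $|\eta_t|^\delta$ — this cancellation is the reason the classical un-normalized statistic remains valid when $P(a_t=1)$ is constant.
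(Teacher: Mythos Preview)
Your proposal is correct and is precisely the classical argument the paper has in mind: the paper explicitly omits the proof, noting that it ``relies on classical arguments'' and referring to McLeod and Li (1983), and your reduction to the i.i.d.\ sequence $X_t=|r_t|^\delta$ followed by the martingale/$m$-dependent CLT and Slutsky's lemma is exactly that standard route. There is nothing to add.
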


The reader is referred to McLeod and Li (1983) for the case $\delta=2$ and $P(a_t=1)=1$. In the case $0<P(a_t=1)<1$, using Assumptions \ref{ident0} and  \ref{ident}, for any $h>0$, we have
\begin{multline}
E\left( r_t r_{t-h} \right) = E\left( a_t a_{t-h} \espcond{\sigma_t \sigma_{t-h} \eta_t \eta_{t-h}}{ a_t, a_{t-h} }  \right) \\
\hspace{-4cm} =  \sigma_t^2 \espcond{ \eta_t \eta_{t-h}}{ a_t a_{t-h}=1 }P^2(a_t=1) \\
\hspace{1cm} = \sigma_t^2 \espcond{ \eta_{t-h}E(\eta_t |\eta_{t-h}, a_t a_{t-h}=1) }{ a_t a_{t-h}=1 }P^2(a_t=1) \\
= \sigma_t^2 \espcond{ \eta_{t-h}E(\eta_t |a_t =1) }{ a_t a_{t-h}=1 }P^2(a_t=1) =0,
\end{multline}
and this guarantees that $\widehat{\Gamma}_s^{(\delta)} (m)$ has a centered limit in distribution.
Next, the proof of Proposition \ref{propostu} relies on classical arguments and hence will be omitted. Proposition \ref{propostu} suggests that the standard statistic \eqref{stat-std}, can be used to build confidence intervals for investigating volatility effects when $P(a_t=1)$ is constant. Moreover, consider the test statistic $\mathcal{S}_s^m=n\widehat{\Gamma}_s^{(\delta)} (m)'\widehat{\Gamma}_s^{(\delta)} (m)$. The classical $S_s^m$ test consists in rejecting $H_0^{(\delta)}:\Gamma_0^{(\delta)} (m)=0$ at a given level $\alpha$, if $\mathcal{S}_s^m>\chi_{m,\alpha}^2$, where $\chi_{m,\alpha}^2$ is the $(1-\alpha)$th quantile of the $\chi_{m}^2$ distribution. Hence, in view of the outputs of this section, the usual test can be used safely in presence of a constant non-zero returns probability (i.e. possibly for the Lipigas stock in our real data  example). In the same way the classical autocorrelograms, plotting $\hat{\rho}_s^{(\delta)} (h)$ for a large $h$, can be used to investigate the shocks persistency or long memory effects. For these reasons, in \eqref{stat-std} the subscript "s" stands for "stationary" probability structure. 


\subsection{Time-varying zero return probability}
\label{tv-prob-sec}

In this part, we allow for a time-varying $P(a_t=1)$ as specified in Assumption \ref{tv-prob} below. 

\begin{assumption}[Independent innovations]\label{cst-mom}
The conditional distribution of  $\eta_t$ given  $a_t=1$ does not depend on $t$.
\end{assumption}

\begin{assumption}[Non-constant non-zero returns probability]\label{tv-prob} The time-varying probabilities $P(a_t=1)$ are given by $g(t/n)$,
    where $g(\cdot)$, is a measurable non constant deter\-ministic function, such that $0< g(u)< 1$ on the interval $(0,1]$, and $g(\cdot)$ satisfies a piecewise Lipschitz condition on  $(0,1]$.\footnote{For $u\leq 0$, the function considered in Assumption  \ref{tv-prob} is set constant and equal to  $g(0)=\lim_{u\downarrow0}g(u)$. Throughout the paper, the piecewise Lipschitz condition means: there exists a positive integer $p$ and some mutually disjoint intervals $I_1,\dots,I_p$ with $I_1\cup\dots\cup I_p=(0,1]$ such that $g(u)=\sum_{l=1}^p g_l(u){\bf 1}_{\{u\in I_l\}},$ $u\in(0,1],$ where $g_1(\cdot),\dots,g_p(\cdot)$ are Lipschitz smooth functions on $I_1,\dots,I_p,$ respectively.}
\end{assumption}

From Assumption \ref{ident0}, \ref{cst-mom} and \ref{tv-prob}, the observed process $(r_t)$ is independent, but  not identically distributed as $P(a_t=1)$ is time-varying. This setting will allow us to develop tools for investigating powers dynamics that are robust to a time-varying zero return probability. If the short run higher-order dynamics are investigated, the lack of dynamics in $(a_t)$ is reasonable under $H_0^{(\delta)}$. In the lack of long memory effects, and considering autocorrelations of large orders $h$, then the independence assumption for $(a_t)$ could be at least viewed as approximately valid.
Note that Assumption \ref{tv-prob} makes $0< P(a_t=1)<1$ for all $t\in(0,1]$. This is imposed for writing convenience, and could be relaxed by imposing $0<g(u)<1$ only on a sub-interval of $[0,1]$. The case $P(a_t=1)=0$ for all $t$ does not make sense, while in the  case $P(a_t=1)=1$ for all $t$ there is nothing to add since, with probability 1, $\widetilde r_t$ is always observed. Finally note that the rescaling device, introduced by Dahlhaus (1997), is commonly used for describing long-run trends or structural breaks of times series. See e.g. Phillips and Xu (2006), Xu and Phillips (2008), Cavaliere  and Taylor (2007, 2008), Kew and Harris (2009), Patilea and Ra\"{\i}ssi (2012, 2013) or Wang, Zhao and Li (2019).
The double subscript is skipped in order to keep the notations simple. The following proposition underlines the inadequacy of the statistic \eqref{stat-std} in the non-standard framework defined by Assumption \ref{tv-prob}.

\begin{proposition}\label{propostu2}
Let $\delta >0$, and suppose that $\sigma_t>0$ is constant. Moreover, suppose that Assumptions \ref{ident0} to \ref{tv-prob} hold true with $E\left(|\eta_{t}|^{2\delta}\right)<\infty$. Then, for any integer $m\geq 1$, $\widehat{\Gamma}_s^{(\delta)}(m)\stackrel{a.s.}{\longrightarrow}C_{0,a}\in \mathbb R^m,$ and all the components of the vector $C_{0,a} $ are equal and strictly positive.
\end{proposition}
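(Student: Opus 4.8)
The plan is to reduce the statement to three almost-sure limits — those of $\bar r^{(\delta)}$, of $\hat\gamma_s^{(\delta)}(0)$, and of $\hat\gamma_s^{(\delta)}(h)$ for $1\le h\le m$ — and then to read the conclusion off them. Write $X_t:=|r_t|^\delta=\sigma^\delta a_t|\eta_t|^\delta$ (using $a_t^\delta=a_t$ and $\sigma_t\equiv\sigma$), set $\mu_k:=E(|\eta_t|^k\mid a_t=1)$, which does not depend on $t$ by Assumption~\ref{cst-mom}, and $g_t:=g(t/n)=P(a_t=1)$. Conditioning on $a_t$ gives $E(X_t)=\sigma^\delta\mu_\delta g_t$ and $E(X_t^2)=\sigma^{2\delta}\mu_{2\delta}g_t$, and the across-$t$ independence in Assumption~\ref{ident0} gives, for $h\ge1$, $E(X_tX_{t-h})=E(X_t)E(X_{t-h})=\sigma^{2\delta}\mu_\delta^2 g_t g_{t-h}$; since $E(|\eta_t|^{2\delta})<\infty$ forces $\mu_{2\delta}<\infty$, all of these are finite and bounded uniformly in $t$, and moreover $\sup_t E\big((X_tX_{t-h})^2\big)=\sigma^{4\delta}\mu_{2\delta}^2\sup_t g_tg_{t-h}<\infty$.

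Next I would establish the strong laws $n^{-1}\sum_{t=1}^n(X_t-E X_t)\to0$, $n^{-1}\sum_{t=1}^n(X_t^2-E X_t^2)\to0$ and $n^{-1}\sum_{t=1+h}^n\big(X_tX_{t-h}-E(X_tX_{t-h})\big)\to0$ a.s., working on the probability space carrying the array (e.g.\ via a coupling $a_t=\mathbf{1}_{\{V_t\le g(t/n)\}}$ with auxiliary i.i.d.\ uniforms). For the first and third this is the routine Kolmogorov/maximal-inequality argument for independent, resp.\ finitely dependent (the sequence $(X_tX_{t-h})_t$ being dependent only at lag $h$), summands with uniformly bounded variances. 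The delicate one is $n^{-1}\sum X_t^2$, because $E\big((X_t^2)^2\big)$ need not be finite under the stated moment hypothesis; here I would truncate, using $\sum_t P(X_t^2>t)<\infty$ (Borel--Cantelli, since $\mu_{2\delta}<\infty$) to replace $X_t^2$ by $X_t^2\mathbf{1}_{\{X_t^2\le t\}}$, and then verify Kolmogorov's criterion $\sum_t t^{-2}\mathrm{Var}\big(X_t^2\mathbf{1}_{\{X_t^2\le t\}}\big)<\infty$ through the elementary bound $\sum_{t\ge y}t^{-2}=O(1/y)$ together with $E(|\eta_t|^{2\delta})<\infty$.

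The deterministic averages are Riemann sums of the piecewise-Lipschitz function $g$: $n^{-1}\sum_{t\le n}g_t\to\int_0^1 g$, $n^{-1}\sum_{t\le n}g_t^2\to\int_0^1 g^2$, and also $n^{-1}\sum_{t=1+h}^n g_t g_{t-h}\to\int_0^1 g^2$, the last because $|g_{t-h}-g_t|=O(h/n)$ off the at most $ph$ indices $t$ for which $[(t-h)/n,t/n]$ meets one of the $p$ breakpoints of $g$. Combining, $\bar r^{(\delta)}\to m:=\sigma^\delta\mu_\delta\int_0^1 g$ a.s., and from the expansion
\[
\hat\gamma_s^{(\delta)}(h)=\frac1n\sum_{t=1+h}^n X_tX_{t-h}-\bar r^{(\delta)}\frac1n\sum_{t=1+h}^n X_t-\bar r^{(\delta)}\frac1n\sum_{t=1+h}^n X_{t-h}+\frac{n-h}{n}\big(\bar r^{(\delta)}\big)^2
\]
one obtains $\hat\gamma_s^{(\delta)}(0)\to\sigma^{2\delta}\mu_{2\delta}\int_0^1 g-m^2$ and, for every $h=1,\dots,m$, $\hat\gamma_s^{(\delta)}(h)\to\sigma^{2\delta}\mu_\delta^2\int_0^1 g^2-m^2=\sigma^{2\delta}\mu_\delta^2\big(\int_0^1 g^2-(\int_0^1 g)^2\big)$ — the same value for every $h$.

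Finally, positivity: $\mu_\delta>0$ (otherwise $E(\eta_t^2\mid a_t=1)=1$ would fail), $0<\int_0^1 g<1$ (from $0<g<1$ on $(0,1]$), $\int_0^1 g^2>(\int_0^1 g)^2$ by Cauchy--Schwarz since $g$ is non-constant, and $\mu_{2\delta}\int_0^1 g>\mu_\delta^2(\int_0^1 g)^2$ (Jensen gives $\mu_{2\delta}\ge\mu_\delta^2$, and $\int_0^1 g>(\int_0^1 g)^2$). Hence both limiting (auto)covariances are strictly positive, so $\hat\rho_s^{(\delta)}(h)=\hat\gamma_s^{(\delta)}(h)/\hat\gamma_s^{(\delta)}(0)\to \mu_\delta^2\big(\int_0^1 g^2-(\int_0^1 g)^2\big)\big/\big(\mu_{2\delta}\int_0^1 g-\mu_\delta^2(\int_0^1 g)^2\big)>0$, the same value for all $h=1,\dots,m$; this common value, times $(1,\dots,1)'\in\mathbb R^m$, is $C_{0,a}$. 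The only genuine obstacles are the $h=0$ strong law under a mere $2\delta$-th moment (handled by the truncation above) and the Riemann-sum bookkeeping at the jumps of $g$; the "all components equal" assertion then comes for free because the limiting autocovariance is independent of the lag $h$.
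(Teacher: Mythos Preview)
Your argument is correct and follows essentially the same route as the paper: Kolmogorov-type SLLNs for the three averages, Riemann-sum limits for the deterministic parts, and Cauchy--Schwarz for strict positivity. Your handling of the cross-products via $h$-dependence is equivalent to the paper's device of splitting $(|r_tr_{t-h}|^\delta)_t$ into $h+1$ independent subsequences. The one place where you go beyond the paper is the $h=0$ term: the paper simply writes ``by similar arguments'' for $n^{-1}\sum|r_t|^{2\delta}$, whereas you correctly observe that the variance form of Kolmogorov's criterion is not directly available under the sole hypothesis $E(|\eta_t|^{2\delta})<\infty$ and supply the standard truncation (which works here because the $X_t^2$ are stochastically dominated by a single integrable law, since the conditional distribution of $|\eta_t|^{2\delta}$ given $a_t=1$ does not depend on $t$). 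This is a genuine refinement of the exposition rather than a different approach.
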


In view of Proposition \ref{propostu2}, in general we have $S_s^m=O_p(n)$ when $P(a_t=1)$ is time-varying, although $(r_t)$ is independent with constant $\sigma_t$ (\emph{i.e.}, no short-run or long memory volatility effects). Indeed, from \eqref{unique}, the unique case where $C_{0,a} =0$ is when $\int_{0}^{1}g^2(s)ds=\left(\int_{0}^{1}g(s)ds\right)^2$ (\emph{i.e.}, when $g(\cdot)$ is constant almost everywhere, which is not allowed by Assumption \ref{tv-prob}). As a consequence, the usual test for short-run volatility dynamics, could spuriously reject $H_0^{(\delta)}$. More specifically, the $S_s^m$ test is not able to distinguish between the time-varying probability of observing a daily price change, and the volatility clustering. Also, examining the autocorrelogram for large $h$, the $\hat{\rho}_s^{(\delta)} (h)$ will tend to spuriously suggest long memory effects for $\sigma_t$ (e.g. IGARCH or LARCH effects).

\subsection{Time-varying zero return probability and variance}\label{sec:siga}

As pointed out in the Introduction, several facts can be behind a non-constant $P(a_t=1)$. Intuitively, it is sometimes reasonable to think that such probability evolutions, may be accompanied by structural changes in the unconditional variance. For instance, the quick development of emerging markets can lead to a gradual increase of both the liquidity and the unconditional variance of companies returns. Also, a specific event in a given company returns history, may affect the unconditional variance together with its liquidity. Hence, the following assumption is considered.

\begin{assumption}[Time-varying deterministic variance]\label{tv-variance} The $\sigma_t$'s are given by the relationship $\sigma_t = v(t/n)$ where
$v(\cdot)$ is a deterministic function on the interval $(0,1]$, satisfying a piecewise Lipschitz condition on  $(0,1]$, and such that $0< \inf_{r\in(0,1]}v(r)\leq \sup_{r\in(0,1]}v(r)<\infty$.
\end{assumption}

When $P(a_t=1)=1$, Mikosch and St\u{a}ric\u{a} (2004) and Patilea and Ra\"{\i}ssi (2014) underlined that a non-constant $\sigma_t$ as in Assumption \ref{tv-variance}, may spuriously generate significant standard empirical powers correlations (even for large lags $h$). The following result extends the above finding to the case $0<P(a_t=1)<1$.

\begin{proposition}\label{propostu2v}
Let $\delta >0$ and suppose that the Assumptions \ref{ident0} to \ref{tv-variance} hold true with $E\left(|\eta_{t}|^{2\delta}\right)<\infty$.
Then,  for any integer $m\geq 1$, $\widehat{\Gamma}_s^{(\delta)}(m)\stackrel{a.s.}{\longrightarrow}C_{0,\sigma}\in \mathbb R^m$. If $v^\delta (\cdot)g(\cdot)$ is a constant function, then $C_{0,\sigma}$ is the null vector, otherwise all the components of the vector $C_{0,\sigma} $ are equal and strictly positive.
\end{proposition}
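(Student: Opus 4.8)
The plan is to run the argument of Proposition~\ref{propostu2}, with the deterministic envelope $g(\cdot)$ replaced by $w(\cdot):=v(\cdot)^{\delta}g(\cdot)$. Set $\mu_{\delta}:=E(|\eta_t|^{\delta}\mid a_t=1)$ and $\mu_{2\delta}:=E(|\eta_t|^{2\delta}\mid a_t=1)$; these do not depend on $t$ by Assumption~\ref{cst-mom}, are finite because $E|\eta_t|^{2\delta}<\infty$, and satisfy $\mu_{\delta}>0$ since $E(\eta_t^{2}\mid a_t=1)=1$ rules out $\eta_t\equiv 0$ on $\{a_t=1\}$. Because $a_t\in\{0,1\}$, $|r_t|^{\delta}=a_t\,v(t/n)^{\delta}|\eta_t|^{\delta}$ and $|r_t|^{2\delta}=a_t\,v(t/n)^{2\delta}|\eta_t|^{2\delta}$; and since $v$ is bounded away from $0$ and $\infty$ and $x\mapsto x^{\delta}$ is Lipschitz on compacts of $(0,\infty)$, $v^{\delta}$ and hence $w$ are bounded and piecewise Lipschitz. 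Using Assumptions~\ref{ident0}, \ref{ident} and \ref{cst-mom}, together with the independence of $(\eta_t,a_t)$ and $(\eta_{t-h},a_{t-h})$ for $h\ge 1$, I would record $E|r_t|^{\delta}=\mu_{\delta}w(t/n)$, $E|r_t|^{2\delta}=\mu_{2\delta}v(t/n)^{2\delta}g(t/n)$, and $E(|r_t|^{\delta}|r_{t-h}|^{\delta})=\mu_{\delta}^{2}w(t/n)w((t-h)/n)$ (the lag-$0$ cross term is the only one needing a moment beyond $2\delta$, and it does not enter $\widehat{\Gamma}_{s}^{(\delta)}(m)$).

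The next step is to pass from these means to integrals and to discard the stochastic fluctuations. The deterministic parts are Riemann sums of bounded piecewise Lipschitz functions, so $n^{-1}\sum_{t}E|r_t|^{\delta}\to\mu_{\delta}\int_{0}^{1}w$, $n^{-1}\sum_{t}E|r_t|^{2\delta}\to\mu_{2\delta}\int_{0}^{1}v^{2\delta}g$ and $n^{-1}\sum_{t>h}E(|r_t|^{\delta}|r_{t-h}|^{\delta})\to\mu_{\delta}^{2}\int_{0}^{1}w^{2}$, the finitely many jumps of $w$ and the shift between $w(t/n)$ and $w((t-h)/n)$ each costing only $O(1/n)$. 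For the fluctuations I would invoke the same almost sure law of large numbers for triangular arrays of row-wise independent variables as in the proof of Proposition~\ref{propostu2}: since the conditional law of $\eta_t$ given $a_t=1$ is fixed, one may realize $|r_t|^{\delta}=v(t/n)^{\delta}a_tX_t$ with $(X_t)$ i.i.d., $EX_t^{2}=\mu_{2\delta}<\infty$, and $(a_t)$ bounded in $[0,1]$ and independent of $(X_s)$, without changing the law of $(r_t)$; truncating $X_t$ at level $K$ then splits each normalized sum into a uniformly bounded part, controlled by Hoeffding's inequality and Borel--Cantelli over $n$ (after splitting the lag-$h$ products into $h+1$ internally independent subsequences), and a tail part whose $\limsup$ is $O\!\big(E(X_1\mathbf{1}_{\{X_1>K\}})\big)\to 0$ as $K\to\infty$ by Kolmogorov's SLLN. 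Together with $\tfrac{n-h}{n}\to 1$ this gives $\bar r^{(\delta)}\stackrel{a.s.}{\longrightarrow}\mu_{\delta}\int_{0}^{1}w$, $\hat\gamma_{s}^{(\delta)}(0)\stackrel{a.s.}{\longrightarrow}V_{0}:=\mu_{2\delta}\int_{0}^{1}v^{2\delta}g-\mu_{\delta}^{2}\big(\int_{0}^{1}w\big)^{2}$, and, for each $h\in\{1,\dots,m\}$, $\hat\gamma_{s}^{(\delta)}(h)\stackrel{a.s.}{\longrightarrow}N:=\mu_{\delta}^{2}\big[\int_{0}^{1}w^{2}-\big(\int_{0}^{1}w\big)^{2}\big]$, the same value for every $h$; hence $\widehat{\Gamma}_{s}^{(\delta)}(m)\stackrel{a.s.}{\longrightarrow}(N/V_{0})\,\mathbf{1}_{m}=:C_{0,\sigma}$, all of whose components are equal.

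It remains to determine the sign. Jensen's inequality on $\big((0,1),\mathrm{Leb}\big)$ gives $\int_{0}^{1}w^{2}\ge\big(\int_{0}^{1}w\big)^{2}$, with equality iff $w$ is constant almost everywhere; a piecewise Lipschitz function that is constant a.e. is constant on $(0,1]$, so $N\ge 0$ with $N=0$ exactly when $v^{\delta}g$ is a constant function, and $N>0$ otherwise (recall $\mu_{\delta}>0$). For the denominator, $\mu_{2\delta}\ge\mu_{\delta}^{2}$ by Cauchy--Schwarz, so $V_{0}\ge\mu_{\delta}^{2}\big[\int_{0}^{1}v^{2\delta}g-\big(\int_{0}^{1}v^{\delta}g\big)^{2}\big]$, and a second application of Cauchy--Schwarz yields $\big(\int_{0}^{1}v^{\delta}g\big)^{2}=\big(\int_{0}^{1}(v^{\delta}g^{1/2})g^{1/2}\big)^{2}\le\big(\int_{0}^{1}v^{2\delta}g\big)\big(\int_{0}^{1}g\big)<\int_{0}^{1}v^{2\delta}g$, the strict inequality because $0<g<1$ forces $\int_{0}^{1}g<1$ while $\int_{0}^{1}v^{2\delta}g\ge(\inf v)^{2\delta}\int_{0}^{1}g>0$; hence $V_{0}>0$. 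Consequently $C_{0,\sigma}$ is the null vector when $v^{\delta}g$ is a constant function, and otherwise all its components are equal and strictly positive. The genuinely delicate ingredient is the triangular-array SLLN behind the vanishing of the stochastic fluctuations; the rest is Riemann-sum bookkeeping exploiting the piecewise Lipschitz assumptions, plus the two Cauchy--Schwarz estimates just described.
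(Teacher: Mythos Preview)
Your argument is correct and follows the same route as the paper's: identify the almost-sure limits of $\hat\gamma_s^{(\delta)}(h)$ via an SLLN plus Riemann-sum approximation (with $w=v^\delta g$ playing the role that $g$ played in Proposition~\ref{propostu2}), then read off the sign of the common numerator limit $N$ and of the denominator limit $V_0$. Two minor differences worth noting: you justify the SLLN through a coupling-plus-truncation argument rather than citing the Kolmogorov SLLN for non-identically distributed summands directly (your version has the merit of handling $n^{-1}\sum_t |r_t|^{2\delta}$ cleanly under only $E|\eta_t|^{2\delta}<\infty$), and you supply an explicit Cauchy--Schwarz argument exploiting $\int_0^1 g<1$ for the strict positivity of $V_0$, which the paper's proof simply asserts.
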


Note that a constant $0<P(a_t=1)<1$ is excluded from Proposition \ref{propostu2v}, as $g(\cdot)$ is time-varying in Assumption \ref{tv-prob}. In the sake of conciseness, a result for such a situation is not stated, as it leads to a similar output as in Proposition \ref{propostu2v}. In the framework of Proposition \ref{propostu2v}, the classical autocorrelations are doubly misleading for the powers correlations analysis. Firstly, they are unable to distinguish between a non-constant daily trade probability and a volatility clustering, as above. Secondly, it emerges that they are unable to carry out correctly the task they are originally intended for: do not suggest stochastic effects while only structural variance changes are observed.

\section{Corrected statistics}\label{sec:corr}

We now provide robust tools, taking into account non-constant  $P(a_t=1)$ and time-varying volatility. For the moment, we consider empirical serial correlations of powers returns under the unrealistic assumptions that $P(a_t=1)$ is given when $\sigma_t$ is constant, and that  $P(a_t=1)$ and $\sigma_t$ are given in the general case.

\subsection{Time-varying zero return probability case}\label{sec:corr1}

First, we consider empirical serial correlations of powers returns corrected for the time-varying probability $P(a_t=1)$. For any integer $m\geq 1$, consider
\begin{equation}\label{stat-n-std-1}
\widehat{\Gamma}_{ns}^{(\delta)} (m) := \left(\hat{\rho}_{ns}^{(\delta)} (1),\dots,\hat{\rho}_{ns}^{(\delta)} (m)\right)',\:\mbox{ with }\:
\hat{\rho}_{ns}^{(\delta)} (h) := \hat{\gamma}_{ns}^{(\delta)} (h)\hat{\gamma}_{ns}^{(\delta)} (0)^{-1},
\end{equation}
where
$$
\hat{\gamma}_{ns}^{(\delta)} (h)=n^{-1}\sum_{t=1+h}^{n}
\left(|r_t|^\delta-\bar{r}^{(\delta)}\frac{P(a_t=1)}{\bar{a}}\right)\left(|r_{t-h}|^\delta-\bar{r}^{(\delta)}\frac{P(a_{t-h}=1)}{\bar{a}}\right),
$$
and   $\bar{a}=n^{-1}\sum_{t=1}^{n}P(a_t=1) $. The subscript "ns" stands for "non stationary probability". The following proposition gives the asymptotic behavior of $\widehat{\Gamma}_{ns}^{(\delta)}(m)$ when volatility effects are not present.

\begin{proposition}\label{propostu3}
Assume that $\sigma_t>0$ is constant. Suppose that Assumptions \ref{ident0}-\ref{tv-prob} hold true with $E\left(|\eta_{t}|^{4\delta}\right)<\infty$. Then,  for any integer $m\geq 1$, we have
$\sqrt{n}\widehat{\Gamma}_{ns}^{(\delta)}(m)\stackrel{d}{\longrightarrow}\mathcal{N}(0,\varsigma I_m),$ as $n\to\infty$,
where $\varsigma=\tilde{\varsigma}/\dot{\varsigma}$, with $\tilde{\varsigma}$ and $\dot{\varsigma}$ are given in \eqref{sigtilde} and \eqref{sigcheck}.
\end{proposition}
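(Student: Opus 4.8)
The plan is to expand each corrected autocovariance $\hat\gamma_{ns}^{(\delta)}(h)$, $h\ge 1$, into a ``clean'' quadratic form in independent centered variables plus negligible remainders produced by the plug‑in centering $\bar r^{(\delta)}g_t/\bar a$, then to prove a joint martingale central limit theorem for the clean parts at lags $h=1,\dots,m$, and finally to divide by $\hat\gamma_{ns}^{(\delta)}(0)$, which converges almost surely to a positive constant, via Slutsky's lemma. For the setup, note that since $\sigma_t\equiv\sigma$ and $a_t\in\{0,1\}$ one has $|r_t|^\delta=a_t\sigma^\delta|\eta_t|^\delta$; write $g_t=g(t/n)=P(a_t=1)$ and, by Assumption \ref{cst-mom}, $\mu_{k\delta}:=E(|\eta_t|^{k\delta}\mid a_t=1)$, which is free of $t$. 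Then $E(|r_t|^\delta)=g_t\sigma^\delta\mu_\delta$, so $\bar r^{(\delta)}g_t/\bar a$ is precisely an estimator of the time‑varying mean of $|r_t|^\delta$ — this is the mechanism that removes the spurious positive limit of Proposition \ref{propostu2}. Set $c:=\sigma^\delta\mu_\delta$, $Y_t:=|r_t|^\delta-cg_t$, $C:=\bar r^{(\delta)}/\bar a$. By Assumptions \ref{ident0} and \ref{cst-mom} the $Y_t$ are independent, centered, with fourth moments bounded uniformly in $t$ (because $E|\eta_t|^{4\delta}<\infty$). Kolmogorov's SLLN for independent summands together with Riemann‑sum approximations $n^{-1}\sum_t f(t/n)\to\int_0^1 f$ valid for piecewise Lipschitz $f$ (Assumption \ref{tv-prob}) give $\bar a\to\int_0^1 g$, $\bar r^{(\delta)}\to c\int_0^1 g$, hence $C\stackrel{a.s.}{\to}c$; and $\sqrt n(C-c)=(n^{-1/2}\sum_t Y_t)/\bar a=O_p(1)$ by the Lyapunov CLT.

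Reduction. Using $|r_t|^\delta-Cg_t=Y_t-(C-c)g_t$,
\begin{align*}
\hat\gamma_{ns}^{(\delta)}(h)&=n^{-1}\sum_{t=1+h}^n Y_tY_{t-h}-(C-c)\,n^{-1}\sum_{t=1+h}^n\bigl(g_tY_{t-h}+g_{t-h}Y_t\bigr)\\
&\quad+(C-c)^2\,n^{-1}\sum_{t=1+h}^n g_tg_{t-h}.
\end{align*}
Since $C-c=O_p(n^{-1/2})$, while $n^{-1}\sum_t(g_tY_{t-h}+g_{t-h}Y_t)=O_p(n^{-1/2})$ (centered, variance $O(n^{-1})$ as $Y_t,Y_{t-h}$ are independent) and $n^{-1}\sum_t g_tg_{t-h}=O(1)$, the last two terms are $O_p(n^{-1})$, so $\sqrt n\,\hat\gamma_{ns}^{(\delta)}(h)=n^{-1/2}\sum_{t=1+h}^n Y_tY_{t-h}+o_p(1)$ for $h=1,\dots,m$. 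The same expansion at $h=0$ with the SLLN gives $\hat\gamma_{ns}^{(\delta)}(0)\stackrel{a.s.}{\to}\ell_0:=\sigma^{2\delta}\bigl(\mu_{2\delta}\int_0^1 g-\mu_\delta^2\int_0^1 g^2\bigr)$, which is strictly positive because $g<1$, $\mu_{2\delta}\ge\mu_\delta^2$ and $g$ is non‑constant; $\ell_0^2$ is $\dot\varsigma$ of \eqref{sigcheck} (up to normalization).

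Martingale CLT. Put $Z_t:=(Y_tY_{t-1},\dots,Y_tY_{t-m})'$ for $t>m$ and $\mathcal F_t:=\sigma(\eta_s,a_s:s\le t)$. Since $Y_t$ is independent of $\mathcal F_{t-1}$ with $E(Y_t)=0$ and $Y_{t-1},\dots,Y_{t-m}$ are $\mathcal F_{t-1}$‑measurable, $(Z_t,\mathcal F_t)$ is a vector martingale difference sequence. For the conditional‑variance condition, $E(Z_tZ_t'\mid\mathcal F_{t-1})$ has $(h,k)$ entry $\tau_tY_{t-h}Y_{t-k}$ with $\tau_t:=\mathrm{Var}(|r_t|^\delta)=\sigma^{2\delta}w(t/n)$, $w(u)=g(u)\mu_{2\delta}-g(u)^2\mu_\delta^2$; summing and scaling, the diagonal terms $n^{-1}\sum_t\tau_tY_{t-h}^2$ converge (SLLN, together with $|t/n-(t-h)/n|\to0$ absorbed in the Riemann‑sum bound) to the common limit $\tilde\varsigma:=\sigma^{4\delta}\int_0^1 w^2$ of \eqref{sigtilde}, while the off‑diagonal terms $n^{-1}\sum_t\tau_tY_{t-h}Y_{t-k}$, $h\ne k$, tend to $0$ in probability (centered, finitely dependent, bounded variances). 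Hence $n^{-1}\sum_{t>m}E(Z_tZ_t'\mid\mathcal F_{t-1})\stackrel{p}{\to}\tilde\varsigma I_m$. For the conditional Lindeberg condition I would use a Lyapunov bound of order $4$: $\|Z_t\|^4=Y_t^4(\sum_h Y_{t-h}^2)^2$ has $\sup_{t,n}E\|Z_t\|^4<\infty$ exactly because $E|\eta_t|^{4\delta}<\infty$, so $n^{-2}\sum_{t>m}E\|Z_t\|^4=O(n^{-1})\to0$, which bounds the conditional Lindeberg sum in $L^1$. The multivariate martingale CLT (via Cramér–Wold) then gives $n^{-1/2}\sum_{t>m}Z_t\stackrel{d}{\to}\mathcal N(0,\tilde\varsigma I_m)$, and adding back the $O(m)$ initial terms (of order $n^{-1/2}$) yields $\bigl(\sqrt n\,\hat\gamma_{ns}^{(\delta)}(1),\dots,\sqrt n\,\hat\gamma_{ns}^{(\delta)}(m)\bigr)'\stackrel{d}{\to}\mathcal N(0,\tilde\varsigma I_m)$.

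Conclusion and main obstacle. Since $\hat\rho_{ns}^{(\delta)}(h)=\hat\gamma_{ns}^{(\delta)}(h)/\hat\gamma_{ns}^{(\delta)}(0)$ and $\hat\gamma_{ns}^{(\delta)}(0)\stackrel{a.s.}{\to}\ell_0$, Slutsky's lemma gives $\sqrt n\,\widehat\Gamma_{ns}^{(\delta)}(m)\stackrel{d}{\to}\mathcal N\bigl(0,(\tilde\varsigma/\ell_0^2)I_m\bigr)=\mathcal N(0,\varsigma I_m)$ with $\varsigma=\tilde\varsigma/\dot\varsigma$, once $\tilde\varsigma,\dot\varsigma$ are identified with \eqref{sigtilde}, \eqref{sigcheck}. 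I expect the main obstacle to be twofold: (a) showing rigorously that the plug‑in centering contributes only $o_p(n^{-1/2})$ at positive lags — delicate because $C-c$ and the $Y_{t-h}$ are correlated, so the argument must go through orders of magnitude rather than exact orthogonality, and it is precisely this cancellation that distinguishes $\widehat\Gamma_{ns}^{(\delta)}$ from the biased $\widehat\Gamma_{s}^{(\delta)}$ of Proposition \ref{propostu2}; and (b) the conditional‑variance computation, where one must carefully track the Riemann‑sum approximations under only piecewise Lipschitz regularity (splitting $(0,1]$ into the $p$ Lipschitz pieces and absorbing $O(p/n)$ boundary effects) and confirm that all off‑diagonal entries vanish while all diagonal entries share the same limit, which is what produces the isotropic covariance $\varsigma I_m$.
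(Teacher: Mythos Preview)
Your proof is correct and follows essentially the same route as the paper: reduce $\hat\gamma_{ns}^{(\delta)}(h)$ to the clean quadratic form $n^{-1/2}\sum Y_tY_{t-h}$ by showing the plug-in centering $\bar r^{(\delta)}g_t/\bar a$ differs from $E(|r_t|^\delta)$ by $g_t\cdot O_p(n^{-1/2})$, then apply a martingale CLT (via Cram\'er--Wold) to the vector $Z_t$ and finish with Slutsky using the a.s.\ limit of $\hat\gamma_{ns}^{(\delta)}(0)$. Your reduction step and Lyapunov bound are actually spelled out more carefully than in the paper (which simply asserts the $o_p(1)$ remainder and the Lindeberg condition), and where the paper invokes Andrews' LLN for $L^1$-mixingales for the conditional-variance convergence you use the more elementary SLLN for independent summands --- a legitimate simplification, since under the assumptions of the proposition the $Y_t$ are independent.
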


From the expressions \eqref{sigtilde} and \eqref{sigcheck}  in the Appendix, we can see that the standard asymptotic normal distribution is retrieved when the non-zero returns probability is constant. However, as opposed to the classical statistic, the correction proposed here does not diverge to infinity, in the presence of a time-varying non-zero returns probability. Proposition \ref{propostu3} indicates  that $\widehat{\Gamma}_{ns}^{(\delta)}(m)$ could be used to test
$H_0^{(\delta)}:\Gamma_0^{(\delta)} (m)=0$. However, in order to build feasible tools for inference, we need to estimate $P(a_t=1)$ pointwise. In Section \ref{sec:corr_adapt}, we show how such estimators and the associated feasible tools could be built.

Let us now investigate two situations where a test of
the hypothesis $H_0^{(\delta)}:\Gamma_0^{(\delta)} (m)=0$ based on a quadratic form of the corrected autocorrelations \eqref{stat-n-std-1}, would reject the null hypothesis. Firstly, we consider the case where $\sigma_t$ is stochastic, but with constant $E(|r_t|^\delta|a_t=1)$, i.e. the unconditional variance is constant. To this aim, an AR(1) structure is considered for ease of demonstration (see Proposition \ref{propostu4} below). More precisely, let

\begin{equation}\label{error_vol}
\varepsilon_t=\frac{\eta_t}{E(|\eta_t|^\delta)^{1/\delta}},
\quad\mbox{and}\quad
\tilde{\sigma}_t=\sigma_t\times\left(E(|\eta_t|^\delta)^{1/\delta}\right),
\end{equation}
so that $(\varepsilon_t)$ is iid, $E(\varepsilon_t)=0$, $E(|\varepsilon_t|^\delta)=1$. Note that $\varepsilon_t$ and $\tilde{\sigma}_t$ are just obtained by rescaling $\eta_t$ and $\sigma_t$. We suppose that $E(|\varepsilon_t|^{2\delta})<\infty$. 
The stochastic volatility $\tilde{\sigma}_t$ follows a power ARCH(1) model
\begin{equation}\label{vol_vol}
\tilde{\sigma}_t^\delta=c+\theta|\tilde{\sigma}_{t-1}\varepsilon_{t-1}|^\delta,
\end{equation}
with $0<\theta<1$ and $c>0$ (see Francq and Zako\"{\i}an (2019), Definition 4.5).
Recall that $\mathcal{F}_{t-1}=\sigma\{r_{l}:l<t\}$ is the $\sigma$-field generated by the past values of $r_t$. Since we have $E(|\varepsilon_t|^\delta)=1$, note that from \eqref{vol_vol} we may write $|\widetilde{r}_t|^\delta=c+\theta|\widetilde{ r}_{t-1}|^\delta+e_t$, where we recall that $r_t=a_t\widetilde r_t$, and $(e_t)$ is such that $E(e_t|\mathcal{F}_{t-1})=0$.

\begin{proposition}\label{propostu4}
Suppose that Assumption \ref{tv-prob} and \eqref{vol_vol}
hold true. Assume that $(a_t)$ and $(\widetilde{r}_t)$ are independent. Then, for any integer $m\geq 1$,  $\widehat{\Gamma}_{ns}^{(\delta)}(m)\stackrel{p}{\longrightarrow}C_1,$ where $C_1=(\theta,\theta^2,\dots,\theta^m)'$.
\end{proposition}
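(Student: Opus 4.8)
The plan is to pass from the observed $|r_t|^\delta$ to the latent power process $X_t:=|\widetilde r_t|^\delta$, exploit the weak AR(1) structure of $(X_t)$ recorded just above the statement, and then control the bilinear averages defining $\hat\gamma_{ns}^{(\delta)}(h)$ by a rescaling law of large numbers; the normalising constant will cancel out because the correction $\bar r^{(\delta)}P(a_t=1)/\bar a$ kills exactly the drift induced by the time-varying $g$, so that the ratio is governed purely by the AR(1) autocovariances.

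\textbf{Step 1 (reduction to the latent power process).} Since $a_t\in\{0,1\}$, one has $|r_t|^\delta=a_tX_t$ with $X_t:=|\widetilde r_t|^\delta$. From \eqref{vol_vol} and $E(|\varepsilon_t|^\delta)=1$ we have $X_t=c+\theta X_{t-1}+e_t$ with $E(e_t\mid\mathcal F_{t-1})=0$; since $0<\theta<1$ this recursion admits a unique strictly stationary and ergodic solution (Francq and Zako\"{\i}an (2019)), with $\mu:=E(X_t)=c/(1-\theta)$, and, because $e_t$ is uncorrelated with $X_{t-h}$ for $h\geq1$, the autocovariances satisfy $\gamma_X(h)=\theta\gamma_X(h-1)$, hence $\gamma_X(h)=\theta^{h}\gamma_X(0)$ for all $h\geq0$; equivalently $E(X_tX_{t-h})=\mu^2+\theta^{h}\gamma_X(0)$. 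This geometric decay is what will ultimately produce $C_1=(\theta,\dots,\theta^m)'$.

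\textbf{Step 2 (rescaling LLN and preliminary limits).} Write $p_t=P(a_t=1)=g(t/n)$ and realise $a_t=\mathbf 1\{U_t\leq g(t/n)\}$ with $(U_t)$ iid uniform, independent of $(\widetilde r_t)$ by hypothesis; by Assumption \ref{ident0} the $(a_t)$ are moreover independent across $t$. The tool I would use is a Dahlhaus-type weighted law of large numbers: for a stationary ergodic sequence $(\zeta_t)$ (here built from $(U_t,\widetilde r_t)$), a measurable $\psi$ with $E|\psi(\zeta_t,\dots,\zeta_{t-h})|<\infty$, and a piecewise Lipschitz weight $\phi$, one has $n^{-1}\sum_t\phi(t/n)\,\psi(\zeta_t,\dots,\zeta_{t-h})\stackrel{p}{\longrightarrow}\big(\int_0^1\phi\big)\,E\psi$, which follows from blocking, Riemann-sum approximation of $\phi$ (legitimate under Assumption \ref{tv-prob}) and the ergodic theorem, with a truncation of $X_t$ inserted whenever only $E|\varepsilon_t|^{2\delta}<\infty$ is available. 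Applying it gives $\bar a=n^{-1}\sum_t p_t\to\bar g:=\int_0^1 g$ and $\bar r^{(\delta)}=n^{-1}\sum_t a_tX_t\stackrel{p}{\longrightarrow}\mu\bar g$, hence $\hat\mu:=\bar r^{(\delta)}/\bar a\stackrel{p}{\longrightarrow}\mu$.

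\textbf{Step 3 (limit of $\hat\gamma_{ns}^{(\delta)}(h)$ and of the ratio).} I would expand $\hat\gamma_{ns}^{(\delta)}(h)=n^{-1}\sum_{t=1+h}^{n}a_ta_{t-h}X_tX_{t-h}-\hat\mu\,n^{-1}\sum(a_tX_tp_{t-h}+a_{t-h}X_{t-h}p_t)+\hat\mu^2\,n^{-1}\sum p_tp_{t-h}$ and evaluate each of the three pieces by the weighted LLN of Step 2, using the independence of $(a_t)$ from $(\widetilde r_t)$ (so conditional expectations factorise, e.g.\ $E[a_ta_{t-h}X_tX_{t-h}]=p_tp_{t-h}E(X_tX_{t-h})$), the moment identities of Step 1, and $\hat\mu\stackrel{p}{\longrightarrow}\mu$. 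All terms carrying only the mean $\mu$ — the part spuriously induced by the time-varying $g$ — then cancel, leaving for $h\geq1$ a limit governed by $\theta^{h}\gamma_X(0)$; the same expansion at $h=0$ delivers the limit of $\hat\gamma_{ns}^{(\delta)}(0)$. Forming $\hat\rho_{ns}^{(\delta)}(h)=\hat\gamma_{ns}^{(\delta)}(h)\,\hat\gamma_{ns}^{(\delta)}(0)^{-1}$ and stacking $h=1,\dots,m$ yields $\widehat\Gamma_{ns}^{(\delta)}(m)\stackrel{p}{\longrightarrow}(\theta,\dots,\theta^m)'=C_1$. The main obstacle is precisely the weighted LLN for the \emph{bilinear} averages $n^{-1}\sum_t a_ta_{t-h}X_tX_{t-h}$ and $n^{-1}\sum_t a_tX_t^2$: one must bound the variance of the centred sums, which mixes the $(2h)$-dependence of the products of $a_t$'s (conditionally on $(\widetilde r_t)$) with the geometric $\beta$-mixing of the power ARCH, and — under the weakest stated moment assumption — handle the possibly infinite $E(X_t^2)$ via an asymptotic-negligibility/truncation argument; once this is in place the remainder is the bookkeeping of the cancellations.
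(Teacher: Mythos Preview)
Your reduction to the latent power process $X_t=|\widetilde r_t|^\delta$, the weighted (Dahlhaus-type) LLN device, and the three-term bilinear expansion of $\hat\gamma_{ns}^{(\delta)}(h)$ are all sensible and close in spirit to the paper's argument, which uses Andrews' (1988) $L^1$-mixingale LLN in place of your blocking/ergodic-theorem step.

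The concrete gap is in your final step. Carry your own expansion through. For $h\geq 1$ the three pieces yield, after the cancellations you announce,
\[
\hat\gamma_{ns}^{(\delta)}(h)\ \stackrel{p}{\longrightarrow}\ \theta^{h}\gamma_X(0)\int_0^1 g^2(s)\,ds .
\]
But ``the same expansion at $h=0$'' is \emph{not} the same: since $a_ta_{t-0}=a_t^2=a_t$, the leading term becomes $n^{-1}\sum_t a_tX_t^2\stackrel{p}{\longrightarrow} E(X_t^2)\int_0^1 g(s)\,ds$, with a \emph{single} factor of $g$, and one gets
\[
\hat\gamma_{ns}^{(\delta)}(0)\ \stackrel{p}{\longrightarrow}\ E(X_t^2)\int_0^1 g(s)\,ds-\mu^2\int_0^1 g^2(s)\,ds .
\]
Hence the ratio $\hat\rho_{ns}^{(\delta)}(h)$ converges to $\theta^{h}$ multiplied by
$\gamma_X(0)\int g^2\big/\big[E(X_t^2)\int g-\mu^2\int g^2\big]$, which is strictly less than $1$ whenever $0<g<1$ on a set of positive measure (because then $\int g^2<\int g$). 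Your assertion that ``the normalising constant will cancel out'' therefore fails, and the argument as written does not deliver $C_1=(\theta,\dots,\theta^m)'$.

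The paper reaches the exact $\theta^{h}$ differently: after replacing $\bar r^{(\delta)}P(a_t=1)/\bar a$ by $E(|r_t|^\delta)$, it invokes ``the AR structure of $(|r_t|^\delta)$'' to factor $\mathrm{Cov}(|r_t|^\delta,|r_{t-h}|^\delta)$ directly as $\theta^{h}\mathrm{Var}(|r_{t-h}|^\delta)$, which forces the same bracket $E(|r_t|^{2\delta}\mid a_t=1)\int g-E(|r_t|^{\delta}\mid a_t=1)^2\int g^2$ to appear in both numerator and denominator. If you want to match the stated $C_1$, you need either to justify that covariance--variance identity at the \emph{observed} level (note the AR(1) recursion holds for $X_t$, not for $a_tX_t$, so this step is not immediate once the $a_t$'s are inserted) or to reconcile your explicit three-term limits with the claimed ratio.
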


Proposition \ref{propostu4} shows that a test based on $\widehat{\Gamma}_{ns}^{(\delta)}(m)$ would reject
$H_0^{(\delta)}:\Gamma_0^{(\delta)} (m)=0$ when \eqref{vol_vol} happens with $\theta>0$. This is exactly what one could expect. Note that we make an independence assumption between $(a_t)$ and $(\widetilde{r}_t)$ to simplify the proof of Proposition \ref{propostu4}. In contrast to this case, we consider a situation which would lead to the spurious rejection of the null hypothesis $H_0^{(\delta)}$ when the test is based on $\widehat{\Gamma}_{ns}^{(\delta)}(m)$. This situation is that of  a deterministic specification of the long-run volatility effects. See e.g. Mikosch and St\u{a}ric\u{a} (2004) for this kind of specification. See also Section \ref{sec:siga} above.

\begin{proposition}\label{propostu5}
Suppose that Assumption \ref{ident0}-\ref{tv-prob} hold true with $E(|\eta_t|^{2\delta})<\infty$. Assume that $(\widetilde r_t)$ fulfills \eqref{returns}, where the $\sigma_{t}$'s are given by $\sigma_{t}=v(t/n)$ from Assumption \ref{tv-variance}.
Then, for any integer $m\geq 1$, $\widehat{\Gamma}_{ns}^{(\delta)} (m)\stackrel{a.s.}{\longrightarrow}C_{1,\sigma},$ where $C_{1,\sigma}$ is a vector with all the components  equal and strictly positive, except when both $P(a_t=1)$ and $\sigma_t$ are constant, in which case the components of $C_{1,\sigma}$ are all equal to zero.
\end{proposition}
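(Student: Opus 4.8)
The plan is to reduce the whole statement to strong laws of large numbers for row-wise independent triangular arrays together with Riemann-sum approximations, exactly in the spirit of Proposition~\ref{propostu2v}, but now applied to the \emph{centred} summands $|r_t|^\delta-\bar r^{(\delta)}P(a_t=1)/\bar a$ entering $\hat\gamma_{ns}^{(\delta)}(\cdot)$. First I would record the identity $|r_t|^\delta=a_t\,v(t/n)^\delta|\eta_t|^\delta$ (using $a_t^\delta=a_t$), introduce $\mu_j:=E(|\eta_1|^{j}\mid a_1=1)$ for $j\in\{\delta,2\delta\}$ -- which does not depend on $t$ by Assumption~\ref{cst-mom}, is finite because $E(|\eta_t|^{2\delta})<\infty$, and is strictly positive because $E(\eta_1^2\mid a_1=1)=1$ -- and note three facts used throughout: by Assumption~\ref{ident0}, $|r_t|^\delta$ and $|r_{t-h}|^\delta$ are independent whenever $h\geq1$; $E(|r_t|^\delta)=\mu_\delta v(t/n)^\delta g(t/n)$ and $E(|r_t|^{2\delta})=\mu_{2\delta} v(t/n)^{2\delta} g(t/n)$; and, for every $t$, $a_t|\eta_t|^{2\delta}$ is stochastically dominated by one and the same random variable -- distributed as $|\eta_t|^{2\delta}$ conditional on $a_t=1$ -- with finite mean $\mu_{2\delta}$. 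This last fact is what will make the weak moment hypothesis $E(|\eta_t|^{2\delta})<\infty$ sufficient.

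Next I would pin down the almost sure limits of the building blocks. Using Kolmogorov's strong law in its truncation-criterion form -- so that the stochastic-domination fact supplies what the possibly infinite fourth moment of $|\eta_t|^\delta$ cannot -- together with $n^{-1}\sum_t f(t/n)\to\int_0^1 f$ and $n^{-1}\sum_t|f((t-h)/n)-f(t/n)|\to0$ for fixed $h$ and any bounded piecewise Lipschitz $f$ (Assumptions~\ref{tv-prob} and~\ref{tv-variance}), I get $\bar r^{(\delta)}\to\mathcal A:=\mu_\delta\int_0^1 v^\delta g$ and $\bar a\to\mathcal B:=\int_0^1 g>0$ almost surely. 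Setting $\mathcal W:=\mathcal A/(\mu_\delta\mathcal B)=\big(\int_0^1 v^\delta g\big)\big(\int_0^1 g\big)^{-1}$, the deterministic centring $\bar r^{(\delta)}P(a_t=1)/\bar a$ converges to $\mu_\delta\mathcal W g(t/n)$, so $|r_t|^\delta-\bar r^{(\delta)}P(a_t=1)/\bar a$ equals, up to uniformly $o(1)$ terms, $\xi_t+b(t/n)$ with $\xi_t:=|r_t|^\delta-E(|r_t|^\delta)$ centred with variance $\varrho(t/n)$, where
\[
b(u):=\mu_\delta\,g(u)\big(v(u)^\delta-\mathcal W\big),\qquad
\varrho(u):=v(u)^{2\delta}g(u)\big(\mu_{2\delta}-\mu_\delta^2 g(u)\big).
\]
The crux is exactly this: the ``ns'' correction removes the $g$-induced non-stationarity of the mean but not the $v$-induced one, leaving the residual deterministic bias $b(\cdot)$, which is identically zero iff $v^\delta$ is constant.

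Then I would identify the limit. For $h\geq1$, expand $(\xi_t+b(t/n))(\xi_{t-h}+b((t-h)/n))$; since $\xi_t,\xi_{t-h}$ are independent and mean zero, the three cross terms and $\xi_t\xi_{t-h}$ average to $0$ almost surely (boundary indices contributing $O(h/n)$), so
\[
\hat\gamma_{ns}^{(\delta)}(h)\ \stackrel{a.s.}{\longrightarrow}\ \int_0^1 b(u)^2\,du\ =\ \mu_\delta^2\int_0^1 g(u)^2\big(v(u)^\delta-\mathcal W\big)^2\,du\ =:\ L,
\]
which does not depend on $h$. For $h=0$ one gets, similarly, $n^{-1}\sum_t\xi_t^2\to\int_0^1\varrho$ (truncation argument again), $n^{-1}\sum_t\xi_t b(t/n)\to0$ and $n^{-1}\sum_t b(t/n)^2\to\int_0^1 b^2$, whence $\hat\gamma_{ns}^{(\delta)}(0)\to L_0:=L+\int_0^1 v(u)^{2\delta}g(u)\big(\mu_{2\delta}-\mu_\delta^2 g(u)\big)\,du$. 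Because $0<g<1$ on $(0,1]$ and $\mu_{2\delta}\geq\mu_\delta^2$ by Jensen's inequality, the added integrand is strictly positive, so $L_0>0$; therefore $\hat\rho_{ns}^{(\delta)}(h)=\hat\gamma_{ns}^{(\delta)}(h)/\hat\gamma_{ns}^{(\delta)}(0)\to L/L_0$ for every $h\in\{1,\dots,m\}$, i.e. $\widehat\Gamma_{ns}^{(\delta)}(m)\to C_{1,\sigma}=(L/L_0)(1,\dots,1)'$, a vector with all components equal. Finally $L/L_0\geq0$ and it vanishes if and only if $v(\cdot)^\delta$, equivalently $\sigma_t$, is constant -- in particular it is the null vector when both $\sigma_t$ and $P(a_t=1)$ are constant -- while in every other case all its components are equal and strictly positive, which proves the claim.

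The hard part will be the \emph{almost sure} (not merely in probability) convergence of $\hat\gamma_{ns}^{(\delta)}(0)$, i.e. of $n^{-1}\sum_t|r_t|^{2\delta}$ and of $n^{-1}\sum_t\xi_t^2$, under only $E(|\eta_t|^{2\delta})<\infty$: a naive second-moment strong law would demand $E(|\eta_t|^{4\delta})<\infty$, so one must instead go through Kolmogorov's truncation criterion and exploit the uniform stochastic domination of $a_t|\eta_t|^{2\delta}$ by one integrable variable to check $\sum_t P(|r_t|^{2\delta}>t)<\infty$ and $\sum_t t^{-2}\,\mathrm{Var}\big(|r_t|^{2\delta}\mathbf{1}_{\{|r_t|^{2\delta}\leq t\}}\big)<\infty$. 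Everything else is routine bookkeeping -- the cross terms, the boundary indices $t\leq h$ and $t$ near $n$, and the replacement of $f((t-h)/n)$ by $f(t/n)$, all $o(1)$ by the piecewise Lipschitz property and the boundedness of $v$ -- and the positivity assertion is the short Jensen argument above.
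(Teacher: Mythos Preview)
Your proof is correct and follows essentially the same route as the paper --- Kolmogorov's SLLN on the building blocks together with Riemann-sum approximations for the piecewise Lipschitz functions $g$ and $v$ --- but your execution is tighter in two respects. First, your decomposition into $\xi_t + b(t/n)$ with $b(u) = \mu_\delta\, g(u)\big(v(u)^\delta - \mathcal W\big)$ identifies the limit of $\hat\gamma_{ns}^{(\delta)}(h)$ directly as $L = \mu_\delta^{\,2}\int_0^1 g^2(v^\delta - \mathcal W)^2$, whose nonnegativity and equality case ($L=0$ iff $v$ is constant, since $g>0$ on $(0,1]$) are immediate; the paper instead writes out the three-term expansion and bounds it from below via a Jensen step, landing on the assertion that the limit vanishes iff \emph{both} $g$ and $v$ are constant. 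Your sharper condition is the right one and is in fact consistent with Proposition~\ref{propostu3}. Second, you rightly flag that the almost-sure convergence of $n^{-1}\sum_t|r_t|^{2\delta}$ (hence of $\hat\gamma_{ns}^{(\delta)}(0)$) under only $E(|\eta_t|^{2\delta})<\infty$ requires Kolmogorov's truncation criterion together with the uniform stochastic domination you note; at the analogous step the paper invokes Andrews's (1988) mixingale LLN and only records $\stackrel{p}{\longrightarrow}$, which is weaker than the $\stackrel{a.s.}{\longrightarrow}$ claimed in the statement.
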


In the next section, we present a more general correction which could be applied to avoid the wrong conclusion that the latent powers returns of $(\widetilde r_t)$ are autocorrelated when either $P(a_t=1)$, or deterministic $\sigma_t$, or both are time-varying.


\subsection{Time-varying zero return probability and variance}\label{sec:corr2}

In the presence of a time-varying variance, Patilea and Ra\"{\i}ssi (2014) proposed a modified portmanteau test for second-order dynamics. 
In the following, we underline that such an approach may be extended to the case where both  $P(a_t=1)$ and $\sigma_t$ could vary over time. 
Consider
\begin{equation}\label{stat-n-std-tau}
\widehat{\Gamma}_{ns,\sigma}^{(\delta)} (m) := \left(\hat{\rho}_{ns,\sigma}^{(\delta)} (1),\dots,\hat{\rho}_{ns,\sigma}^{(\delta)} (m)\right)',\:\mbox{ with }\:
\hat{\rho}_{ns,\sigma}^{(\delta)} (h) := \hat{\gamma}_{ns,\sigma}^{(\delta)} (h)\hat{\gamma}_{ns,\sigma}^{(\delta)} (0)^{-1},
\end{equation}
where
$$
\hat{\gamma}_{ns,\sigma}^{(\delta)} (h)=n^{-1}\sum_{t=1+h}^{n}
\left\{|r_t|^\delta-E\left(|r_t|^\delta\right)   \right\}\left\{|r_{t-h}|^\delta- E\left(|r_t|^\delta\right) \right\}.
$$
It easy to see that $\widehat{\Gamma}_{ns,\sigma}^{(\delta)} (\cdot) = \widehat{\Gamma}_{ns}^{(\delta)} (\cdot)$ when $\sigma_t$ is constant.

\begin{proposition}\label{propostu4v}
 Suppose that Assumption \ref{ident0}-\ref{tv-variance} hold true with $E\left(|\eta_{t}|^{4\delta}\right)<\infty$. Then, for any integer $m\geq 1$,  we have
$\sqrt{n}\widehat{\Gamma}_{ns,\sigma}^{(\delta)}(m)\stackrel{d}{\longrightarrow}\mathcal{N}(0,\zeta I_m),$
where $\zeta=\tilde{\zeta}/\dot{\zeta}$, with $\tilde{\zeta}$ and $\dot{\zeta}$ are given in \eqref{zetanum} and \eqref{zetaden}.
\end{proposition}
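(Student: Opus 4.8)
The plan is to establish a central limit theorem for the vector of empirical autocovariances $\left(\hat\gamma_{ns,\sigma}^{(\delta)}(0),\hat\gamma_{ns,\sigma}^{(\delta)}(1),\dots,\hat\gamma_{ns,\sigma}^{(\delta)}(m)\right)'$ after centering by $E\left(|r_t|^\delta\right)$, and then to transfer it to the normalized autocorrelations by the delta method. Since, by Assumptions \ref{ident0}, \ref{cst-mom}, the pairs $(\eta_t,a_t)$ are independent across $t$, the summands $X_{t,h}:=\left\{|r_t|^\delta-E(|r_t|^\delta)\right\}\left\{|r_{t-h}|^\delta-E(|r_{t-h}|^\delta)\right\}$ form, for each fixed $h$, a triangular array of row-wise $h$-dependent random variables (they depend only on $(\eta_t,a_t)$ and $(\eta_{t-h},a_{t-h})$), which are centered because $E(|r_t|^\delta|a_t=1)$ does not depend on $t$ (Assumption \ref{cst-mom}) and $\eta_t\perp\eta_{t-h}$: indeed $E\left(X_{t,h}\right)=E\left(|r_t|^\delta-E|r_t|^\delta\right)E\left(|r_{t-h}|^\delta-E|r_{t-h}|^\delta\right)=0$ for $h\geq1$. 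First I would write $|r_t|^\delta = a_t v(t/n)^\delta |\eta_t|^\delta$, so that $E(|r_t|^\delta) = v(t/n)^\delta g(t/n) E(|\eta_t|^\delta| a_t=1)$ (using that $a_t$ and $\eta_t$ may be dependent but the conditional law of $\eta_t$ given $a_t=1$ is fixed), and collect the deterministic factors $v(t/n)^\delta$, $g(t/n)$ into Riemann sums.

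Next I would apply a CLT for triangular arrays of $m$-dependent (more precisely, for the joint vector, $\max\{1,\dots,m\}$-dependent) non-identically distributed arrays — e.g. a blocking argument à la Bernstein, or the standard $m$-dependent triangular-array CLT — to obtain that $\sqrt n\left(\hat\gamma_{ns,\sigma}^{(\delta)}(1),\dots,\hat\gamma_{ns,\sigma}^{(\delta)}(m)\right)'$ converges to a centered Gaussian vector. The covariance structure is the crux: for $h\neq h'$ with $h,h'\geq1$ the cross-covariance $n^{-1}\sum_t \mathrm{Cov}(X_{t,h},X_{t',h'})$ vanishes in the limit because the product moments factor through $E(\eta_t|a_t=1)=0$ (Assumption \ref{ident}) whenever some index appears an odd number of times — this is exactly the computation already displayed in the text after Proposition \ref{propostu}, extended to the weighted/truncated setting — so the limiting covariance matrix is diagonal. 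The common diagonal entry, call it $\tilde\zeta$, is the $n\to\infty$ limit of $n^{-1}\sum_{t=1+h}^n \mathrm{Var}(X_{t,h})$; expanding $\mathrm{Var}(X_{t,h})$ one gets terms of the form $E(a_t|\eta_t|^{2\delta})$, $E(a_t|\eta_t|^\delta)^2$, etc., times deterministic factors $v(t/n)$, $g(t/n)$, which converge to integrals $\int_0^1 v(u)^{k\delta}g(u)\,du$ by the piecewise-Lipschitz assumption and dominated convergence; this limit is independent of $h$, which is what produces the scalar $\tilde\zeta$. Similarly $\hat\gamma_{ns,\sigma}^{(\delta)}(0)\xrightarrow{a.s.}\dot\zeta$ for an explicit constant $\dot\zeta>0$, again by the strong law for independent non-identically distributed summands plus Kronecker's lemma. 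The delta method applied to $\hat\rho_{ns,\sigma}^{(\delta)}(h)=\hat\gamma_{ns,\sigma}^{(\delta)}(h)/\hat\gamma_{ns,\sigma}^{(\delta)}(0)$ then gives $\sqrt n\,\widehat\Gamma_{ns,\sigma}^{(\delta)}(m)\xrightarrow{d}\mathcal N(0,(\tilde\zeta/\dot\zeta)I_m)$, with $\zeta=\tilde\zeta/\dot\zeta$ and $\tilde\zeta$, $\dot\zeta$ matching the expressions \eqref{zetanum}, \eqref{zetaden}.

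The main obstacle I anticipate is twofold. First, verifying the Lindeberg (or Lyapunov) condition for the triangular array requires uniform integrability of the $X_{t,h}$, which is why the hypothesis is $E(|\eta_t|^{4\delta})<\infty$ rather than $E(|\eta_t|^{2\delta})<\infty$: $X_{t,h}$ is a product of two $|r|^\delta$-type factors, so $\mathrm{Var}(X_{t,h})$ needs fourth-order moments, and Lyapunov at order $2+\epsilon$ pushes slightly beyond that — one must check this is covered by the stated moment bound, or handle the truncation with a standard argument trimming the array and bounding the remainder by the $4\delta$-moment. Second, and more delicate, is the bookkeeping of the off-diagonal covariance terms: because $a_t$ and $\eta_t$ are allowed to be dependent (only each sequence is independent, Assumption \ref{ident0}), one must be careful that every vanishing cross-term really does contain an isolated $\eta$-factor conditioned on its own $a=1$ event, so that $E(\eta_t|a_t=1)=0$ can be invoked; the clean way is to condition on the entire $a$-sequence first, use independence of the $\eta$'s given the $a$'s, and only then take expectations over the $a$'s. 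The deterministic weights $v(t/n)^\delta$ and $g(t/n)$ ride along as bounded factors throughout and converge to Riemann integrals at the end; the piecewise-Lipschitz condition guarantees the Riemann sums converge despite the finitely many jumps. Everything else is routine.
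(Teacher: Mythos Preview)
Your proposal is correct and follows essentially the same path as the paper's own proof. The paper also sets $Z_{ht}=X_{t,h}$, establishes a joint CLT for $(Z_{1t},\dots,Z_{mt})'$ (invoking the CLT for martingale difference sequences together with the Cram\'er--Wold device, after noting $m$-dependence and checking the Lindeberg condition), observes that the asymptotic covariance is diagonal by independence, and concludes by Slutsky; your use of an $m$-dependent triangular-array CLT in place of the martingale CLT is an inessential variant. One small slip: in the paper's convention, $\dot\zeta$ in \eqref{zetaden} is the \emph{square} of the almost-sure limit of $\hat\gamma_{ns,\sigma}^{(\delta)}(0)$, so that $\zeta=\tilde\zeta/\dot\zeta$ already incorporates the squaring from Slutsky; your line ``$\hat\gamma_{ns,\sigma}^{(\delta)}(0)\to\dot\zeta$'' should read ``$\hat\gamma_{ns,\sigma}^{(\delta)}(0)\to\dot\zeta^{1/2}$'' to match those references.
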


\begin{proposition}\label{propostu4-bis}
Suppose that Assumption \ref{tv-prob} holds true. Suppose that $\widetilde r_t = \tilde{\sigma}_t \varepsilon_t$ where $\varepsilon_t$ and $\tilde{\sigma}_t$ are defined as in  \eqref{error_vol} and \eqref{vol_vol}. Suppose that $(a_t)$ and $(\widetilde{r}_t)$ are independent. Then, for any integer $m\geq 1$,  $\widehat{\Gamma}_{ns,\sigma}^{(\delta)}(m)\stackrel{p}{\longrightarrow}C_1,$ where $C_1=(\theta,\theta^2,\dots,\theta^m)'$.
\end{proposition}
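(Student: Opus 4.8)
The plan is to obtain Proposition~\ref{propostu4-bis} from Proposition~\ref{propostu4} by showing that, under the hypotheses in force, $\widehat{\Gamma}_{ns,\sigma}^{(\delta)}(m)$ and $\widehat{\Gamma}_{ns}^{(\delta)}(m)$ differ by $o_p(1)$. Both $\hat{\gamma}_{ns,\sigma}^{(\delta)}(h)$ and $\hat{\gamma}_{ns}^{(\delta)}(h)$ are of the form $n^{-1}\sum_{t=1+h}^{n}(|r_t|^\delta-c_{n,t})(|r_{t-h}|^\delta-c_{n,t-h})$, the only difference being the centering sequence: $c_{n,t}=E(|r_t|^\delta)$ for the first and $c_{n,t}=\bar r^{(\delta)}P(a_t=1)/\bar a$ for the second. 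Since $(a_t)$ and $(\widetilde r_t)$ are independent and $(\widetilde r_t)$ defined through \eqref{vol_vol} is strictly stationary (as $0<\theta<1$) with $E(|\widetilde r_t|^\delta)=c/(1-\theta)=:\mu_\delta$, one gets $E(|r_t|^\delta)=P(a_t=1)\mu_\delta=g(t/n)\mu_\delta$, so that the two centerings at index $t$ differ by exactly $g(t/n)\bigl(\bar r^{(\delta)}/\bar a-\mu_\delta\bigr)$. As $P(a_t=1)=g(t/n)\le 1$, everything therefore reduces to proving $\bar r^{(\delta)}/\bar a\stackrel{p}{\longrightarrow}\mu_\delta$.

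For this I would first note $\bar a=n^{-1}\sum_{t=1}^n g(t/n)\to\int_0^1 g$ by a Riemann-sum argument, licit since $g$ is piecewise Lipschitz. Then I would establish the law of large numbers $\bar r^{(\delta)}=n^{-1}\sum_{t=1}^n|r_t|^\delta\stackrel{p}{\longrightarrow}\mu_\delta\int_0^1 g$: its expectation is $n^{-1}\sum_t g(t/n)\mu_\delta\to\mu_\delta\int_0^1 g$, and the fluctuation around it is negligible because $(|\widetilde r_t|^\delta)$ is stationary with geometrically decaying autocovariances and $(a_t)$ is an independent sequence independent of $(\widetilde r_t)$, whence for $h\ne 0$, $\mathrm{cov}(|r_t|^\delta,|r_{t-h}|^\delta)=g(t/n)g((t-h)/n)\theta^{|h|}\mathrm{Var}(|\widetilde r_t|^\delta)$, which is summable in $h$ uniformly in $t$ (if $\mathrm{Var}(|\widetilde r_t|^\delta)=\infty$ one truncates $|\widetilde r_t|^\delta$ at a level $K$, runs the same argument on the truncated array, and lets $K\to\infty$ using $E|\widetilde r_t|^\delta<\infty$). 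This is exactly the type of law of large numbers already used in the proof of Proposition~\ref{propostu4}, so it may be invoked rather than redone. Once $\bar r^{(\delta)}/\bar a\stackrel{p}{\longrightarrow}\mu_\delta$ is in hand, I would expand $\hat{\gamma}_{ns,\sigma}^{(\delta)}(h)-\hat{\gamma}_{ns}^{(\delta)}(h)$ into finitely many terms, each of them a product of a $o_p(1)$ factor (a difference of centerings, or its square) with an $O_p(1)$ average of the type $n^{-1}\sum_t|r_t|^\delta$ or an $O(1)$ average $n^{-1}\sum_t g(t/n)$; hence $\hat{\gamma}_{ns,\sigma}^{(\delta)}(h)-\hat{\gamma}_{ns}^{(\delta)}(h)\stackrel{p}{\longrightarrow}0$ for every fixed $h\in\{0,1,\dots,m\}$. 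Since $\hat{\gamma}_{ns}^{(\delta)}(0)$ converges in probability to a strictly positive constant (from the proof of Proposition~\ref{propostu4}), so does $\hat{\gamma}_{ns,\sigma}^{(\delta)}(0)$, and dividing gives $\hat{\rho}_{ns,\sigma}^{(\delta)}(h)-\hat{\rho}_{ns}^{(\delta)}(h)\stackrel{p}{\longrightarrow}0$ for $h=1,\dots,m$. Therefore $\widehat{\Gamma}_{ns,\sigma}^{(\delta)}(m)=\widehat{\Gamma}_{ns}^{(\delta)}(m)+o_p(1)$, and Proposition~\ref{propostu4} yields $\widehat{\Gamma}_{ns,\sigma}^{(\delta)}(m)\stackrel{p}{\longrightarrow}C_1=(\theta,\theta^2,\dots,\theta^m)'$.

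The step I expect to be the main obstacle is the law of large numbers for $\bar r^{(\delta)}$: the array $(|r_t|^\delta)$ is non-stationary, since its mean varies through $g(t/n)$, so no stationary ergodic theorem applies off the shelf, and one must exploit the geometric weak dependence of the power-ARCH(1) process together with the independence of the modulating sequence $(a_t)$, and deal with a possibly infinite second moment of $|\widetilde r_t|^\delta$ by truncation. Everything else is bookkeeping. An alternative to the reduction above would be to reprove Proposition~\ref{propostu4-bis} directly, mimicking the proof of Proposition~\ref{propostu4} with $E(|r_t|^\delta)$ in place of its estimator $\bar r^{(\delta)}P(a_t=1)/\bar a$; this is marginally cleaner because the centering is then exactly known, but longer than exploiting the asymptotic equivalence of the two statistics.
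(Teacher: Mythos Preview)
Your proposal is correct. The paper itself omits the proof, stating only that it is ``similar to that of Proposition~\ref{propostu4},'' which most naturally reads as: repeat the argument of Proposition~\ref{propostu4} with the exact centering $E(|r_t|^\delta)$ in place of its empirical surrogate $\bar r^{(\delta)}P(a_t=1)/\bar a$. You instead reduce Proposition~\ref{propostu4-bis} to Proposition~\ref{propostu4} by showing that the two statistics differ by $o_p(1)$, exploiting that under the stated hypotheses both centerings equal $g(t/n)$ times a quantity converging (in probability, for the empirical one) to $\mu_\delta$. This reduction is clean and entirely valid; it buys you the conclusion without rerunning the mixingale LLN arguments, at the modest price of the bookkeeping you describe. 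You even note the direct-mimicking alternative at the end, which is what the paper presumably has in mind. The only step needing care is indeed the LLN for $\bar r^{(\delta)}$ when $\mathrm{Var}(|\widetilde r_t|^\delta)$ may be infinite, and your truncation remark handles it; alternatively you can simply invoke the $L^1$-mixingale LLN of Andrews (1988), exactly as in the proof of Proposition~\ref{propostu4}, which requires no second moment.
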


Proposition \ref{propostu4v} extends Proposition 3 of Patilea and Ra\"{\i}ssi (2014), in the case of illiquid assets. In view of Proposition \ref{propostu4v}, $\widehat{\Gamma}_{ns,\sigma}^{(\delta)}(m)$ is robust with respect to both time-varying unconditional variance and zero returns probability. Proposition \ref{propostu4-bis} illustrates the ability of $\widehat{\Gamma}_{ns,\sigma}^{(\delta)}(m)$, to detect stochastic effects in the volatility. Nevertheless, when the variance of $\widetilde r_t$ is constant, it can be expected that the $\widehat{\Gamma}_{ns,\sigma}^{(\delta)}(m)$, may suffer from a loss of power when compared with the more specific $\widehat{\Gamma}_{ns}^{(\delta)}(m)$. Indeed, the volatility clustering associated with a serial correlation of powers returns, could in part be confused with a time-varying $E\left(|r_t|^\delta\right)$.

\begin{remark}[Conditional mean filtering]
Sometimes practitioners investigate the serial correlation of $(r_t)$, prior to studying the higher order dynamics of the data. If $corr(r_t,r_{t-h})=0$ is rejected for some $h>0$, then some model is fitted to handle the conditional mean of $(r_t)$. In a second step, the power correlations are assessed, but using the residuals obtained from the conditional mean estimation.  
Nevertheless, note that in many cases, the classical Box test for investigating $corr(r_t,r_{t-h})$, $h>0$ is used, which could spuriously suggest the existence of a conditional mean when the data exhibit non-standard behaviors. Indeed, Romano and Thombs (1996) underlined the inadequacy of the classical Box test in the presence of stationary nonlinear patterns (\emph{i.e.} the so-called GARCH effect) in the observed series. Also, Patilea and Ra\"{\i}ssi (2013) showed that the classical Box test is oversized when the variance is time-varying as in Assumption \ref{tv-variance}. In the same way, Ra\"{\i}ssi (2020) found that the classical Box test tends to reject the no serial correlation for $(r_t)$ erroneously when the zero returns probability is non-constant as in Assumption \ref{tv-prob}. For these reasons, we believe that the power correlation study can be carried out, using the $(r_t)$ directly in most of the cases.

Nevertheless, if someone wants to proceed with a conditional mean, a AR($k$) model of the form

\begin{equation*}
r_t=\vartheta_{00}+\vartheta_{01}r_{t-1}+\dots+\vartheta_{0k}r_{t-k}+u_t,
\end{equation*}
may be adjusted to the returns. The errors $u_t$ are uncorrelated, and $\vartheta_0=(\vartheta_{00},\vartheta_{01},\dots,$ $\vartheta_{0k})'$ is a vector of parameters.  Introducing such a step in a few cases, may be motivated from some evidence from the data (\emph{e.g.} market inefficiency). We define the OLS estimator $\hat{\vartheta}$. In the framework of Assumption \ref{tv-variance}, it is shown that $\sqrt{n}\left(\hat{\vartheta}-\vartheta_0\right)=O_p(1)$ (see Xu and Phillips (2008)). The power empirical correlations are computed using the residuals $\hat{u}_t$ instead of $u_t$. However, taking $\delta=2$, and using similar arguments to those of McLeod and Li (1983), equation (2.3), it can be shown that the conditional mean estimation step can be asymptotically neglected, i.e. the $(u_t)$ may be assumed observed for establishing asymptotic results. 
\end{remark}

\section{Data-driven tools for power series correlation analysis}\label{sec:corr_adapt}

Let us now propose adaptive, data-driven  versions of the statistics \eqref{stat-n-std-1} and \eqref{stat-n-std-tau}. These statistics involve quantities which are usually unknown, that are $P(a_t=1)$ and $\sigma_t$, and have to be estimated from the data. Here, we propose nonparametric kernel estimates of these quantities, which are then plugged into the expressions  \eqref{stat-n-std-1} and \eqref{stat-n-std-tau} in the place of the theoretical counterparts. This results in feasible, adaptive tools for power series correlation analysis. For simplicity, only the case of the statistic \eqref{stat-n-std-tau} will be considered, that in  \eqref{stat-n-std-1} being just a special case of it.

For a bandwidth $b$ and a kernel function $K(\cdot)$,  let
$$w_{tj}(b)= (nb)^{-1} K\left((t-j)/(nb)\right).$$
The Nadaraya-Watson estimators  for $P(a_t=1)$ and $E(|r_t|^{\delta})$, with a fixed regular design on $(0,1]$, are
\begin{equation}\label{Sig_NP}
\widehat{P(a_t=1)}=\sum_{j=1}^{n} w_{tj}(b_a) a_j,
\end{equation}
and
\begin{equation}\label{Sig_NP2}
 \widehat{E(|r_t|^{\delta})} =\sum_{j=1}^{n} w_{tj}(b_\tau) |r_j|^{\delta},
\end{equation}
respectively. The bandwidths $b_a$ and $b_\tau$ could be selected by usual  procedures for kernel regression implemented in the existing software.

With at hand, the nonparametric estimators, we are now in position to introduce the following powers correlations, which are robust to changes in the zero returns probability: for any integer $m\geq 1$,
\begin{equation}\label{stat-n-std-1b}
\widetilde{\Gamma}_{ns}^{(\delta)} (m) := \left(\tilde{\rho}_{ns}^{(\delta)} (1),\dots,\tilde{\rho}_{ns}^{(\delta)} (m)\right)',\:\mbox{ with }\:
\tilde{\rho}_{ns}^{(\delta)} (h) := \tilde{\gamma}_{ns}^{(\delta)} (h)\tilde{\gamma}_{ns}^{(\delta)} (0)^{-1},
\end{equation}
with
$$
\tilde{\gamma}_{ns}^{(\delta)} (h)=n^{-1}\sum_{t=1+h}^{n}
\left(|r_t|^\delta-\bar{r}^{(\delta)}\frac{\widehat{ P(a_t=1) }}{\bar{\hat{a}}}\right)\left(|r_{t-h}|^\delta-\bar{r}^{(\delta)}\frac{\widehat{ P(a_{t-h}=1) } }{\bar{\hat{a}}}\right),
$$
and   $\bar{\hat{a}}=n^{-1}\sum_{t=1}^{n}\widehat{P(a_t=1) }$.
Moreover,  the following powers correlations, are robust to changes in both the zero returns probability and the variance: for any integer $m\geq 1$,
\begin{equation}\label{stat-n-std}
\widetilde{\Gamma}_{ns,\sigma}^{(\delta)} (m):=\left(\tilde{\rho}_{ns,\sigma}^{(\delta)} (1),\dots,\tilde{\rho}_{ns,\sigma}^{(\delta)} (m)\right)',\:\mbox{with}\:
\tilde{\rho}_{ns,\sigma}^{(\delta)} (h):=\tilde{\gamma}_{ns,\sigma}^{(\delta)} (h)\tilde{\gamma}_{ns,\sigma}^{(\delta)} (0)^{-1},
\end{equation}
where
\begin{equation}\label{ert1}
\tilde{\gamma}_{ns,\sigma}^{(\delta)}(h) = n^{-1}\sum_{t=1+h}^{n}
\left( |r_t|^\delta - \widehat{E(|r_t|^{\delta} } \right)
\left(|r_{t-h}|^\delta - \widehat{E(|r_{t-h}| ^{\delta} }  \right).
\end{equation}
In the following, the estimators  \eqref{stat-n-std-1b} and \eqref{stat-n-std}  will be called the adaptive estimators of the powers autocorrelations.

We show that, for any fixed $m\geq 1$, the differences
\begin{equation}\label{as_equivb}
\widetilde{\Gamma}_{ns}^{(\delta)} (m)-\widehat{\Gamma}_{ns}^{(\delta)} (m)\quad \text{ and } \quad \widetilde{\Gamma}_{ns,\sigma}^{(\delta)} (m)-\widehat{\Gamma}_{ns,\sigma}^{(\delta)} (m),
\end{equation}
are asymptotically negligible when the null hypothesis $H_0^{(\delta)}:\Gamma_0^{(\delta)} (m)=0$ holds true. Thus $\widetilde{\Gamma}_{ns}^{(\delta)} (m)$  and  $\widetilde{\Gamma}_{ns,\sigma}^{(\delta)} (m)$ could replace $\widehat{\Gamma}_{ns}^{(\delta)} (m)$ and $\widehat{\Gamma}_{ns,\sigma}^{(\delta)} (m)$, respectively,  for testing $H_0^{(\delta)}$. The following assumption is used for the asymptotic equivalence between the unfeasible and the adaptive statistics.

\begin{assumption}[Kernel smoothing assumption]\label{kern-sm}

\begin{itemize}
\item[(i)] The kernel $K(\cdot)$ is a bounded  symmetric density function defined on the real line such that $K(\cdot)$ is non-decreasing on $(-\infty, 0]$ and decreasing on $[0,\infty)$ and $\int_\mathbb{R} |v|K(v)dv < \infty$. The function $K(\cdot)$ is differentiable except for a finite number of points and the derivative $K^\prime(\cdot)$  is a bounded  integrable function.
Moreover, the Fourier Transform $\mathcal{F}[K](\cdot)$ of $K(\cdot)$ satisfies $\int_{\mathbb{R}}  \left| s \mathcal{F}[K](s) \right|ds <\infty$.

\item[(ii)] The bandwidths $b_a$ and $b_\tau$ are taken in the range $\mathcal{B}_n = [c_{min} b_n, c_{max} b_n]$ with $0< c_{min}< c_{max}< \infty$ and  $nb_n^{4} + 1/nb_n^{2+\gamma} \rightarrow 0$  as $n\rightarrow \infty$, for some $\gamma >0$.
\end{itemize}
\end{assumption}

The technical conditions imposed on the kernel function are mild and satisfied by the kernels commonly used. Following the approach of Patilea and Ra\"{\i}ssi (2012), the bandwidths belong to a range $\mathcal{B}_n $ defined by some constants $c_{min}, c_{max}>0$  and a deterministic sequence $b_n$, $n\geq 1$, decreasing to zero at a suitable rate. For simplicity, we consider a common range for $b_a$ and $b_\sigma$, but the extension to the case where the two bandwidths have different decrease rates is straightforward. The asymptotic results are derived uniformly with respect to the bandwidths in such ranges. This provides theoretical grounds for the practitioner who  selects the bandwidth using the sample.

\begin{proposition}\label{propostu5v}
The Assumption \ref{kern-sm} is satisfied. Let  $m\geq 1$.
Under  the conditions of Proposition \ref{propostu3},
\begin{equation}\label{uty1}
\sqrt{n}\left| \widehat{\Gamma}_{ns}^{(\delta)}(m) - \widetilde{\Gamma}_{ns}^{(\delta)}(m) \right|  \stackrel{p}{\longrightarrow} 0,
\end{equation}
uniformly with respect to $b_a\in\mathcal B_n$, and under  the conditions of Proposition \ref{propostu4v}
\begin{equation}\label{uty2}
\sqrt{n}  \left| \widehat{\Gamma}_{ns,\sigma}^{(\delta)}(m) - \widetilde{\Gamma}_{ns,\sigma}^{(\delta)}(m) \right|  \stackrel{p}{\longrightarrow} 0,
\end{equation}
uniformly with respect to $b_\tau\in\mathcal B_n$.
\end{proposition}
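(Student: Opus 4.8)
The plan is to establish both displays first at the level of empirical autocovariances --- showing that substituting the Nadaraya--Watson estimates for the deterministic $P(a_t=1)$ and $E(|r_t|^\delta)$ perturbs each lag-$h$ autocovariance by $o_p(n^{-1/2})$, uniformly over the relevant bandwidth in $\mathcal B_n$ --- and then to pass to the autocorrelations by a Slutsky argument, using that the lag-$0$ autocovariances converge in probability to a positive constant (as already exploited in Propositions \ref{propostu3} and \ref{propostu4v}). I describe \eqref{uty1}; the proof of \eqref{uty2} is the same, with $E(|r_t|^\delta)$ and a constant multiple of $v^\delta(\cdot)g(\cdot)$ playing the roles of $P(a_t=1)$ and $g(\cdot)$, and without the renormalisation by $\bar r^{(\delta)}/\bar a$, hence if anything simpler.

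Fix $h\in\{1,\dots,m\}$; write $X_t=|r_t|^\delta$, $p_t=g(t/n)$, $\hat p_t=\widehat{P(a_t=1)}$, and let $c_t=\bar r^{(\delta)}p_t/\bar a$, $\hat c_t=\bar r^{(\delta)}\hat p_t/\bar{\hat a}$ be the recentrings in $\hat{\gamma}_{ns}^{(\delta)}(h)$ and $\tilde{\gamma}_{ns}^{(\delta)}(h)$, and put $d_t=\hat c_t-c_t$. Expanding the two products gives
\begin{equation}
\tilde{\gamma}_{ns}^{(\delta)}(h)-\hat{\gamma}_{ns}^{(\delta)}(h)
= -n^{-1}\sum_{t=1+h}^{n} d_{t-h}(X_t-c_t)
  -n^{-1}\sum_{t=1+h}^{n} d_{t}(X_{t-h}-c_{t-h})
  +n^{-1}\sum_{t=1+h}^{n} d_{t}d_{t-h},
\end{equation}
and it suffices to bound each of the three sums by $o_p(n^{-1/2})$, uniformly over $b_a\in\mathcal B_n$. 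Two observations feed every estimate. First, $\hat p_t-p_t=\nu_t+\beta_t$ with $\nu_t=\sum_j w_{tj}(b_a)(a_j-p_j)$ linear in the independent, bounded, centred variables $a_j-p_j$ and $\beta_t$ deterministic; accordingly $d_t$ splits into a stochastic part, built from $\nu_t$ and $\bar\nu=n^{-1}\sum_s\nu_s=O_p(n^{-1/2})$, and a deterministic part, built from $\beta_t$ and $\bar\beta$. Second, under the conditions of Proposition \ref{propostu3} one has $E(X_t)=\kappa\,g(t/n)$ for a constant $\kappa$, so $c_t=\bar r^{(\delta)}E(X_t)/\overline{EX}$ with $\overline{EX}=n^{-1}\sum_s E(X_s)$, and therefore $X_t-c_t=\xi_t-(E(X_t)/\overline{EX})\,\bar\xi$, where $\xi_t=X_t-E(X_t)$ are independent, mean zero, with $E\xi_t^4<\infty$, and $\bar\xi=n^{-1}\sum_s\xi_s=O_p(n^{-1/2})$; in particular $\sum_{t=1}^n(X_t-c_t)=0$ exactly.

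For the quadratic sum a mean-square bound suffices: the usual bias/variance analysis of the Nadaraya--Watson estimator under Assumption \ref{kern-sm} and the piecewise-Lipschitz property of $g$ gives $n^{-1}\sum_t d_t^2=O_p(b_n^2+(nb_n)^{-1})$, and since $nb_n^4\to0$, $nb_n^{2+\gamma}\to\infty$ force $b_n=o(n^{-1/4})$ and $b_n\gg n^{-1/2}$, one gets $|n^{-1}\sum_t d_td_{t-h}|\le n^{-1}\sum_t d_t^2=o_p(n^{-1/2})$. The two linear sums are the heart of the matter. Their deterministic parts, typified by $n^{-1}\sum_t\beta_{t-h}(X_t-c_t)$: writing $X_t-c_t$ as above, the $\xi_t$-part is a weighted sum of independent centred variables of variance $O(b_n/n)$, hence $o_p(n^{-1/2})$, while the $\bar\xi$-part is $O_p(n^{-1/2})$ times a deterministic $o(1)$ average. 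Their stochastic parts, typified by $n^{-1}\sum_t\nu_{t-h}(X_t-c_t)$: interchanging the order of summation yields $n^{-1}\sum_j(a_j-p_j)\sum_t w_{t-h,j}(b_a)(X_t-c_t)$, which after substituting the representation of $X_t-c_t$ becomes a bilinear form in the two families $\{a_j-p_j\}$ and $\{\xi_t\}$; by Assumption \ref{ident0} these are independent across distinct indices, so the diagonal $j=t$ --- whose weight $w_{t-h,t}(b_a)=(nb_a)^{-1}K(h/(nb_a))$ is independent of $t$ and $O((nb_a)^{-1})$ --- contributes $O_p((nb_n)^{-1})=o_p(n^{-1/2})$, while the off-diagonal part has mean zero and, using the localisation of the weights and $E\xi_t^2<\infty$, variance $O((n^2b_n)^{-1})=o(n^{-1})$; the leftover $\bar\xi$-terms are $O_p(n^{-1})$.

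The main obstacle is to make everything uniform over $b_a\in\mathcal B_n$, a continuum, for which a plain union bound on the stochastic bilinear term fails. Following the bandwidth-uniformity device of Patilea and Ra\"{\i}ssi (2012), the monotonicity of $K$ on $(-\infty,0]$ and on $[0,\infty)$ in Assumption \ref{kern-sm}(i) allows one to sandwich $w_{tj}(b_a)$ between $w_{tj}(b')$ at neighbouring points $b'$ of a deterministic grid in $\mathcal B_n$ with polynomially many points, so that each of the finitely many error processes above --- being, up to negligible remainders, monotone or Lipschitz in $b_a$ with controllable modulus --- can be bracketed, and a union bound over the grid upgrades the pointwise bounds to uniform convergence. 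A secondary, bookkeeping, difficulty is the treatment of the deterministic smoothing bias near the finitely many break points of $g$ (resp. $v^\delta g$) and near the endpoints of $(0,1]$, where only crude bounds hold on $O(nb_n)$ indices; these contributions are shown negligible by combining the bandwidth rates with the approximate centring of $X_t-c_t$ and the exact identity $\sum_t(X_t-c_t)=0$. Finally, $\sqrt n(\tilde{\gamma}_{ns}^{(\delta)}(0)-\hat{\gamma}_{ns}^{(\delta)}(0))=o_p(1)$ is the $h=0$ case of the same decomposition, and dividing by the lag-$0$ autocovariances yields \eqref{uty1} via Slutsky's lemma; \eqref{uty2} is handled identically.
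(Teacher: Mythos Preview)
Your approach is essentially that of the paper: decompose the kernel-estimator error into a deterministic bias piece and a stochastic fluctuation piece, substitute into the bilinear expansion of the autocovariance difference, and use the independence structure to extract the required cancellation --- you have simply made explicit the ``after some calculations'' that the paper delegates to Patilea and Ra\"{\i}ssi (2014). The only notable technical difference is the uniformity device: the paper invokes Sherman's (1994) maximal inequalities for degenerate $U$-processes to bound the stochastic term $V_t=\sum_j w_{tj}(b)\{|r_j|^\delta-E|r_j|^\delta\}$ uniformly over $b\in\mathcal B_n$ and $t$, whereas you propose a grid/bracketing argument based on the kernel regularity in Assumption~\ref{kern-sm}(i); both routes are valid here.
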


As a direct consequence of Proposition \ref{propostu5v}, for any choice of the bandwidth   $b_\tau\in\mathcal B_n$,
$\sqrt{n}\widetilde{\Gamma}_{ns,\sigma}^{(\delta)}(m)\stackrel{d}{\longrightarrow}\mathcal{N}(0,\zeta I_m),$
where $\zeta=\tilde{\zeta}/\dot{\zeta}$, with $\tilde{\zeta}$ and $\dot{\zeta}$  given in \eqref{zetanum} and \eqref{zetaden}. A similar statement holds true for $\sqrt{n}\widetilde{\Gamma}_{ns}^{(\delta)}(m)$ with $\tilde{\zeta}$ and $\dot{\zeta}$ given in  \eqref{sigtilde} and \eqref{sigcheck}.

Concerning the properties of the adaptive tools,  let us notice that  when the null hypothesis
$H_0^{(\delta)}:\Gamma_0^{(\delta)} (m)=0$ is violated,  $\widetilde{\Gamma}_{ns,\sigma}^{(\delta)}(m)$ is able to detect stochastic effects in the volatility. This aspect is formalized in the following result.

\begin{proposition}\label{propostu5-bis}
Assume that the conditions of Proposition \ref{propostu4-bis}, and Assumption \ref{kern-sm} hold true. Then the conclusion of Proposition \ref{propostu4-bis} remains valid with $\widetilde{\Gamma}_{ns,\sigma}^{(\delta)}(m)$ in place of $\widehat{\Gamma}_{ns,\sigma}^{(\delta)}(m)$.
\end{proposition}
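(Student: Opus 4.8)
The argument would parallel, with lighter moment requirements, the proof of the equivalence \eqref{uty2} in Proposition \ref{propostu5v}. The plan is to show that passing from the unfeasible statistic $\widehat{\Gamma}_{ns,\sigma}^{(\delta)}(m)$ to the adaptive one $\widetilde{\Gamma}_{ns,\sigma}^{(\delta)}(m)$ --- that is, replacing $E(|r_t|^\delta)$ in \eqref{stat-n-std-tau} by the Nadaraya--Watson estimate \eqref{Sig_NP2} --- perturbs the statistic only by a quantity that tends to zero in probability, uniformly over $b_\tau\in\mathcal B_n$; Proposition \ref{propostu4-bis} together with Slutsky's lemma then delivers the conclusion. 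Since here only a $o_p(1)$, and not a $o_p(n^{-1/2})$, bound on this perturbation is needed, the moment condition $E(|\varepsilon_t|^{2\delta})<\infty$ carried by Proposition \ref{propostu4-bis} suffices, in place of the $4\delta$-moment condition used for Proposition \ref{propostu5v}.

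I would first note that, under the conditions of Proposition \ref{propostu4-bis}, $\hat\gamma_{ns,\sigma}^{(\delta)}(0)$ converges in probability to a strictly positive constant (this is part of the proof of that proposition), so that, applying Slutsky's lemma to the ratios $\tilde\rho_{ns,\sigma}^{(\delta)}(h)=\tilde\gamma_{ns,\sigma}^{(\delta)}(h)/\tilde\gamma_{ns,\sigma}^{(\delta)}(0)$, it is enough to prove
\begin{equation}\label{pp:reduce}
\sup_{b_\tau\in\mathcal B_n}\left| \tilde\gamma_{ns,\sigma}^{(\delta)}(h)-\hat\gamma_{ns,\sigma}^{(\delta)}(h)\right|\stackrel{p}{\longrightarrow}0\qquad\text{for each }h\in\{0,1,\dots,m\}.
\end{equation}
Writing $X_t=|r_t|^\delta-E(|r_t|^\delta)$ and $\Delta_t=\widehat{E(|r_t|^\delta)}-E(|r_t|^\delta)$, so that $|r_t|^\delta-\widehat{E(|r_t|^\delta)}=X_t-\Delta_t$, one obtains
\begin{equation}\label{pp:expand}
\tilde\gamma_{ns,\sigma}^{(\delta)}(h)-\hat\gamma_{ns,\sigma}^{(\delta)}(h)=n^{-1}\sum_{t=1+h}^{n}\left(\Delta_t\Delta_{t-h}-X_t\Delta_{t-h}-\Delta_t X_{t-h}\right),
\end{equation}
and, by the Cauchy--Schwarz inequality, the right-hand side of \eqref{pp:expand} is bounded in absolute value by $2\big(n^{-1}\sum_{t}X_t^2\big)^{1/2}R_n(b_\tau)^{1/2}+R_n(b_\tau)$ with $R_n(b_\tau):=n^{-1}\sum_{t=1}^{n}\Delta_t^2$. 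Under the conditions of Proposition \ref{propostu4-bis} the process $(|\widetilde r_t|^\delta)$ is strictly stationary with $E(|\widetilde r_t|^{2\delta})<\infty$; hence $E(|r_t|^{2\delta})\le E(|\widetilde r_t|^{2\delta})<\infty$, so $n^{-1}\sum_t X_t^2=O_p(1)$ by Markov's inequality, and \eqref{pp:reduce} reduces to
\begin{equation}\label{pp:key}
\sup_{b_\tau\in\mathcal B_n}R_n(b_\tau)\stackrel{p}{\longrightarrow}0.
\end{equation}

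To obtain \eqref{pp:key} I would use the bias--variance split $\Delta_t=B_t(b_\tau)+V_t(b_\tau)$, with $B_t(b_\tau)=\sum_j w_{tj}(b_\tau)E(|r_j|^\delta)-E(|r_t|^\delta)$ deterministic and $V_t(b_\tau)=\sum_j w_{tj}(b_\tau)X_j$. Since $(a_t)$ is independent of $(\widetilde r_t)$ and, by \eqref{vol_vol} with $E(|\varepsilon_t|^\delta)=1$, $E(|\widetilde r_t|^\delta)=c/(1-\theta)$, one has $E(|r_t|^\delta)=P(a_t=1)E(|\widetilde r_t|^\delta)=g(t/n)\,c/(1-\theta)$, a piecewise Lipschitz deterministic function of $t/n$; hence, by Assumption \ref{kern-sm}, $|B_t(b_\tau)|=O(b_n)$ uniformly in $b_\tau\in\mathcal B_n$ outside the $O(nb_n)$ indices $t$ within distance $O(nb_n)$ of the finitely many break points of $g(\cdot)$, on which $B_t(b_\tau)$ stays bounded, so $n^{-1}\sum_t B_t(b_\tau)^2=O(b_n)=o(1)$ uniformly. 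For the stochastic part, the power ARCH$(1)$ recursion \eqref{vol_vol} with $0<\theta<1$ makes $(|\widetilde r_t|^\delta)$ geometrically ergodic and, $(a_t)$ being independent across $t$ and of $(\widetilde r_t)$, $(X_t)$ has absolutely summable autocovariances; therefore $E\{V_t(b_\tau)^2\}=O((nb_\tau)^{-1})$ and, since Assumption \ref{kern-sm}(ii) forces $nb_n^2\to\infty$, $E\{n^{-1}\sum_t V_t(b_\tau)^2\}=O((nb_n)^{-1})=o(1)$, whence $n^{-1}\sum_t V_t(b_\tau)^2=o_p(1)$ by Markov's inequality. The uniformity over $b_\tau\in\mathcal B_n$ in these bounds is handled as in the proof of Proposition \ref{propostu5v}: the map $b_\tau\mapsto n^{-1}\sum_t V_t(b_\tau)^2$ is Lipschitz in $b_\tau$ with a polynomially growing constant, so evaluation on a fine grid of $\mathcal B_n$ together with a union bound suffices --- this is the device of Patilea and Ra\"{\i}ssi (2012).

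Combining \eqref{pp:expand}, the Cauchy--Schwarz bound and \eqref{pp:key} yields \eqref{pp:reduce}; together with Proposition \ref{propostu4-bis} and Slutsky's lemma this gives $\widetilde{\Gamma}_{ns,\sigma}^{(\delta)}(m)\stackrel{p}{\longrightarrow}C_1$. The main obstacle is \eqref{pp:key}, and within it the control of the stochastic term $V_t(b_\tau)$ \emph{uniformly} over the bandwidth range while only the $2\delta$-moment of the innovations is available; the point that makes this go through is that, exactly as in Proposition \ref{propostu5v}, only the weak-dependence (summable autocovariances of $(X_t)$) and the finiteness of the second moment of the smoothed process are used, and both properties survive the replacement of the null model by the power ARCH$(1)$ alternative considered here.
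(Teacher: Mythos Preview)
Your argument is correct. The paper's own proof is the single sentence ``The justification of this result is a direct consequence of [Proposition] \ref{propostu5v} and is thus omitted,'' so you are supplying exactly the details the authors leave implicit. Two points are worth noting in comparison.

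First, the paper's pointer to Proposition \ref{propostu5v} is slightly loose: that result is stated and proved under the null (independent $(r_t)$), where the stochastic term $V_t$ is controlled via Sherman (1994). Under the power ARCH$(1)$ alternative of Proposition \ref{propostu4-bis} the $|r_t|^\delta$ are dependent, so Sherman's device does not apply verbatim. You correctly fill this gap by invoking the absolutely summable autocovariances of the strictly stationary process $(|\widetilde r_t|^\delta)$ (geometric ergodicity of the ARCH recursion) to get $E\{V_t(b_\tau)^2\}=O((nb_\tau)^{-1})$. This is the right adaptation, and it is precisely what ``direct consequence'' glosses over.

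Second, you exploit that only an $o_p(1)$ perturbation bound is needed here, not the $o_p(n^{-1/2})$ of \eqref{uty2}; this is why the $2\delta$-moment assumption carried by Proposition \ref{propostu4-bis} suffices. One caveat: when you write ``$E(|\widetilde r_t|^{2\delta})<\infty$'' you are implicitly assuming the ARCH$(1)$ has a finite second $\delta$-moment, which requires a condition of the type $\theta^2E(|\varepsilon_t|^{2\delta})<1$ rather than merely $E(|\varepsilon_t|^{2\delta})<\infty$. The paper is equally casual on this point (its proof of Proposition \ref{propostu4} already treats $|r_t|^\delta$ as an $L^2$-mixingale), so this is not a defect of your argument relative to the paper, but you may want to state the extra moment condition explicitly.
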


\section{Numerical illustrations}\label{num.exp}

As mentioned throughout the paper, we test $H_0^{(\delta)}\:\mbox{vs.}\:H_1^{(\delta)}$ for a relatively small horizon, for instance $m=5$ (see Example \ref{ex1}). On the other hand, the power autocorrelations are compared with suitable confidence bands for large $h$ (see Example \ref{ex2}). Note that these kind of outputs correspond to the usual practice in time series econometrics.

Using tools obtained directly from the asymptotic results of the above sections lead to size distortions. Indeed, estimating $P(a_t=1)$ or  $E\left(|r_t|^\delta\right)$ could be sometimes difficult, in particular when abrupt shifts are present, as in the Provida stock example. Then, following Patilea and Ra\"{\i}ssi (2014), bootstrap tests and confidence bounds will be built, using: 
\begin{equation}\label{ert2b}
\widehat{\gamma}_{*ns}^{(\delta)}(h)=n^{-1}\sum_{t=1+h}^{n}
\xi_t \left(|r_t|^\delta- \bar{r}^{(\delta)}\frac{\widehat{P(a_t=1)}}{\hat{\bar{a}}}\right) \xi_{t-h} \left(|r_{t-h}|^\delta- \bar{r}^{(\delta)}\frac{\widehat{P(a_t=1)}}{\hat{\bar{a}}} \right),
\end{equation}
 where $\hat{\bar{a}}:=n^{-1}\sum_{t=1}^{n}\widehat{P(a_t=1)}$, and
\begin{equation}\label{ert2c}
\widehat{\gamma}_{*ns,\sigma}^{(\delta)}(h)=n^{-1}\sum_{t=1+h}^{n}
\xi_t \left(|r_t|^\delta- \widehat{E(|r_t| ^{\delta})}\right) \xi_{t-h} \left(|r_{t-h}|^\delta- \widehat{E(|r_{t-h}| ^{\delta})} \right).
\end{equation}
The bootstrap disturbances process $(\xi_t)$ is independent from $(r_t)$, and such that $E(\xi_t)=0$ and $E(\xi_t^2)=1$. Several choices for $(\xi_t)$ are available in the literature. For the sake of conciseness, we only display the outputs for the Mammen distribution:
$$
  P\left(\xi_t=-0.5(\sqrt{5}-1)\right) = \frac{0.5(\sqrt{5}+1)}{\sqrt{5}},\quad
  P\left(\xi_t=0.5(\sqrt{5}+1)\right) = \frac{0.5(\sqrt{5}-1)}{\sqrt{5}}.
$$
Considering the Rademacher distribution for $(\xi_t)$ leads to the same general conclusions (the outputs are not displayed here).

In all the experiments, $B=3999$ bootstrap replicates are generated. For testing $H_0^{(\delta)}\:\mbox{vs.}\:H_1^{(\delta)}$, we define the statistic robust to a time-varying probability only (denoted by "RP" hereafter) $\widetilde{\mathcal{S}}_{ns}^m=n\widetilde{\Gamma}_{ns}^\delta(m)'\widetilde{\Gamma}_{ns}^\delta(m)$. Then, we can compute bootstrap counterparts $\widetilde{\mathcal{S}}_{*ns}^{m}$ using the $\widehat{\gamma}_{*ns}^{(\delta)}(h)$'s. The RP bootstrap test for short-run higher order dynamics consists in rejecting $H_0^{(\delta)}$, at level $\alpha$ if $\widetilde{\mathcal{S}}_{ns}^m$ is greater than the $(1-\alpha)$-quantile of the $\widetilde{\mathcal{S}}_{*ns}^{m}$'s. Also, the bootstrap $\alpha$ level confidence intervals for the RP autocorrelations may be obtained using the $\alpha/2$ and $1-\alpha/2$ quantiles of the bootstrap powers autocorrelations replicates $\widetilde{\Gamma}_{*ns}^\delta(m)$ obtained from the $\widehat{\gamma}_{*ns}^{(\delta)}(h)$'s. Similarly, tools that are robust to both time-varying probability and variance (denoted "RPV" hereafter), may be computed from the $\widehat{\gamma}_{*ns,\sigma}^{(\delta)}(h)$'s. In the following, we refer to the "RP" and "RPV" tools, as the "adaptive" tools. We focus on the case $\delta=1$ (\emph{i.e.}, the absolute values autocorrelations analysis), the extension to other values of $\delta$ is straightforward.

For implementing the RP and RPV powers autocorrelations,  the normal kernel is used. The bandwidth $b_a$ minimizes the following  leave-one-out cross validation criterion (LOOCV) for the estimation of the probability structure:

\begin{equation}\label{cv-px}
CV(b_a)=\sum_{t=1}^{n}\left(\widehat{P_{-t}(a_t=1)}-a_t\right)^2,
\end{equation}
where $\widehat{P_{-t}(a_t=1)}$ is the estimator of $P(a_t=1)$ obtained by omitting $a_t$. Similarly, for the RPV powers autocorrelations, for any $\delta\geq 1$,  the bandwidth $b_\tau$ 
can be obtained by minimizing the following LOOCV criterion:
\begin{equation}\label{cv-vx}
CV(b_\tau)= \sum_{t=1}^{n} \left(\widehat{E_{-t}(|r_t|^\delta)}-|r_t|^\delta\right)^2,
\end{equation}
with obvious notation.

\subsection{Monte Carlo experiments}

Several kinds of data are simulated to assess the finite sample properties of the adaptive and classical tools. Recall that in practice $\widetilde{r}_t=\sigma_t\eta_t$ is partially observed, while $r_t=a_t\widetilde{r}_t$ is fully observed.
The $\sigma_t$, $\eta_t$, and the $P(a_t=1)$ are specified according to several situations of interest:


\begin{itemize}
\item[(a)] (Constant volatility) Neither short-run nor long-run dynamics are considered for the volatility, by setting $\eta_t\sim\mathcal{N}(0,1)$ independent and $\sigma_t=1$ for all $t$.
\item[(b)]  (Short-run dynamics in the volatility, without any long-run effects) The
following classical GARCH(1,1) model is used:
\begin{eqnarray}
  \widetilde{r}_t&=& \sigma_t\eta_t \label{garch-mod}\\
  \sigma_t&=&0.01+0.1\widetilde{r}_{t-1}^2+0.8\sigma_{t-1}^2, \nonumber
\end{eqnarray}
where $\eta_t\sim\mathcal{N}(0,1)$. Clearly, $(\widetilde{r}_t)$ is strictly stationary, but conditionally heteroscedastic, such that $E(\widetilde{r}_t)=0$ and $E(\widetilde{r}_t^2)<\infty$. In this case, we set $a_t=0$ if $\widetilde{r}_t<\mu$, where $\mu$ is the median of $\widetilde{r}_t$.
\item[(c)] (Long-run variance effects) The deterministic specification as in Mikosch and St\u{a}ric\u{a} (2004) is used:
 $\eta_t\sim\mathcal{N}(0,1)$ independent, with $\sigma_t=v(t/n)$, and $v(r)=1_{\{(0,0.4]\}}(r)+(5r-1)\times1_{\{(0.4,0.6]\}}(r)+2\times1_{\{(0.6,1]\}}(r)$ (see Proposition \ref{propostu5}). The latter case describes a quick shift in the unconditional variance.
\end{itemize}

In order to investigate the effects of the zero returns on the power autocorrelations analysis, we set
$a_t=\dot{a}_t\ddot{a}_t$, with $\ddot{a}_t=0$ if $|\eta_t|\leq0.063$, with $\eta_t\sim\mathcal{N}(0,1)$, so that $P(\ddot{a}_t=1)=0.9$, and
examine the following cases for the long-run structure of the zero returns probability:

\begin{itemize}
\item[(1)] (Constant zero returns probability) $P(\dot{a}_t=1)$ is taken constant equal to 0.5. 
\item[(2)] (Time-varying probability) A quick shift is depicted by $P(\dot{a}_t=1)=\dot{g}(t/n)$, with $\dot{g}(r)=0.2\times1_{\{(0,0.4]\}}(r)+(3.5r-1.2)\times1_{\{(0.4,0.6]\}}(r)+0.9\times1_{\{(0.6,1]\}}(r)$. Among other possible events, such a behavior can be encountered when a company is subject to a take-over bid (decreasing quick shift as in the Provida stock case), or when a capital increase is achieved (increasing quick shift as in the Molymet stock case). Here, we restrict ourselves to an increasing quick shift to avoid lengthy outputs. Also, progressive changes could commonly be observed in the zero returns probability structure, as for the Cruzados or Conchatoro stocks. However, for the sake of conciseness, the outputs for such a situation are not displayed, as they are similar to the non-constant case presented here.
\end{itemize}


In all our experiments, $R=5000$ independent trajectories of length $n=100,200,$ $400,800$ are used.
The outputs are presented in Figure \ref{fig-index} and Table \ref{conf-bound-homo}-\ref{port-reject}. In Table \ref{port-reject} and Figure \ref{fig-index}, the rejection frequencies of the portmanteau tests are reported. In Table \ref{conf-bound-homo}-\ref{conf-bound-shift}, the frequencies of the adaptive and the classical autocorrelations outside their 95\% confidence bounds are given.\\

We begin to investigate the situations with no long-run or short-run effects in the volatility, i.e. (a)-(1) and (a)-(2). In Table \ref{conf-bound-homo}, the outputs for a constant $P(a_t=1)$ are given (the (a)-(1) case). It can be seen that the frequencies are close to the 5\% level in general. This can be explained by the fact that the classical and adaptive tools are all asymptotically valid when $P(a_t=1)$ is constant (see Proposition \ref{propostu}). The same can be stated for the portmanteau tests from Table \ref{port-reject}. We only note some slight finite sample distortions for the RP and RPV tools when compared to the classical tools. Also it seems that the RP tools have some slight advantages compared with the RPV tools. Nonetheless, the good asymptotic results for the classical autocorrelations, are no longer available in presence of a time-varying zero returns probability (see the case (a)-(2) in Table \ref{conf-bound-shift1}). It can be observed that the frequencies of classical autocorrelations outside its confidence bounds, increase as a large sample size is taken, although there is no stochastic dynamics in the volatility. Conversely, the adaptive autocorrelations have satisfactory outputs. We only note some small sample distortions. In the same way, it emerges from the middle panel of Table \ref{port-reject}, that the standard portmanteau test is inadequate in the presence of a non-constant zero returns probability. In contrast, the adaptive portmanteau tests seem to well control the type I error. Finally, let us underline that some finite sample advantage can be noticed for the more specific RP tools, when compared to the RPV tools. These simulations results are inline with Proposition \ref{propostu2}, \ref{propostu3}, \ref{propostu4v}, \ref{propostu5v} and \ref{propostu5-bis}.

Next, the properties of the power autocorrelations in the presence of a deterministic but non-constant unconditional variance is analyzed. For simplicity, we restrict ourselves to the case where the probability is also time-varying (i.e. the (c)-(2) case). Indeed, the outputs for a constant probability, and a time-varying variance, lead to the same conclusions. The outputs in Table \ref{conf-bound-shift} show that the classical and RP autocorrelations are not able to distinguish between stochastic long run dynamics (for instance IGARCH effects), and structural changes in the variance structure. This is expected, as they are not intended to take into account unconditional heteroscedastic patterns. On the other hand, it can be noted that the RPV autocorrelations display acceptable results. The same comments can be made from the bottom panel of Table \ref{port-reject}, where only the RPV portmanteau test reasonably control the type I error.

The ability of the autocorrelations to detect second order dynamics is now studied. To this aim we consider case (b). Note that the probability and variance are set constant in order to make a fair comparison between the standard and adaptive tools. Indeed, all the methods are valid in this particular case. To avoid lengthy outputs, we only display the rejection frequencies of the portmanteau tests in Figure \ref{fig-index}. Some loss of power can be observed for the adaptive portmanteau tests, when compared to the standard portmanteau test. Nevertheless, the adaptive tests have shown a clear ability to detect higher order dynamics. Finally, let us highlight that the RPV test suffers from a loss of power when compared to the simpler RP test.\\

As a conclusion, 
if the time-varying non-zero returns probability is accompanied by some structural unconditional variance changes, the tools based on the RPV autocorrelations should be used. Nevertheless, the more specific RP and classical autocorrelations achieve better size and power results in their respective fields of application. Hence, in view of our simulation results, it is important to adequately determine the analysis framework for illiquid assets. The set-up to be considered can be identified by examining the stock's past events, or graphical representations. 
Furthermore, descriptive statistics estimating $\int g$ and $E(|r_t|^\delta|a_t=1)$ over time can be used. 


\subsection{Real data analysis}

In this part, the serial correlations of the absolute returns ($\delta=1$) presented in the Introduction are investigated. To this aim, we first try to identify the appropriate autocorrelations to be used for the analysis of each stock, considering the evidence from the data. Recall that the returns are plotted in Figure \ref{ns-stocks}, together with the kernel estimator of the daily zero return probability. Recall also that underlying information strengthening the idea of a structural change is available for some stocks. In order to complement this piece of information, we display the changes in the zero returns probability and in the variance employing descriptive tools inspired from the variance profile of Cavaliere and Taylor (2007) in Figure \ref{ns-stocks-profiles}. More precisely, the probability profile is given by $\hat{p}(s):=\left(\sum_{t=1}^{n}a_t\right)^{-1}\left(\sum_{t=1}^{[ns]}a_t\right)$, $0<s\leq 1$. For the absolute return profile, let $r_{t_1},r_{t_2},\dots,r_{t_\nu}$, $t_\nu\leq n$, the non-zero values of the sequence $(r_t)$. The absolute return profile is then defined by $\widehat{ar}(s):=\left(\sum_{j=1}^{\nu}|r_{t_j}|\right)^{-1}\left(\sum_{j=1}^{[\nu s]}|r_{t_j}|\right)$. In the following, we present the arguments leading us to focus on a particular kind of autocorrelation for each stock.

\begin{itemize}
\item Provida: A take-over bid occurred in 2013, which caused a rapid decrease of the non zero returns probability. This observation is confirmed by Figure \ref{ns-stocks} and \ref{ns-stocks-profiles}. On the other hand, Figure \ref{ns-stocks-profiles} suggests that the unconditional variance is constant for this stock. It then seems that the RP autocorrelations can be safely considered.
\item Molymet: The company achieved an increase of capital in 2010. In Figure \ref{ns-stocks}, this results in a sharp increase of the non-zero returns probability. In addition, from Figure \ref{ns-stocks-profiles}, we can strongly suspect that the 2010 increase of capital also caused a structural break in the unconditional variance structure. In view of the above arguments, we rely on the RPV autocorrelations.
\item Conchatoro: From Figure \ref{ns-stocks} and \ref{ns-stocks-profiles}, a progressive increase of the non-zero returns probability can be seen throughout most of the decade 2000-2010. This can be explained by the developing Chilean stocks market during this period. Figure \ref{ns-stocks-profiles} suggests that the unconditional variance is constant, so the RP autocorrelations can be used.
\item Cruzados: From Figure \ref{ns-stocks} and \ref{ns-stocks-profiles}, a long-range decrease of the non-zero returns probability structure can be strongly suspected. On the other hand, Figure \ref{ns-stocks-profiles} show that the unconditional variance seems constant, so it is advisable to use the RP autocorrelations.
\item Lipigas: Figure \ref{ns-stocks} and \ref{ns-stocks-profiles} strongly show that the non-zero returns probability is constant. From Figure  \ref{ns-stocks-profiles}, the variance seems also constant. For this case, the classical autocorrelations could be examined.
\item Las Condes: Figure \ref{ns-stocks} and \ref{ns-stocks-profiles} clearly lead us to conclude that both the non-zero returns probability and unconditional variance are constant. For these reasons, the classical autocorrelations can be certainly considered for this stock.
\end{itemize}

In Table \ref{stat-test}, the relevant p-values of the portmanteau tests are in bold underlined typescript. The sample sizes of the full series are also given in Table \ref{stat-test}.
In Figure \ref{fig-index-2}-\ref{fig-index-3}, the classical and adaptive autocorrelations are also plotted 
to assess the significance of the autocorrelations at different horizons. For short-run dynamics, $1\leq h\leq 10$ is studied, complementing the outputs of the portmanteau tests. In order to investigate some possible strong persistency of the volatility shocks, $10<h\leq20$ are examined. Long-run effects are studied considering $20<h\leq60$. Recall that the adaptive tools are applied, generating $B=3999$ bootstrap replicates.

%

From Figure \ref{fig-index-2}-\ref{fig-index-3} and Table \ref{stat-test}, short-run serial correlations for the absolute returns can be clearly noted for all the stocks, if the relevant autocorrelations are examined. The volatility clustering that can be observed in Figure \ref{ns-stocks} for all the stocks, confirms our diagnostic. Likewise, focusing on large $h$ in Figure \ref{fig-index-2}-\ref{fig-index-3}, lead us to conclude that there is no strong persistency of the shocks, unless for the Conchatoro company. On the other hand, let us underline that if the effect of a non-stationary zero returns probability is neglected, the classical autocorrelations might suggest the spurious existence of IGARCH effects in the returns (see the autocorrelations plots of the Provida, Molymet and Cruzados stocks). Finally, it is likely that the RPV tools may suffer from a lack of power in detecting higher correlations. This can be seen from the outputs of the Cruzados and Lipigas stocks in Table \ref{stat-test}. The classical portmanteau (resp. RP) test seems more reliable for the Lipigas (resp. Cruzados) stock, in view of the volatility clustering in Figure \ref{ns-stocks}.

\section{Conclusion}
\label{concl.sec}

It emerges from our study that the classical autocorrelations for higher dynamics, are still valid when the returns sequence of an illiquid asset is stationary. Nevertheless, a time-varying zero return probability can be commonly encountered in practice. For instance, illiquid stocks of emerging markets, or stocks with specific events often exhibit such a behavior. In addition, it should be noted that these kinds of facts may also generate unconditional heteroscedasticity. As a consequence, adaptive tools are proposed to take into account these non-standard situations. The properties of the different powers correlations studied here, bring to light the following main facts for the stocks displaying a time-varying degree of illiquidity.



\begin{itemize}
\item It is commonly admitted that short-run volatility dynamics are present for most of the stocks. Nevertheless, our outputs clearly indicate that the classical autocorrelations generally deliver doubtful conclusions in our framework. As a consequence, it is important to have available tools to ensure that such effects are not spurious. The analysis of the adaptive tools for the stocks studied in this paper, seems to confirm rigorously that short-run dynamics is a common pattern for illiquid assets.
\item Long-run effects are frequently assumed for the stock returns modeling. 
    However, if the classical autocorrelations are used, such long-run volatility effects are likely to be spurious in view of the non-constant behavior of the zero return probability. A practitioner could at least have a misleading assessment of a strong shocks persistency for this kind of stocks. These erroneous assessments are the consequence of the confusion between the volatility clustering, and the inhomogeneous non zero returns distribution over time. Also, considering the classical autocorrelations can lead to inaccurate conclusions on the consequences of some events on the stock returns (as for instance a take-over bid or a capital increase spuriously producing long memory effects). In contrast, the more accurate analyses using the adaptive tools, suggest that the shock persistency may be often low for the non-stationary illiquid stocks. 
\end{itemize}


\section*{References}
\begin{description}
\item[]{\sc Andrews, D.W.K.} (1988) Laws of large numbers for dependent non-identically distributed random variables. \textit{Econometric Theory} 4, 458--67.
\item[]{\sc Cavaliere, G., and Taylor, A.M.R.} (2007) Time-transformed unit-root tests for models with non-stationary volatility. \textit{Journal of Time Series Analysis} 29, 300--330.
\item[]{\sc Cavaliere, G., and Taylor, A.M.R.} (2008) Bootstrap unit root tests for time Series with nonstationary volatility. \textit{Econometric Theory} 24, 43--71.
\item[]{\sc Dahlhaus, R.} (1997) Fitting time series models to nonstationary processes. \textit{Annals of Statistics} 25, 1--37.
\item[]{\sc Dunsmuir, W., and  Robinson, P.M.} (1981) Estimation of time
series models in the presence of missing data.  \emph{Journal of the American Statistical Association} 76, 560--568.
\item[]{\sc Engle, R.F.} (1982) Autoregressive conditional heteroscedasticity with estimates of the variance of United Kingdom inflation. \textit{Econometrica} 50, 987--1007.
\item[]{\sc Engle, R.F., and Rangel, J.G.} (2008) The spline GARCH model for unconditional volatility and its global macroeconomic causes. \textit{Review of Financial Studies} 21, 1187--1222.
\item[]{\sc Francq, C., and Zako\"{i}an, J-M.} (2019) \textit{GARCH models : structure, statistical inference, and financial applications}. Wiley.
\item[]{\sc Fry\'{z}lewicz, P.} (2005) Modelling and forecasting financial log-returns as locally stationary wavelet processes. \textit{Journal of Applied Statistics} 32, 503--528.
\item[]{\sc  Hristache, M., and Patilea, V.} (2017) Conditional moment models with data missing at random. \emph{Biometrika} 104, 735--742.
\item[]{\sc Kew, H., and Harris, D.} (2009) Heteroskedasticity-robust testing for a fractional unit root. \textit{Econometric Theory} 25, 1734--1753.
\item[]{\sc Lesmond, D.A.} (2005) Liquidity of emerging markets. \textit{Journal of Financial Economics} 77, 411--452.
\item[]{\sc McLeod, A.I., and Li, W. K.} (1983) Diagnostic checking ARMA time series models using squared-residual autocorrelations. \textit{Journal Time Series Analysis} 4, 269--273.
\item[]{\sc Mikosch, T., and St\u{a}ric\u{a}, C.} (2004) Nonstationarities in financial time series, the long-range dependence, and the IGARCH effects. \textit{Review of Economics and Statistics} 86, 378--390.
\item[]{\sc Parzen, E.} (1963) On spectral analysis with missing observations and amplitude modulation. \textit{Sankhya Series A} 25, 383--392.
\item[]{\sc Patilea, V., and Ra\"{i}ssi, H.} (2012) Adaptive estimation of vector autoregressive models with time-varying variance: application to testing linear causality in mean. \textit{Journal of Statistical Planning and Inference} 142, 2891--2912.
\item[]{\sc Patilea, V., and Ra\"{i}ssi, H.} (2013) Corrected portmanteau tests for VAR models with time-varying variance. \textit{Journal of Multivariate Analysis} 116, 190--207.
\item[]{\sc Patilea, V., and Ra\"{i}ssi, H.} (2014) Testing second order dynamics for autoregressive processes in presence of time-varying variance. \emph{Journal of the American Statistical Association} 109, 1099--1111.
\item[]{\sc  Phillips, P.C.B., and Xu, K.L.} (2006) Inference in Autoregression under Heteroskedasticity. \textit{Journal of Time Series Analysis} 27, 289--308.
\item[]{\sc Ra\"{i}ssi, H.} (2018) Testing normality for unconditionally heteroscedastic macroeconomic variables. \textit{Economic Modelling} 70, 140--146.
\item[]{\sc Ra\"{i}ssi, H.} (2020) On the correlation analysis of illiquid stocks.  \texttt{arXiv:2008.06168v1}.
\item[]{\sc Romano, J.P., and Thombs, L. A.} (1996) Inference for autocorrelations under weak assumptions. \textit{Journal of the American Statistical Association} 91, 590--600.
\item[]{\sc Sen, P. K., and Singer, J. M.} (1993) \textit{Large Sample Methods In Statistics}. Chapman \& Hall.
\item[]{\sc St\u{a}ric\u{a}, C., and Granger, C.} (2005) Nonstationarities in stock returns. \textit{Review of Economics and Statistics} 87, 503--522.
\item[]{\sc Sherman, R.P.} (1994) Maximal Inequalities for Degenerate $U-$Processes with Applications to Optimization Estimators. \textit{The Annals of Statistics}  22, 439--459.
\item[]{\sc Stoffer, D.S., and Toloi, C.M.C.} (1992) A note on the Ljung-Box-Pierce portmanteau statistic with missing data. \textit{Statistics and Probability Letters} 13, 391--396.
\item[]{\sc Taylor, S.J.} (2007) \textit{Modelling Financial Time Series} (2nd edition). World Scientific.
\item[]{\sc Wang, S., Zhao, Q., and Li, Y.} Testing for no-cointegration under time-varying variance. \textit{Economics Letters} 182, 45--49.
\item[]{\sc Xu, K.L., and Phillips, P.C.B.} (2008) Adaptive estimation of autoregressive models with time-varying variances. \textit{Journal of Econometrics} 142, 265--280.

\end{description}

\newpage

\section*{Proofs}

\begin{proof}[Proof of Proposition \ref{propostu2}]
First note that under the conditions of Proposition \ref{propostu2},   $(r_t)$ is a sequence of independent variables and the conditional distribution of $r_t$ given  $a_t=1$ does not depend on $t$. We begin with the numerator of $\hat{\rho}_s^{(\delta)}(h)$. From the Kolmogorov SLLN for independent but non-identically random variables (see, for instance,  Sen and Singer (1993), Theorem 2.3.10), we have

\begin{eqnarray*}
\bar{r}^{(\delta)} =n^{-1}\sum_{t=1}^{n}|r_t|^\delta&\stackrel{a.s.}{\longrightarrow}&n^{-1}\sum_{t=1}^{n}E\left(|r_t|^\delta\right)\\
&=&E(|r_t|^\delta|a_t=1)\left\{n^{-1}\sum_{t=1}^{n}P(a_t=1)\right\}.
\end{eqnarray*}
For the equality, we used Assumption \ref{cst-mom}, and the fact that $\sigma_t$ is constant which imply that $E(|r_t|^\delta|a_t=1)$ does not depend on $t$.
Using the usual convergence property of Riemann sums, and in view of the Lipschitz continuous condition and the finite number of breaks conditions in Assumption \ref{tv-prob}, we write
$$n^{-1}\sum_{t=1}^{n}P(a_t=1)=\int_{0}^{1}g(s)ds+O(n^{-1}).$$
Then deduce that
\begin{equation}\label{bara}
\bar{r}^{(\delta)} \stackrel{a.s.}{\longrightarrow} E(|r_t|^\delta|a_t=1)\int_{0}^{1}g(s)ds.
\end{equation}
By similar arguments,
$$
n^{-1}\sum_{t=1}^{n}|r_t|^{2\delta} \stackrel{a.s.}{\longrightarrow}E(|r_t|^{2\delta}|a_t=1)\int_{0}^{1}g(s)ds.
$$
Next, let $1\leq h \leq m$ and consider the sequence of independent terms
$$\mathcal S = (\ldots,|r_{t-h-1} r_{t-2h-1}|^\delta, |r_t r_{t-h}|^\delta, |r_{t+h+1}r_{t+1}|^\delta,\ldots).$$
We can split the sequence $(|r_tr_{t-h}|^\delta)$ into $h+1$ sub-sequences of independent terms which are $\mathcal S$, and the $h$ sequences obtained by successive applications of the shift operator to $\mathcal S$. For each of these $h+1$ sub-sequences, we can apply the Kolmogorov SLLN and the arguments from above, and deduce their almost sure convergence. Our assumptions imply that the $h+1$ limits are equal, and thus we  obtain
$$
n^{-1}\sum_{t=1+h}^{n}|r_t|^\delta|r_{t-h}|^\delta \stackrel{a.s.}{\longrightarrow} E^2(|r_t|^\delta|a_t=1)\int_{0}^{1}g^2(s)ds.
$$
Gathering the above facts, and using some elementary computations we have
\begin{equation}\label{unique}
n^{-1}\!\sum_{t=1+h}^{n}\!\! \left(\!|r_t|^\delta\! - \bar{r}^{(\delta)}   \right)\!\! \left(\!|r_{t-h}|^\delta\! -\bar{r}^{(\delta)} \right)\!
\stackrel{a.s.}{\longrightarrow}\!E^2(|r_t|^\delta|a_t=1)\! \left[\int_{0}^{1}\!\!g^2(s)ds-\left(\int_{0}^{1}\!\! g(s)ds\right)^2\right]\!.
\end{equation}
By Cauchy-Schwarz inequality, the limit in the last display is positive and could be zero only if $g(\cdot)$ is almost everywhere constant.
For the denominator of $\hat{\rho}^{(\delta)}(h)$, similarly  we obtain
$$n^{-1}\sum_{t=1}^{n}\!\left(\!|r_t|^\delta\! -\bar{r}^{(\delta)} \right)^2\stackrel{a.s.}{\longrightarrow}
\left[\!E(|r_t|^{2\delta}|a_t=1)\!\int_{0}^{1}\!\!g(s)ds-\left(E(|r_t|^{\delta}|a_t=1)\!\int_{0}^{1}\!g(s)ds\right)^2\right],
$$
and the limit is strictly positive as soon as the $r_t$'s are not all equal. The stated result follows.
\end{proof}

\quad

\begin{proof}[Proof of Proposition \ref{propostu2v}]
The same arguments as in the the proof of Proposition \ref{propostu2} apply.
Here we can write
\begin{equation}\label{bara2}
 \bar{r}^{(\delta)} \stackrel{a.s.}{\longrightarrow} E(|\eta_t|^\delta|a_t=1)\int_{0}^{1}v^\delta (s) g(s)ds,
\end{equation}
and
$$
n^{-1}\sum_{t=1+h}^{n}|r_t|^\delta|r_{t-h}|^\delta \stackrel{a.s.}{\longrightarrow} E^2(|\eta_t|^\delta|a_t=1)\int_{0}^{1}v^{2\delta } (s)g^2(s)ds.
$$
We deduce
\begin{multline}\label{unique2}
n^{-1}\sum_{t=1+h}^{n}\!\! \left(|r_t|^\delta -  \bar{r}^{(\delta)}   \right) \left(|r_{t-h}|^\delta -  \bar{r}^{(\delta)}   \right)\\
\stackrel{a.s.}{\longrightarrow}E^2(|r_t|^\delta|a_t=1) \left[\int_{0}^{1}v^{2\delta }  (s) g^2(s)ds-\left(\int_{0}^{1}v ^{\delta } (s) g(s)ds\right)^2\right]\geq 0 .
\end{multline}
Moreover,
$$
n^{-1}\sum_{t=1}^{n}|r_t|^{2\delta} \stackrel{a.s.}{\longrightarrow}E(|\eta_t|^{2\delta}|a_t=1)\int_{0}^{1}v^ {2\delta }  (s) g(s)ds.
$$
and thus
\begin{multline}\label{unique3}
n^{-1}\sum_{t=1}^{n}\left(|r_t|^\delta - \bar{r}^{(\delta)}  \right)^2\\
\stackrel{a.s.}{\longrightarrow}
\left[E(|\eta_t|^{2\delta}|a_t=1)\int_{0}^{1} v^{2\delta }  (s) g(s)ds-\left(E(|\eta_t|^{\delta}|a_t=1)\int_{0}^{1}v^\delta (s) g(s)ds\right)^2\right] >0.
\end{multline}
\end{proof}

\medskip

\begin{proof}[Proof of Proposition \ref{propostu3}]
As above, under the conditions of Proposition \ref{propostu3},   $(r_t)$ is a sequence of independent variables and the conditional distribution of $r_t$ given  $a_t=1$ does not depend on $t$.
Note that, by a simple  error bound for Riemann sums,
\begin{equation}\label{sqe1}
\bar{a} - \int_{0}^{1}g(s)ds
=O(n^{-1}).
\end{equation}
Similarly, by Assumption \ref{cst-mom}, 
\begin{multline}\label{sqe2}
\bar{r}^{(\delta)}  - \sigma^\delta E(|\eta_t|^\delta\mid a_t=1) \int_{0}^{1} g(s)ds   =   \frac{1}{n}\sum_{t=1}^{n}\!\left\{|r_t|^\delta - E(|r_t|^\delta)\right\} \\ + \sigma^\delta E(|\eta_t|^\delta\mid a_t=1) \left[ \frac{1}{n}\sum_{t=1}^{n}  g(t/n) -  \int_{0}^{1} g(s)ds\right]   \\ =  \frac{1}{n}\sum_{t=1}^{n}\left\{|r_t|^\delta - E(|r_t|^\delta)\right\} + O(n^{-1}).
\end{multline}
Moreover, by simple variance calculation,
\begin{equation}\label{sqe2b}
 \frac{1}{n}\sum_{t=1}^{n}\left\{|r_t|^\delta - E(|r_t|^\delta)\right\}= O_p(n^{-1/2}) .
\end{equation}
Since, for any $x_0,y_0$ with $y_0>0$, and any $x,y$ close to $x_0$ and $y_0$ respectively,
$$
\frac{x}{y} = \frac{x_0}{y_0} + \frac{1}{y_0}(x-x_0) - \frac{x_0}{y^2_0}(y-y_0) + o(| x-x_0 | +| y-y_0 |  ) ,
$$
we deduce
\begin{multline}\label{sqe2c}
P(a_t=1)\frac{\bar{r}^{(\delta)} }{\bar{a}} = P(a_t=1)\frac{\sigma^\delta E(|\eta_t|^\delta\mid a_t=1)  \int_{0}^{1}g(s)ds }{\int_{0}^{1}g(s)ds  }+O_p(n^{-1/2}) \\ = E(|r_t|^\delta) +O_p(n^{-1/2}) .
\end{multline}
From this we obtain
\begin{eqnarray*}
n^{\frac{1}{2}}\hat{\gamma}_{ns}^{(\delta)} (h)=n^{-\frac{1}{2}}\sum_{t=1+h}^{n}
\left(|r_t|^\delta-E(|r_t|^\delta)\right)\left(|r_{t-h}|^\delta-E(|r_{t-h}|^\delta)\right)+o_p(1).
\end{eqnarray*}
Next, define the $m$-dimensional vector $Z_t$, with $h$ components $$Z_{ht}=\left(|r_t|^\delta-E(|r_t|^\delta)\right)\left(|r_{t-h}|^\delta-E(|r_{t-h}|^\delta)\right).$$
It is clear that any linear combination $\lambda'Z_t$ fulfills the Lindeberg condition. In addition, $\lambda'Z_t$ is $m$-dependent, so that using the LLN for $L^1$-mixingales of Andrews (1988), we write
$$
n^{-1}\sum_{t=1+h}^{n}E((\lambda'Z_t)^2|\mathcal{F}_{t-1})\stackrel{p}{\longrightarrow}\varpi>0.
$$
Then, from the CLT for   martingale difference sequences (see Theorem A.3 in Francq and Zako\"{\i}an (2019)), and using the Cramer-Wold device, we obtain
\begin{equation}\label{ANproof}
n^{-\frac{1}{2}}\sum_{t=1+h}^{n}Z_t\stackrel{d}{\longrightarrow}\mathcal{N}(0,\tilde{\varsigma}I_m).
\end{equation}
Note that the covariance matrix is diagonal from our independence assumption. The diagonal components correspond to the asymptotic variance of
$$
\tilde{\varsigma}=Var_{as}\left(n^{-\frac{1}{2}}\sum_{t=1+h}^{n}Z_{ht}\right).
$$
By some tedious computations, and using again the independence assumption,  we obtain
\begin{multline}\label{sigtilde}
\tilde{\varsigma}=E\left\{|r_t|^{2\delta}|a_t=1\right\}^2\left(\int_{0}^{1}g^2(s)ds\right)\\
-2E\left\{|r_t|^{2\delta}|a_t=1\right\}E\left\{|r_t|^\delta|a_t=1\right\}^2\left(\int_{0}^{1}g^3(s)ds\right)\\
+E\left\{|r_t|^\delta|a_t=1\right\}^4\left(\int_{0}^{1}g^4(s)ds\right).
\end{multline}

On the other hand, as above we have
$$
n^{-1}\sum_{t=1}^{n}\left(|r_t|^\delta-\frac{\bar{r}_t^{(\delta)}}{\bar{a}}P(a_t=1)\right)^2=
n^{-1}\sum_{t=1}^{n}\left(|r_t|^\delta-E\left(|r_t|^\delta\right)\right)^2+o_p(1).
$$
Using the Kolmogorov SLLN, we obtain
\begin{equation}\label{pproof}
n^{-1}\sum_{t=1}^{n}\left(|r_t|^\delta-E\left(|r_t|^\delta\right)\right)^2\stackrel{a.s.}{\longrightarrow}
n^{-1}\sum_{t=1}^{n}E\left\{\left(|r_t|^\delta-E\left(|r_t|^\delta\right)\right)^2\right\}.
\end{equation}
By easy computations, 
\begin{multline}\label{sigcheck}
\left[n^{-1}\sum_{t=1}^{n}E\left\{\left(|r_t|^\delta-E\left(|r_t|^\delta\right)\right)^2\right\}\right]^2\\
\hspace{-6cm} =E\left\{|r_t|^{2\delta}|a_t=1\right\}^2\left(\int_{0}^{1}g(s)ds\right)^2\\
\hspace{1cm} - 2E\left\{|r_t|^{2\delta}|a_t=1\right\}E\left\{|r_t|^\delta|a_t=1\right\}^2\left(\int_{0}^{1}g^2(s)ds\right)\left(\int_{0}^{1}g(s)ds\right)\\
+E\left\{|r_t|^\delta|a_t=1\right\}^4\left(\int_{0}^{1}g^2(s)ds\right)^2+o(1)
\\  =:\dot{\varsigma}+o(1).
\end{multline}
The desired result follows from \eqref{ANproof}, \eqref{pproof} and the Slutsky Lemma.
\end{proof}

\medskip

\begin{proof}[Proof of Proposition \ref{propostu4}]
As before, we begin with the numerator. Under the assumptions of Proposition \ref{propostu4}, the sequence $|r_t|^\delta$ is a $L^2$-mixingale (see the arguments in Davidson (1994), Example 16.2). Then, using the LLN of Andrews (1988) for $L^1$-mixingales, we write
$$\bar{r}^{(\delta)} =n^{-1}\sum_{t=1}^{n}|r_t|^\delta\stackrel{p}{\longrightarrow}n^{-1}\sum_{t=1}^{n}E\left(|r_t|^\delta\right).$$
From \eqref{error_vol} and \eqref{vol_vol}, it is clear that $E\left(|r_t|^\delta|a_t=1\right)$ is constant. Hence, we obtain
\begin{equation}\label{levadura}
\frac{\bar{r}^{(\delta)}}{\bar{a}}P(a_t=1)\stackrel{p}{\longrightarrow}E\left(|r_t|^\delta\right).
\end{equation}
From \eqref{levadura}, and using again the LLN of Andrews (1988), it is easy to see that
$$\hat{\gamma}_{ns}^\delta(h)\stackrel{p}{\longrightarrow}n^{-1}\sum_{t=1+h}^{n}
E\left\{\left(|r_t|^\delta-E(|r_t|^\delta)\right)\left(|r_{t-h}|^\delta-E(|r_{t-h}|^\delta)\right)\right\}.$$
In view of our Lipschitz condition with a finite number of breaks in Assumption \ref{tv-prob}, the AR structure of $(|r_t|^\delta)$, and the independence of $(a_t)$ and $(\widetilde{r}_t)$, we then have
\begin{equation}\label{blabla1}
\hat{\gamma}_{ns}^{(\delta)}(h)\stackrel{p}{\longrightarrow}n^{-1}\sum_{t=1+h}^{n}
\theta^hE\left\{\left(|r_{t-h}|^\delta-E(|r_{t-h}|^\delta)\right)^2\right\}+O(n^{-1}).
\end{equation}
Some computations give
\begin{equation}\label{blabla2}
\hat{\gamma}_{ns}^{(\delta)} (h)\stackrel{p}{\longrightarrow}
\theta^h\left[E\left(|r_t|^{2\delta}|a_t=1\right)\int_{0}^{1}g(s)ds-E\left(|r_t|^{\delta}|a_t=1\right)^2\int_{0}^{1}g^2(s)ds\right]\geq0.
\end{equation}
For the denominator, similarly to \eqref{blabla1} and \eqref{blabla2}, we write
\begin{equation*}
\hat{\gamma}_{ns}^ {(\delta)} (0)\stackrel{p}{\longrightarrow}
E\left(|r_t|^{2\delta}|a_t=1\right)\int_{0}^{1}g(s)ds-E\left(|r_t|^{\delta}|a_t=1\right)^2\int_{0}^{1}g^2(s)ds\geq0,
\end{equation*}
and hence we obtain the desired result.
\end{proof}

\medskip

\begin{proof}[Proof of Proposition \ref{propostu5}]
From the Kolmogorov SLLN, and using the same rescaling arguments as above, we have
$$
\frac{\bar{r}^{(\delta)}}{\bar{a}}\stackrel{a.s.}{\longrightarrow}
\frac{\int_{0}^{1}v^\delta(s)g(s)ds}{\int_{0}^{1}g(s)ds}E\left(|\eta_t|^\delta|a_t=1\right).
$$
Then, we clearly obtain
\begin{eqnarray*}
\hat{\gamma}_{ns}^{(\delta)} (h)&=&n^{-1}\sum_{t=1+h}^{n}
E\left\{\left(|r_t|^\delta-\frac{\int_{0}^{1}v^\delta(s)g(s)ds}{\int_{0}^{1}g(s)ds}E\left(|\eta_t|^\delta|a_t=1\right)P(a_{t}=1)\right)\right.\\&\times&
\left.\left(|r_{t-h}|^\delta-\frac{\int_{0}^{1}v^\delta(s)g(s)ds}{\int_{0}^{1}g(s)ds}E\left(|\eta_t|^\delta|a_t=1\right)P(a_{t-h}=1)\right)\right\}
+o_p(1).
\end{eqnarray*}
In the above sum, the terms are $h$-dependent. Then, again using the LLN of Andrews (1988), by some tedious computations,
\begin{eqnarray*}
\hat{\gamma}_{ns}^{(\delta)} (h)&\stackrel{p}{\longrightarrow}&E\left(|\eta_t|^\delta|a_t=1\right)^2\left[\left(\int_{0}^{1}v^{2\delta}(s)g^2(s)ds\right)
\right.\\&-&2\left(\int_{0}^{1}v^{\delta}(s)g^2(s)ds\right)\frac{\int_{0}^{1}v^{\delta}(s)g(s)ds}{\int_{0}^{1}g(s)ds}
\\&+&\left.\left(\frac{\int_{0}^{1}v^{\delta}(s)g(s)ds}{\int_{0}^{1}g(s)ds}\right)^2\int_{0}^{1}g^2(s)ds\right].
\end{eqnarray*}
Using the inequality
$$
\int_{0}^{1}v^{2\delta}(s)G_2(s)ds \geq \left(\int_{0}^{1}v^{\delta}(s)G_2(s)ds\right)^2 \quad\text{with}\quad
G_2(s)= \frac{g^2(s)}{\int_{0}^{1}g^2(u)du},
$$
we deduce that the limit $\hat{\gamma}_{ns}^{(\delta)} (h)$ is larger or equal to
$$
E\left(|\eta_t|^\delta|a_t=1\right)^2 \left(\int_{0}^{1}g^2(s)ds\right)^{-2}\\
\times \left\{  \int_{0}^{1}v^{\delta}(s)G_2(s)ds - \int_{0}^{1}v^{\delta}(s)G_1(s)ds\right\} ^2,
$$
where $G_1(s)= g(s)\left[\int_{0}^{1}g(u)du\right]^{-1}$. Thus, it is easy to see that the limit $\hat{\gamma}_{ns}^{(\delta)} (h)$ is non negative, and could  be equal to zero if and only if both functions $g(\cdot)$ and $v(\cdot)$ are constant.

On the other hand, for the denominator, using similar arguments as above we write
\begin{eqnarray*}
\hat{\gamma}_{ns}^{(\delta)}(0) &\stackrel{p}{\longrightarrow}&E\left(|\eta_t|^{2\delta}|a_t=1\right)\int_{0}^{1}v^{2\delta}(s)g(s)ds
\\&+&E\left(|\eta_t|^\delta|a_t=1\right)^2\left[-2\left(\int_{0}^{1}v^{\delta}(s)g^2(s)ds\right)\frac{\int_{0}^{1}v^{\delta}(s)g(s)ds}{\int_{0}^{1}g(s)ds}
\right.\\&+&\left.\left(\frac{\int_{0}^{1}v^{\delta}(s)g(s)ds}{\int_{0}^{1}g(s)ds}\right)^2\int_{0}^{1}g^2(s)ds\right],
\end{eqnarray*}
so the stated result follows.
\end{proof}

\begin{proof}[Proof of Proposition \ref{propostu4v}]

Like in  the proof of Proposition \ref{propostu3},
we define the $m$-dimensional vector $Z_t$, with $h$ components $$Z_{ht}=\left(|r_t|^\delta-E(|r_t|^\delta)\right)\left(|r_{t-h}|^\delta-E(|r_{t-h}|^\delta)\right),\qquad 1\leq h\leq m.$$
Clearly, the convergence \eqref{ANproof} is still guaranteed by the  CLT for martingale difference sequences. The asymptotic covariance is diagonal, using the independence property. Using some algebra we obtain
\begin{eqnarray}
\tilde{\zeta}&=&E\left\{|\eta_t|^{2\delta}|a_t=1\right\}^2\left(\int_{0}^{1}v^{4\delta}(s)g^2(s)ds\right)\nonumber\\
&&-2E\left\{|\eta_t|^{2\delta}|a_t=1\right\}E\left\{|\eta_t|^\delta|a_t=1\right\}^2\left(\int_{0}^{1}v^{4\delta}(s)g^3(s)ds\right)\nonumber\\
&&+E\left\{|\eta_t|^\delta|a_t=1\right\}^4\left(\int_{0}^{1}v^{4\delta}(s)g^4(s)ds\right).\label{zetanum}
\end{eqnarray}
For $\hat{\gamma}_{ns,\sigma}^{(\delta)} (0)$, we note that
\begin{eqnarray}
&&\left[n^{-1}\sum_{t=1}^{n}\left(|r_t|^\delta-E\left(|r_t|^\delta\right)\right)^2\right]^2\nonumber\\
&=&E\left\{|\eta_t|^{2\delta}|a_t=1\right\}^2\left(\int_{0}^{1}v^{2\delta}(s)g(s)ds\right)^2\nonumber\\&-&
2E\left\{|\eta_t|^{2\delta}|a_t=1\right\}E\left\{|\eta_t|^\delta|a_t=1\right\}^2\left(\int_{0}^{1}v^{2\delta}(s)g^2(s)ds\right)\left(\int_{0}^{1}v^{2\delta}(s)g(s)ds\right)\nonumber\\
&+&E\left\{|\eta_t|^\delta|a_t=1\right\}^4\left(\int_{0}^{1}v^{2\delta}(s)g^2(s)ds\right)^2+o_p(1)=:\dot{\zeta}+o_p(1).\label{zetaden}
\end{eqnarray}
Hence, the result follows from the Slutsky Lemma.
\end{proof}

\medskip

\begin{proof}[Proof of Proposition \ref{propostu4-bis}] The proof of Proposition \ref{propostu4-bis} is similar to that of Proposition \ref{propostu4}, and therefore omitted.
\end{proof}

\medskip

\begin{proof}[Proof of Proposition \ref{propostu5v}] We sketch the lines of this technical proof. See also Patilea and Ra\"{i}ssi (2014) for similar arguments and additional technical details. For justifying \eqref{uty2}, it suffices to prove
\begin{equation}\label{equiv_g}
\max_{0\leq h\leq m} \sup_{ 
b_\tau\in\mathcal B_n} n^{1/2} \left| \widetilde{\gamma}_{ns,\sigma}^{(\delta)} (h) -  \widehat{\gamma}_{ns,\sigma}^{(\delta)} (h)\right| = o_p(1).
\end{equation}
We can write
\begin{multline}\label{dsq1}
\widehat{E(|r_{t}| ^{\delta} )}  =  E(|r_{t}| ^{\delta}  )\\+ \sum_{j=1}^{n} w_{tj}(b_\tau) \left\{ E(|r_{j}| ^{\delta})- E(|r_{t}| ^{\delta}  )\right\} + \sum_{j=1}^{n} w_{tj}(b_\tau) \left\{ |r_j|^{\delta} - E(|r_{j}| ^{\delta})\right\}\\ +E(|r_{t}| ^{\delta}  )\left[ \sum_{j=1}^{n} w_{tj}(b_\tau) - 1\right]=: E(|r_{t}| ^{\delta}  ) + B_t+V_t+W_t.
\end{multline}
By our Lipschitz condition for  $v(\cdot)$ and $g(\cdot)$, and the fact that $E\left(|\eta_t|^\delta|a_t=1\right)$ does not depend on $t$,
$$
 \left| E(|r_{j}| ^{\delta})- E(|r_{t}| ^{\delta}  )\right| = E\left(|\eta_t|^\delta|a_t=1\right)\left| v^\delta(j/n)g(j/n) -  v^\delta(t/n)g(t/n)  )\right|\leq C_1 |j-t|/n,
$$
where $C_1$ is some constant. By Lemma 6.1 of Patilea and Ra\"{i}ssi (2014),
$$
|B_t| + |W_t| \leq \frac{c_{max}}{c_{min}} C_1 b_n \left\{1+ \frac{C_2}{nb_n} \right\} + E(|r_{t}| ^{\delta}  )\frac{C_2}{nb_n},
$$
for some constant $C_2$. Note that $B_t$ and $W_t$ are not random. On the other hand, for any $\gamma >0$,  by the results of Sherman (1994),
$V_t = O_p(n^{-1/2} b^{-1-\gamma/2})$ uniformly with respect to $b_\tau\in\mathcal B_n$ and $t$. Thus, replacing
$\widehat{E(|r_{t}| ^{\delta} )} $ by $E(|r_{t}| ^{\delta}  ) + B_t+V_t+W_t$ in the definition of $\tilde{\gamma}_{ns,\sigma}^{(\delta)}(h)$, after some calculations, we obtain \eqref{equiv_g}.

The arguments for justifying  \eqref{uty1} are quite similar and are hence omitted. \end{proof}

\medskip

\begin{proof}[Proof of Proposition \ref{propostu5-bis}]
The justification of this result is a direct consequence of Theorem \ref{propostu5v} and is thus omitted.
\end{proof}

\newpage

\section*{Tables and Figures}

\begin{table}[hh]\!\!\!\!\!\!\!\!\!\!
\begin{center}
\caption{\small{The frequencies (in \%) of adaptive and classical autocorrelations outside their respective nominal 95\% confidence bands, obtained from $R=5000$ independent replications. The absolute value of the returns are considered ($\delta=1$). The constant unconditional variance and probability case (a)-(1), with no second order dynamics. In all the experiments, the confidence intervals for the powers RPV and RP autocorrelations are built using bootstrap replications. Also, the bandwidths for estimating the adaptive powers correlations minimize the CV criteria given in \eqref{cv-px} and \eqref{cv-vx}.}}
\begin{tabular}{|c|c|c|c|c|c|c||c|c|c|}
\cline{2-10}
 \multicolumn{1}{c|}{ }& lags & 1 & 2 & 3 & 4 & 5 & 20 & 40 & 60 \\
\hline
\multirow{4}{*}{\begin{turn}{90}Classical\end{turn}}
& $n=100$ & 4.12 &4.20 &4.20 &4.66 &4.40 &2.30 &1.14 &0.28\\ \cline{2-10}
& $n=200$ & 4.52 &4.46 &4.64 &4.12 &4.76 &3.78 &2.70 &1.68\\ \cline{2-10}
& $n=400$ & 5.36 &4.30 &4.56 &4.90 &4.56 &4.22 &3.36 &2.80\\ \cline{2-10}
& $n=800$ & 5.20 &5.04 &4.70 &4.64 &4.90 &5.02 &4.22 &4.36 \\ \hline\hline
\multirow{4}{*}{\begin{turn}{90}RP\end{turn}}
& $n=100$ & 6.04 &6.16 &5.98 &6.70 &6.48 &5.14 &4.56 &4.08\\ \cline{2-10}
& $n=200$ & 6.06 &5.80 &5.64 &5.62 &6.16 &5.36 &5.48 &4.98\\ \cline{2-10}
& $n=400$ & 5.76 &4.96 &5.82 &5.34 &5.62 &5.08 &5.14 &4.82 \\ \cline{2-10}
& $n=800$ & 5.18 &5.12 &4.98 &4.94 &5.54 &6.08 &5.14 &5.88\\ \hline\hline
\multirow{4}{*}{\begin{turn}{90}RPV\end{turn}}
& $n=100$ &  6.24 &6.28 &6.34 &7.14 &6.76 &5.18 &4.54 &3.84\\ \cline{2-10}
& $n=200$ &  6.04 &6.10 &5.88 &5.60 &6.52 &5.58 &5.82 &5.02\\ \cline{2-10}
& $n=400$ &  6.04 &5.00 &5.78 &5.38 &5.80 &5.20 &5.30 &5.00\\ \cline{2-10}
& $n=800$ &  5.42 &5.24 &5.06 &5.02 &5.58 &6.04 &5.30 &6.00\\ \hline
\end{tabular}
\label{conf-bound-homo}
\end{center}
\end{table}

\begin{table}[hh]\!\!\!\!\!\!\!\!\!\!
\begin{center}
\caption{\small{The frequencies (in \%) of adaptive and classical autocorrelations outside their respective nominal 95\% confidence bands ($\delta=1$), obtained from $R=5000$ independent replications. The daily non zero returns probability has a quick shift from a regime to another, with a constant variance and no second order dynamics (the (a)-(2) case).}}
\begin{tabular}{|c|c|c|c|c|c|c||c|c|c|}
\cline{2-10}
 \multicolumn{1}{c|}{ }& lags & 1 & 2 & 3 & 4 & 5 & 20 & 40 & 60 \\
\hline
\multirow{4}{*}{\begin{turn}{90}Classical\end{turn}}
& $n=100$ &  46.04 &44.44 &42.30 &40.50 &40.68 &8.64 &0.64 &1.38\\ \cline{2-10}
& $n=200$ &  71.68 &70.12 &71.44 &70.04 &69.76 &53.64 &17.60  &1.52\\ \cline{2-10}
& $n=400$ &  94.70 &94.64 &94.74 &94.54 &93.66 &91.90 &83.10 &67.36\\ \cline{2-10}
& $n=800$ &  99.88 &99.80 &99.90 &99.92 &99.88 &99.7 &99.8 &99.3\\ \hline\hline
\multirow{4}{*}{\begin{turn}{90}RP\end{turn}}
& $n=100$ &  6.20 &5.78 &5.62 &6.16 &5.04 &4.14 &3.44 &1.82\\ \cline{2-10}
& $n=200$ &  5.88 &6.24 &5.92 &6.04 &5.64 &4.66 &4.66 &5.40\\ \cline{2-10}
& $n=400$ &  5.70 &5.36 &5.24 &5.28 &5.70 &5.02 &4.88 &4.60\\ \cline{2-10}
& $n=800$ &  5.58 &5.50 &5.26 &5.64 &5.32 &6.06 &5.28 &5.80\\ \hline\hline
\multirow{4}{*}{\begin{turn}{90}RPV\end{turn}}
& $n=100$ &  6.70 &6.36 &7.10 &7.86 &6.34 &4.42 &3.42 &1.78\\ \cline{2-10}
& $n=200$ &  6.70 &6.98 &6.96 &6.60 &6.50 &4.72 &4.52 &5.44\\ \cline{2-10}
& $n=400$ &  6.12 &6.34 &5.92 &6.20 &6.20 &5.26 &5.04 &4.48\\ \cline{2-10}
& $n=800$ &  5.86 &5.60 &5.70 &5.80 &6.10 &6.44 &5.54 &5.68\\ \hline
\end{tabular}
\label{conf-bound-shift1}
\end{center}
\end{table}

\begin{table}[hh]\!\!\!\!\!\!\!\!\!\!
\begin{center}
\caption{\small{The frequencies (in \%) of adaptive and classical autocorrelations outside their respective nominal 95\% confidence bands ($\delta=1$), obtained from $R=5000$ independent replications. The daily non zero returns probability and unconditional variance have a quick shift from a regime to another, with no second order dynamics (the (c)-(2) case).}}
\begin{tabular}{|c|c|c|c|c|c|c||c|c|c|}
\cline{2-10}
 \multicolumn{1}{c|}{ }& lags & 1 & 2 & 3 & 4 & 5 & 20 & 40 & 60 \\
\hline
\multirow{4}{*}{\begin{turn}{90}Classical\end{turn}}
& $n=100$ &  79.84 &78.88 &78.16 &74.56 &74.68 &20.46  &0.56  &5.58\\ \cline{2-10}
& $n=200$ &  97.38 &97.80 &97.62 &97.36 &97.64 &90.44 &46.00  &1.52\\ \cline{2-10}
& $n=400$ &  99.98  &99.96 &100.00 &100.00 &100.00 &99.96 &99.72 &97.06\\ \cline{2-10}
& $n=800$ &  100.00 &100.00 &100.00 &100.00 &100.00 &100.00 &100.00 &100.00\\ \hline\hline
\multirow{4}{*}{\begin{turn}{90}RP\end{turn}}
& $n=100$ &  4.86 &3.98 &4.30 &4.46 &4.30 &3.76 &3.24 &3.22\\ \cline{2-10}
& $n=200$ &  4.44 &4.40 &4.56 &4.28 &4.40 &4.36 &3.84 &4.78\\ \cline{2-10}
& $n=400$ &  5.44 &4.90 &4.84 &4.84 &4.76 &4.90 &4.74 &4.28\\ \cline{2-10}
& $n=800$ &  6.86 &6.48 &6.38 &6.24 &6.48 &6.74 &6.20 &6.00\\ \hline\hline
\multirow{4}{*}{\begin{turn}{90}RPV\end{turn}}
& $n=100$ &  6.36 &6.24 &6.78 &7.38 &6.30 &4.28 &2.58 &1.10\\ \cline{2-10}
& $n=200$ &  5.88 &6.46 &6.16 &6.06 &6.34 &4.68 &4.38 &4.98\\ \cline{2-10}
& $n=400$ &  5.32 &5.58 &5.50 &5.98 &5.76 &5.26 &5.02 &4.20\\ \cline{2-10}
& $n=800$ &  5.40 &5.52 &5.74 &5.42 &5.54 &6.22 &5.26 &5.64\\ \hline
\end{tabular}
\label{conf-bound-shift}
\end{center}
\end{table}


\begin{table}[hh]\!\!\!\!\!\!\!\!\!\!
\begin{center}
\caption{\small{Empirical size (in \%) of the adaptive and classical portmanteau tests ($\delta=1$, $m=5$ and no second order dynamics), obtained from $R=5000$ independent replications. The nominal asymptotic level of the tests is 5\%. The variance and zero returns probability are constant (the (a)-(1) case), the variance is constant and the zero returns probability is time-varying (the (a)-(2) case), and the variance and probability are both non-constant (the (c)-(2) case).}}
\begin{tabular}{|c|c||c|c|c|c|}
\hline
 \multicolumn{2}{|c||}{sample size} & 100 & 200 & 400 & 800 \\
\hline
\multirow{2}{*}{\begin{turn}{90} (a)-(1)\:\:\:\:\:\:\end{turn}}
& Classical & 4.20 & 5.08 & 4.00 & 4.60\\ \cline{2-6}
& RP & 5.68 & 5.54  & 4.64 & 4.92 \\ \cline{2-6}
& RPV & 5.76 & 5.92 & 4.72 & 4.74 \\ \hline\hline
\multirow{2}{*}{\begin{turn}{90}(a)-(2)\:\:\:\:\:\:\end{turn}}
& Classical & 72.62 & 95.72 & 99.94 & 100.00 \\ \cline{2-6}
& RP & 5.56 & 5.98 & 5.10& 4.88 \\ \cline{2-6}
& RPV & 6.9 & 6.60 & 5.80 & 5.62 \\ \hline\hline
\multirow{2}{*}{\begin{turn}{90}(c)-(2)\:\:\:\:\:\:\end{turn}}
& Classical & 98.70 & 100.0 & 100.0 & 100.0 \\ \cline{2-6}
& RP & 5.4 & 5.5 & 6.94 & 10.18\\ \cline{2-6}
& RPV & 7.94 & 6.68 & 5.84 & 5.66 \\ \hline
\end{tabular}
\label{port-reject}
\end{center}
\end{table}

\begin{table}[hh]\!\!\!\!\!\!\!\!\!\!
\begin{center}
\caption{\small{The Box-Pierce test p-values (in \%) for illiquid Santiago financial market stocks ($m=5$ and $\delta=1$). }}
\begin{tabular}{c|c|c|c|c|c|c|}\cline{2-5}
   & sample size & Classical & RP & RPV \\
  \hline
  \multicolumn{1}{|c|}{{\small Molymet}} & 4896 & 0.00 & 0.00 & \underline{\textbf{0.20}} \\
  \hline
  \multicolumn{1}{|c|}{{\small Las Condes}} & 1958 & \underline{\textbf{0.00}} & 0.00 & 0.55 \\
  \hline
  \multicolumn{1}{|c|}{{\small Cruzados}} & 2578 &0.00 & \underline{\textbf{0.75}} & 6.50 \\
  \hline
  \multicolumn{1}{|c|}{{\small Provida }} & 5179 & 0.00 & \underline{\textbf{0.00}} & 0.00 \\
  \hline
  \multicolumn{1}{|c|}{{\small Lipigas }} & 892 & \underline{\textbf{0.00}} & 0.15 & 6.48 \\
  \hline
  \multicolumn{1}{|c|}{{\small Conchatoro }} & 5188 & 0.00 & \underline{\textbf{0.00}} & 0.00 \\
  \hline
\end{tabular}
\label{stat-test}
\end{center}
\end{table}


\clearpage
\begin{figure}[p!]
\protect \includegraphics{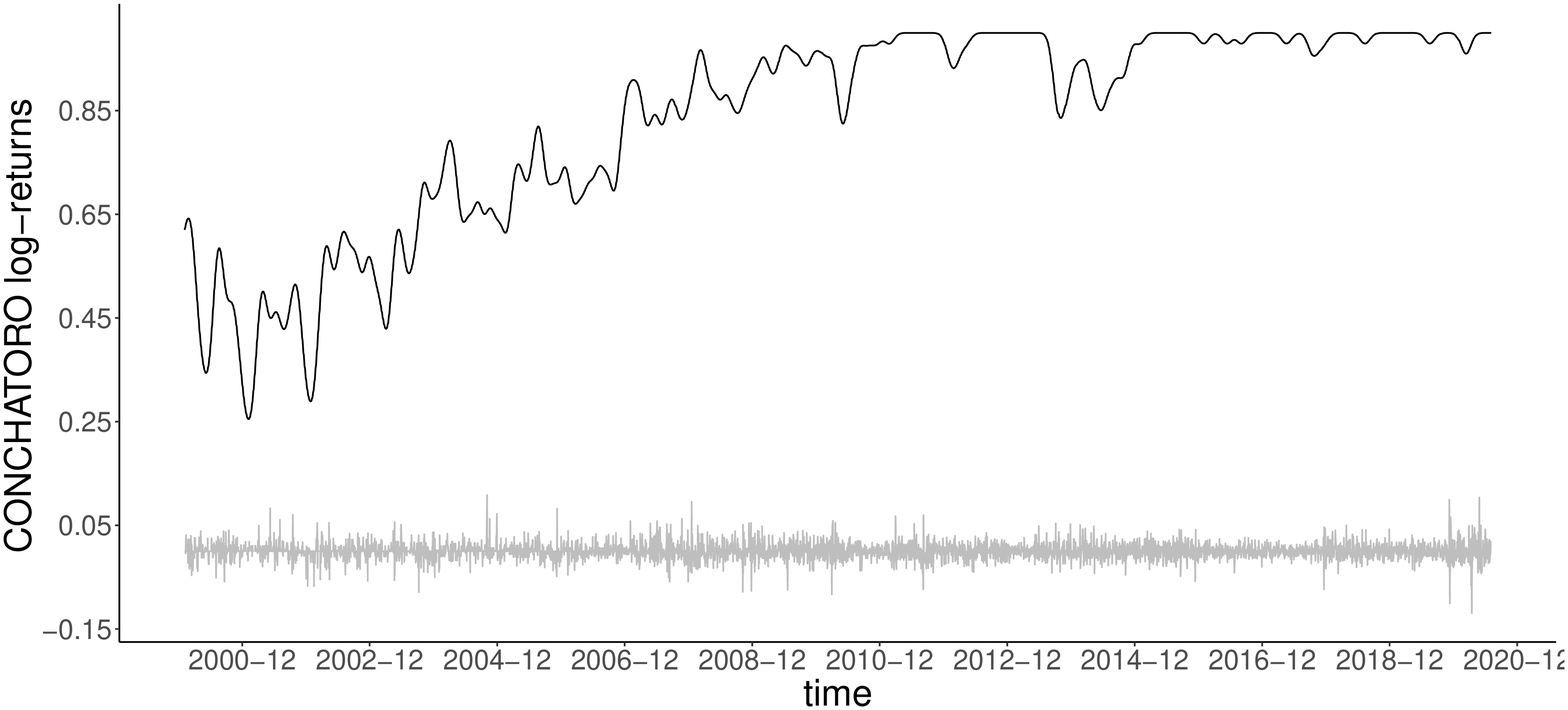}
\protect \includegraphics{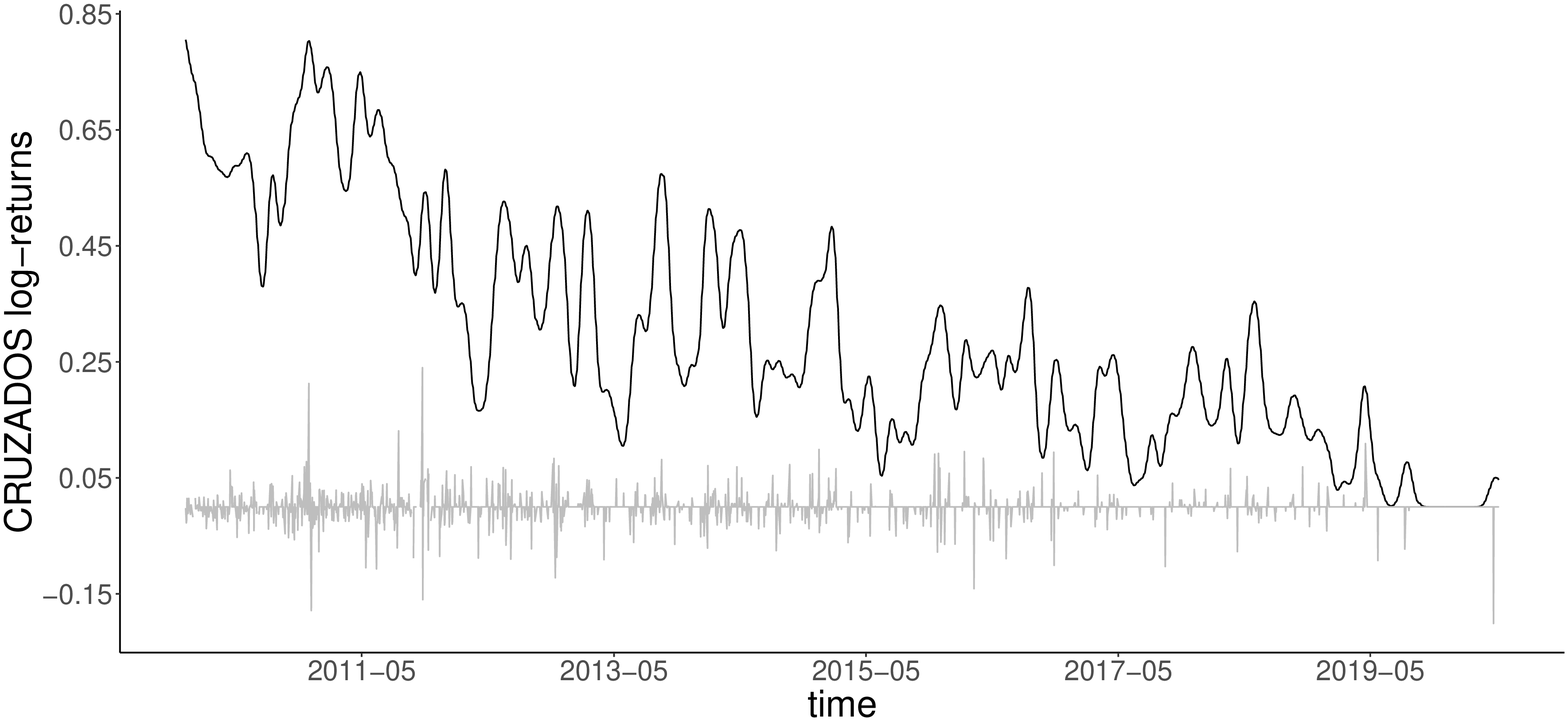}
\protect \includegraphics{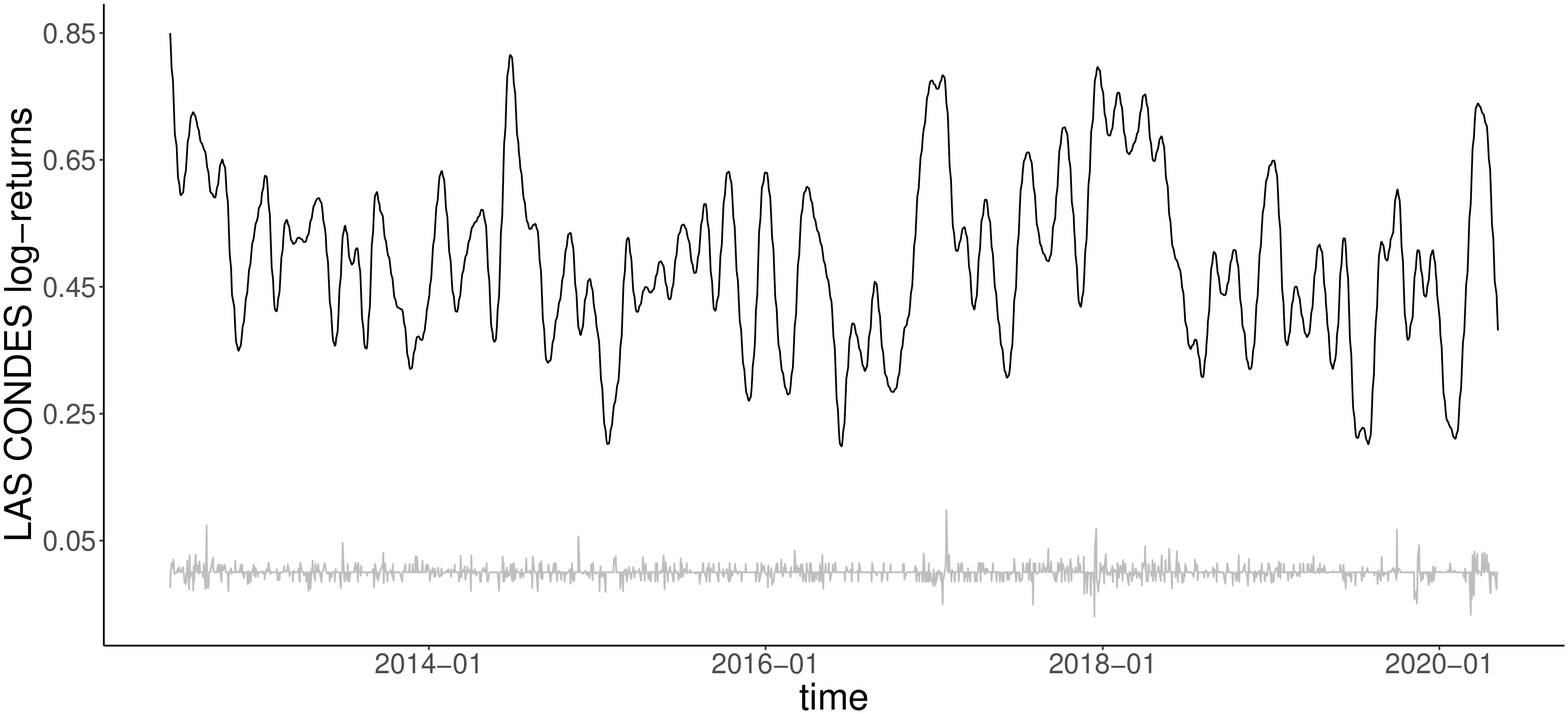}
\protect \includegraphics{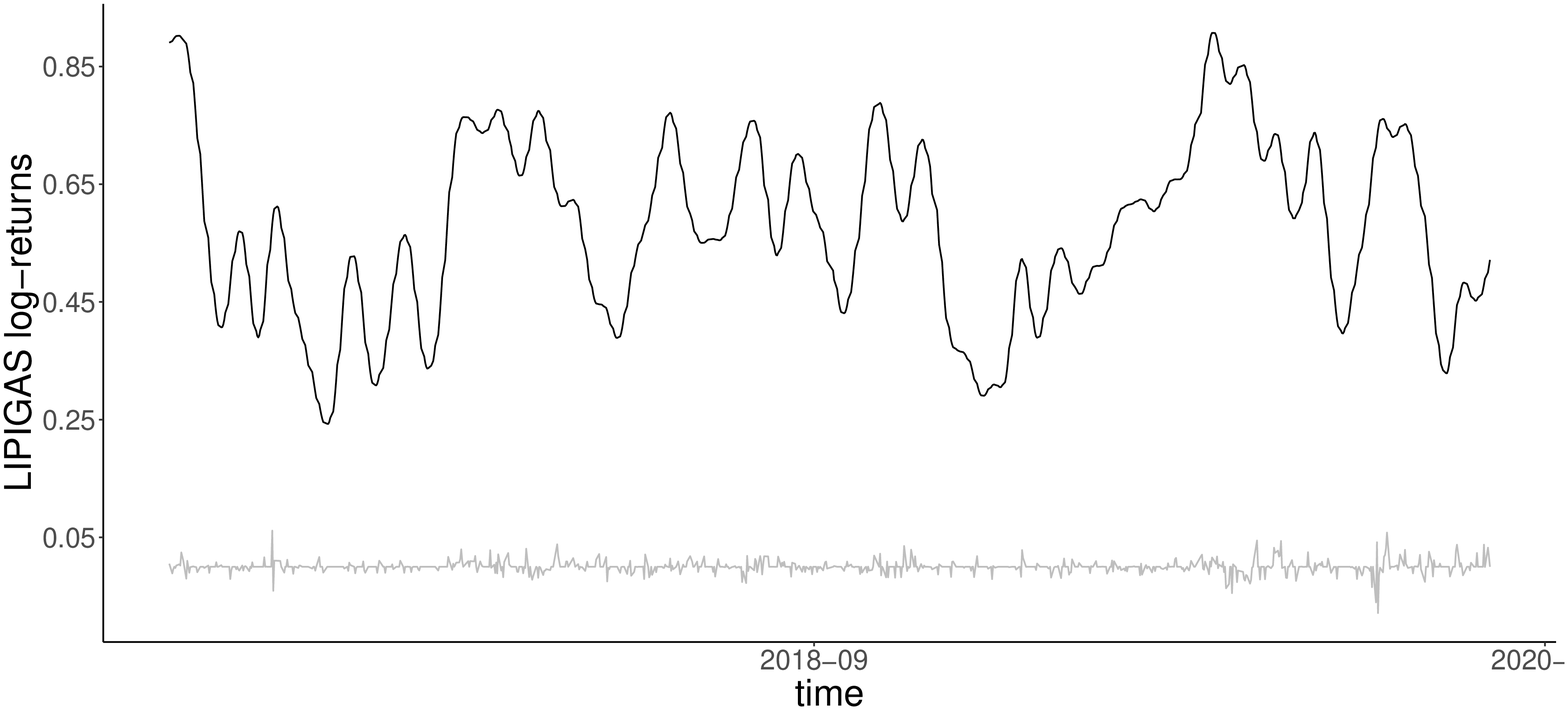}
\protect \includegraphics{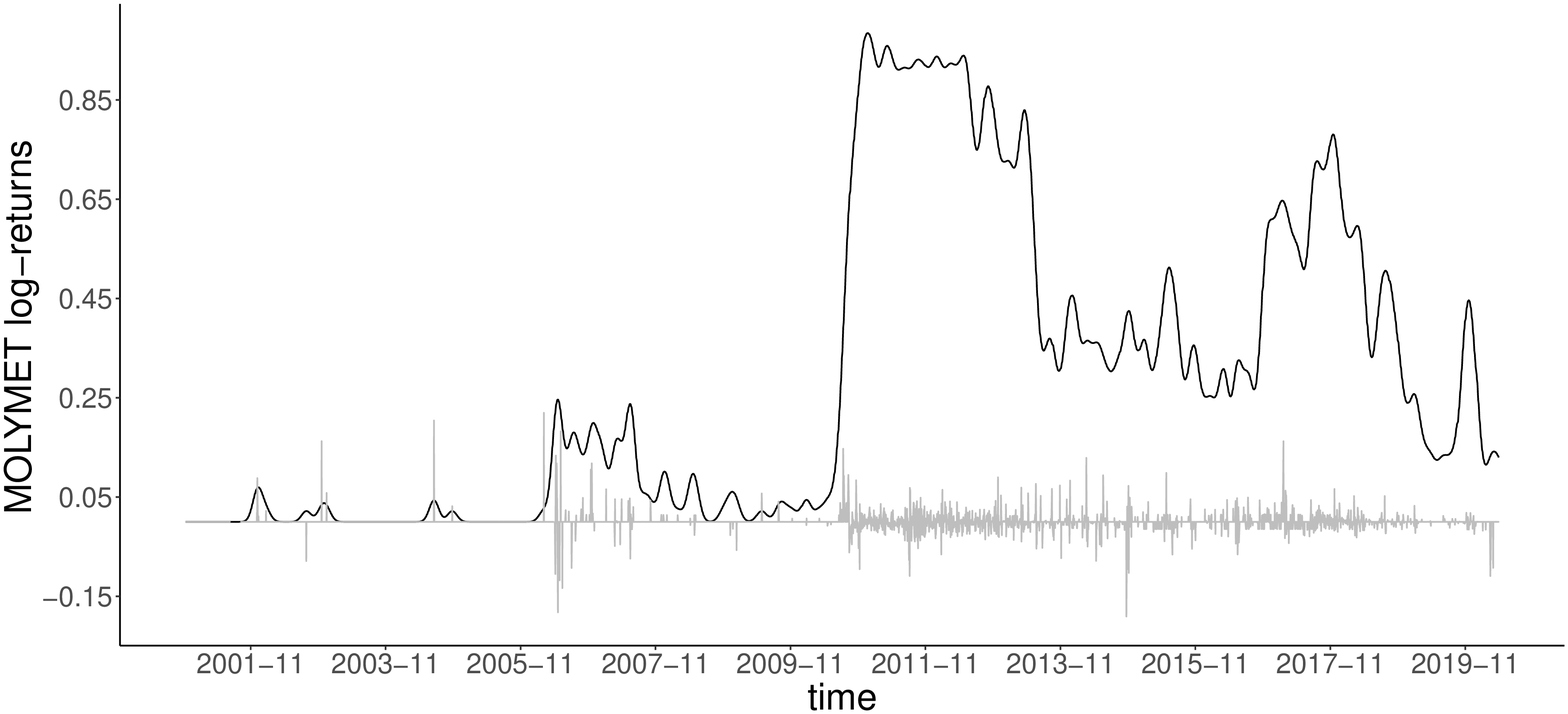}
\protect \includegraphics{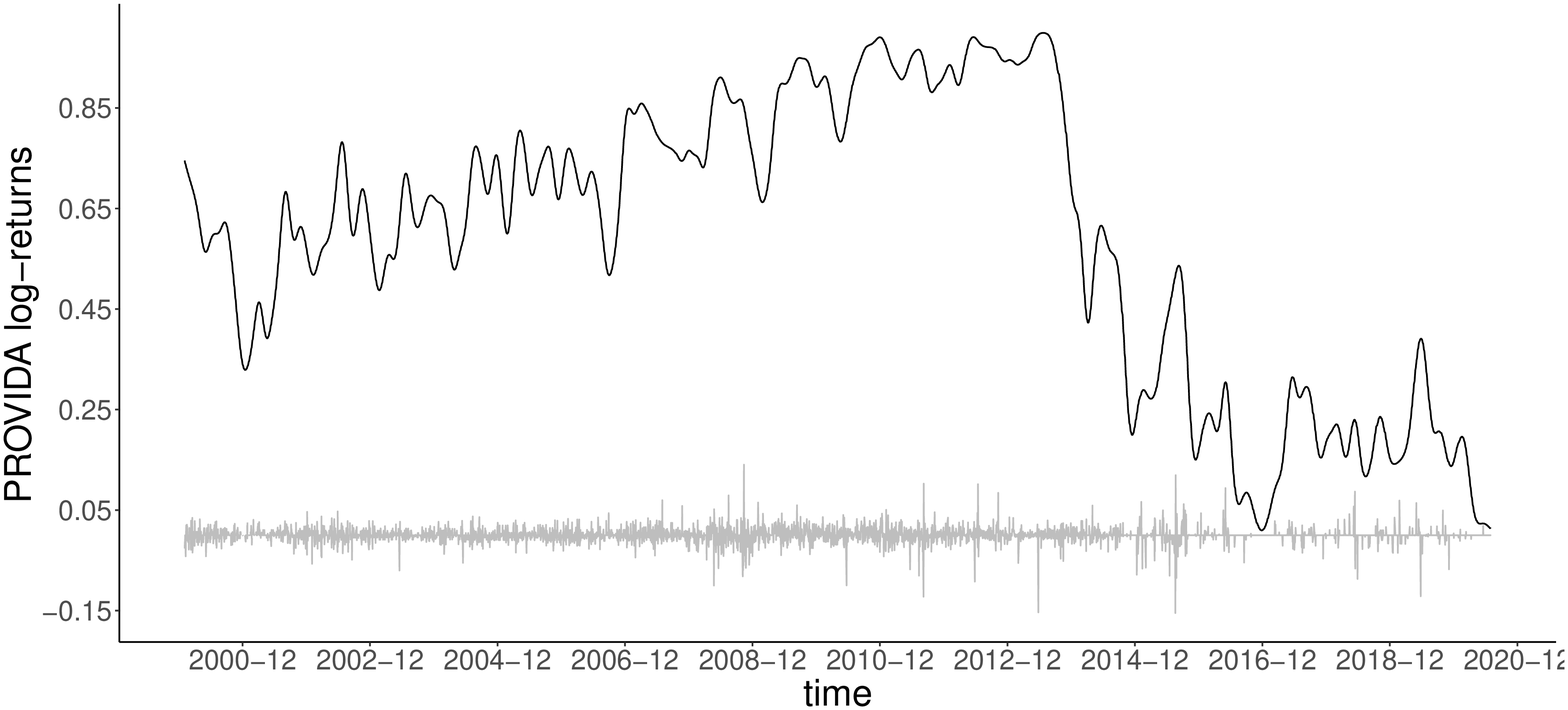}
\caption{\label{ns-stocks}
{\footnotesize The log-returns of different illiquid stocks of the Santiago financial market. The stocks seem to have a non stationary daily zero returns probability. The kernel smoothing estimator of $P(r_t\neq0)$ is displayed in full line. The bandwidth minimizes the CV criterion in (\ref{cv-px}) within a grid a values, and is equal to $b=0.0201$ for the Lipigas stock, and $b=0.0101$ for the Las Condes, Molymet, Conchatoro, Cruzados and Provida stocks. Data source: Yahoo Finance.}}
\end{figure}

\clearpage
\begin{figure}[h]
\begin{center}
 \includegraphics[scale=0.46]{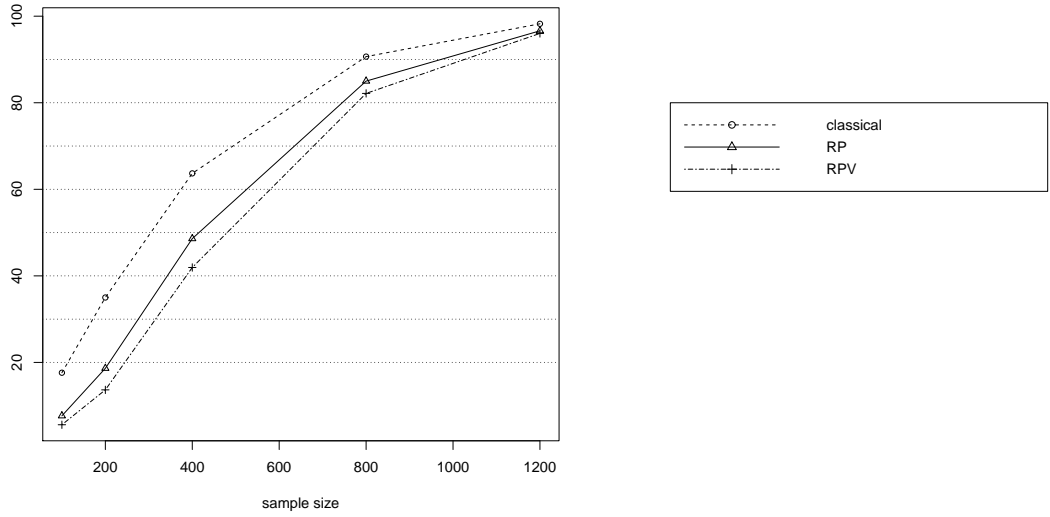}
\end{center}
\caption{The empirical power of the classical, RP and RPV tests. The data are simulated according to the stationary GARCH(1,1) model given in \eqref{garch-mod}.}
\label{fig-index}
\end{figure}

\clearpage
\vspace*{20cm}
\begin{figure}[h]\!\!\!\!\!\!\!\!\!\!
\protect \includegraphics{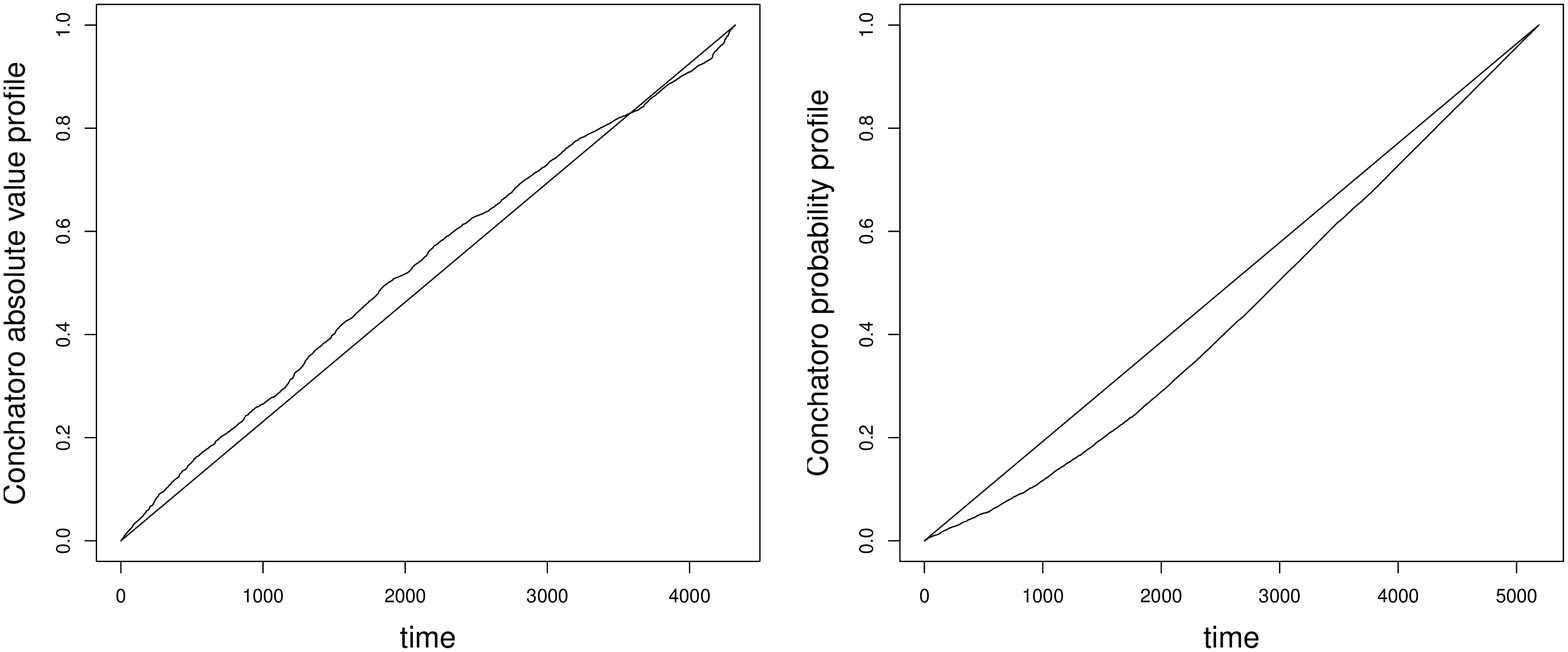}
\protect \includegraphics{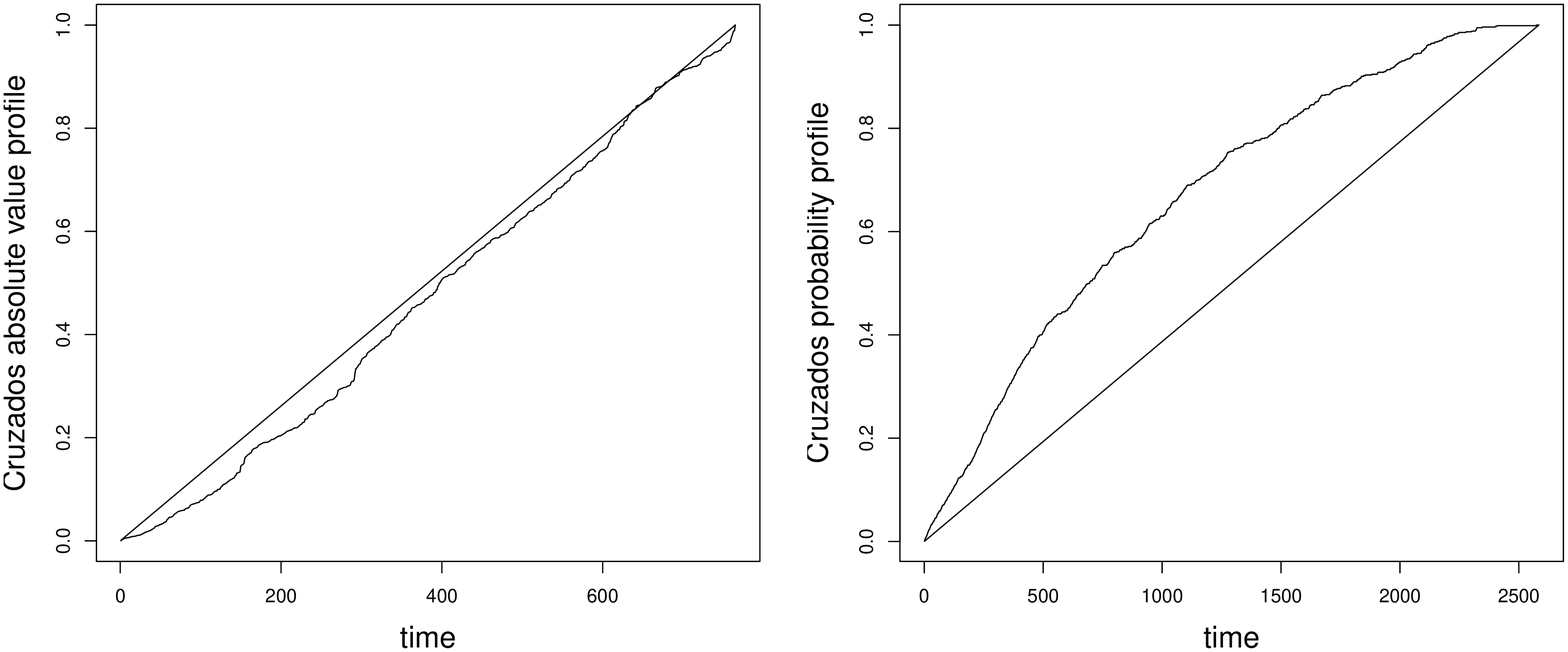}
\protect \includegraphics{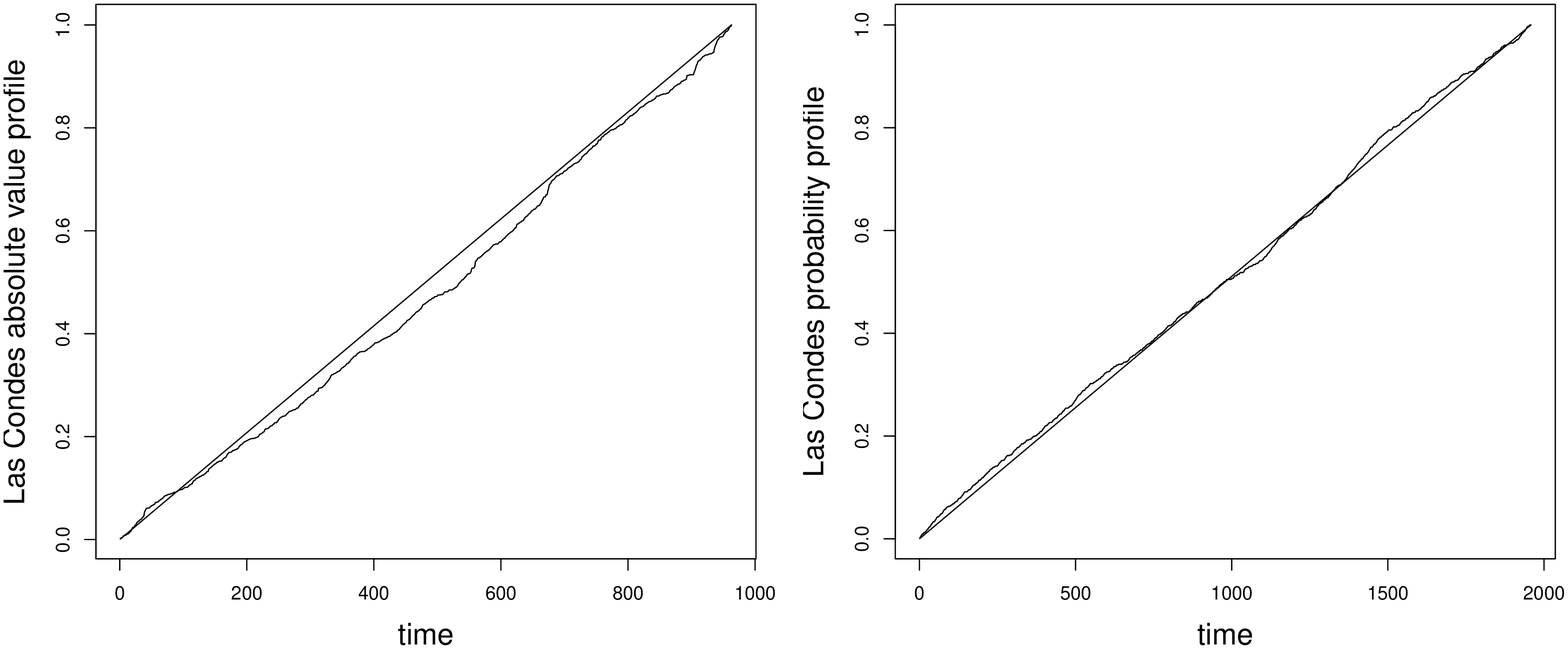}
\protect \includegraphics{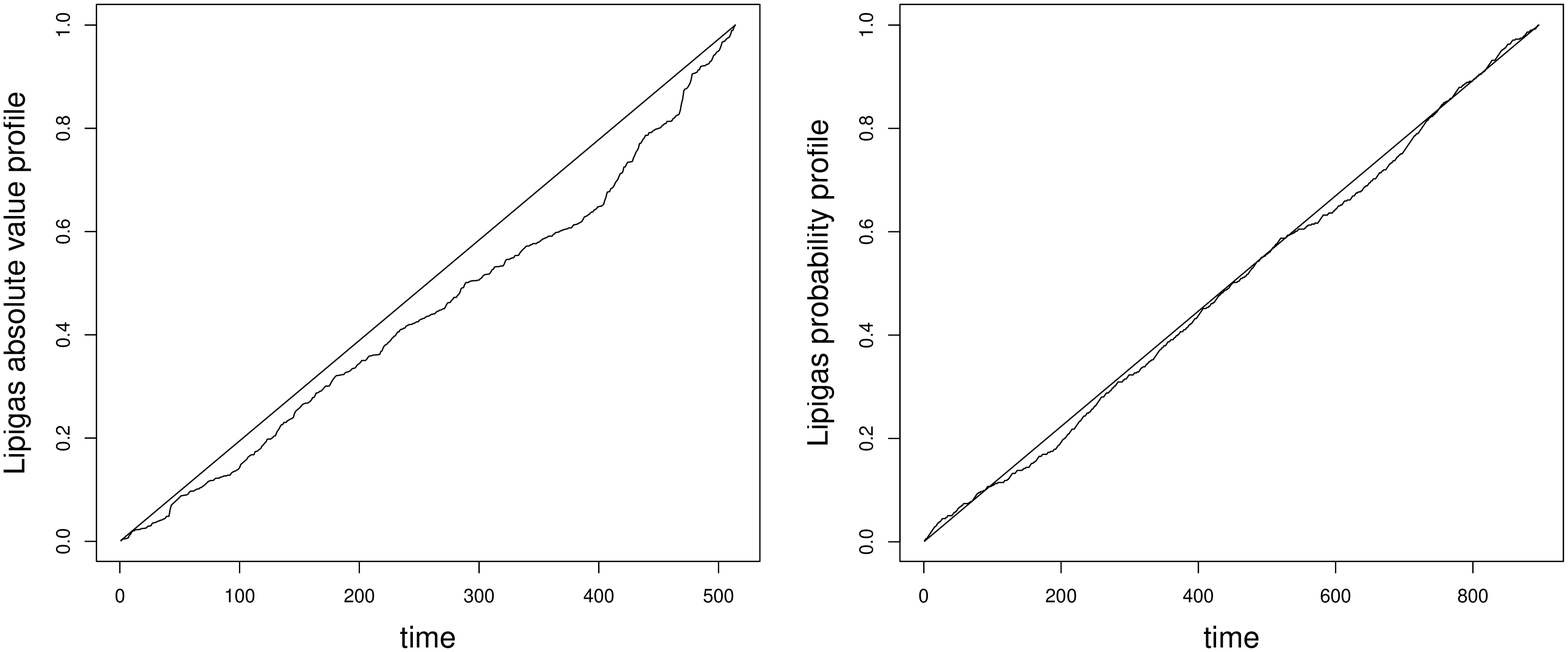}
\protect \includegraphics{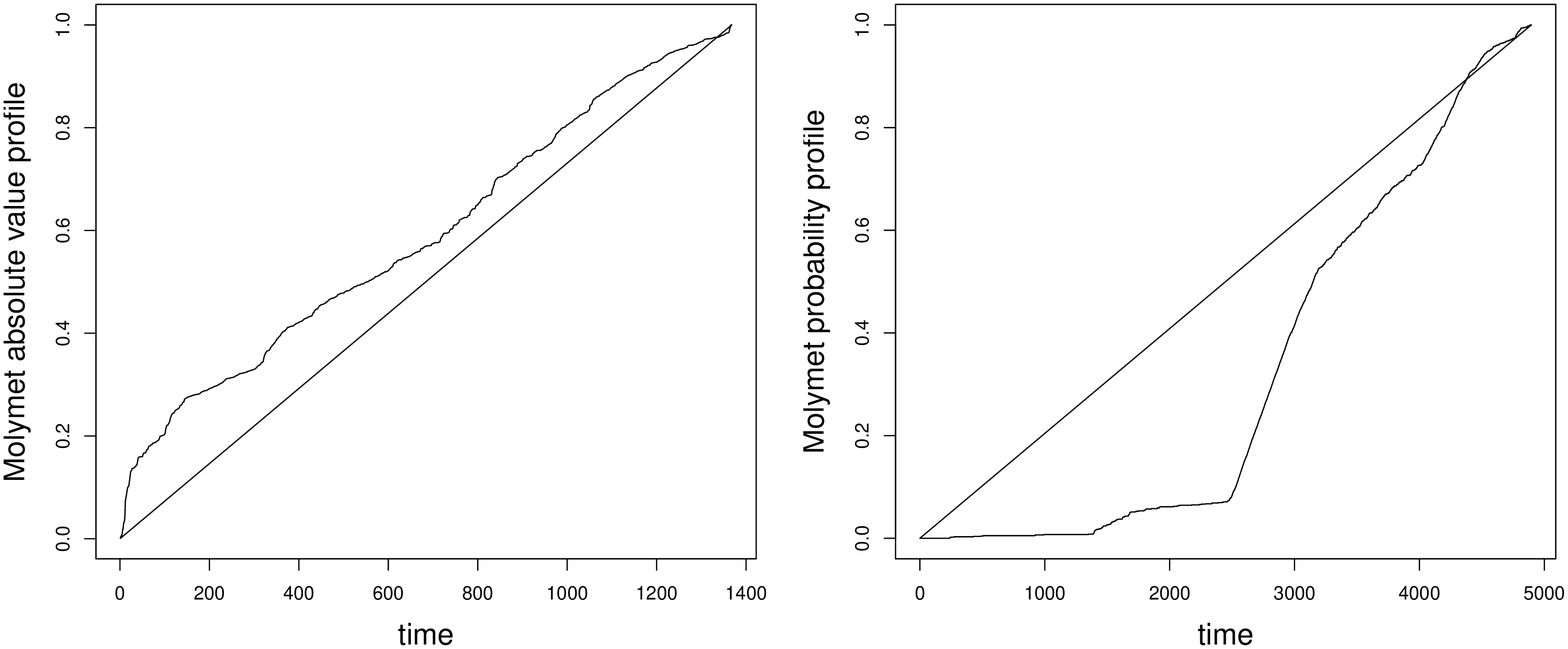}
\protect \includegraphics{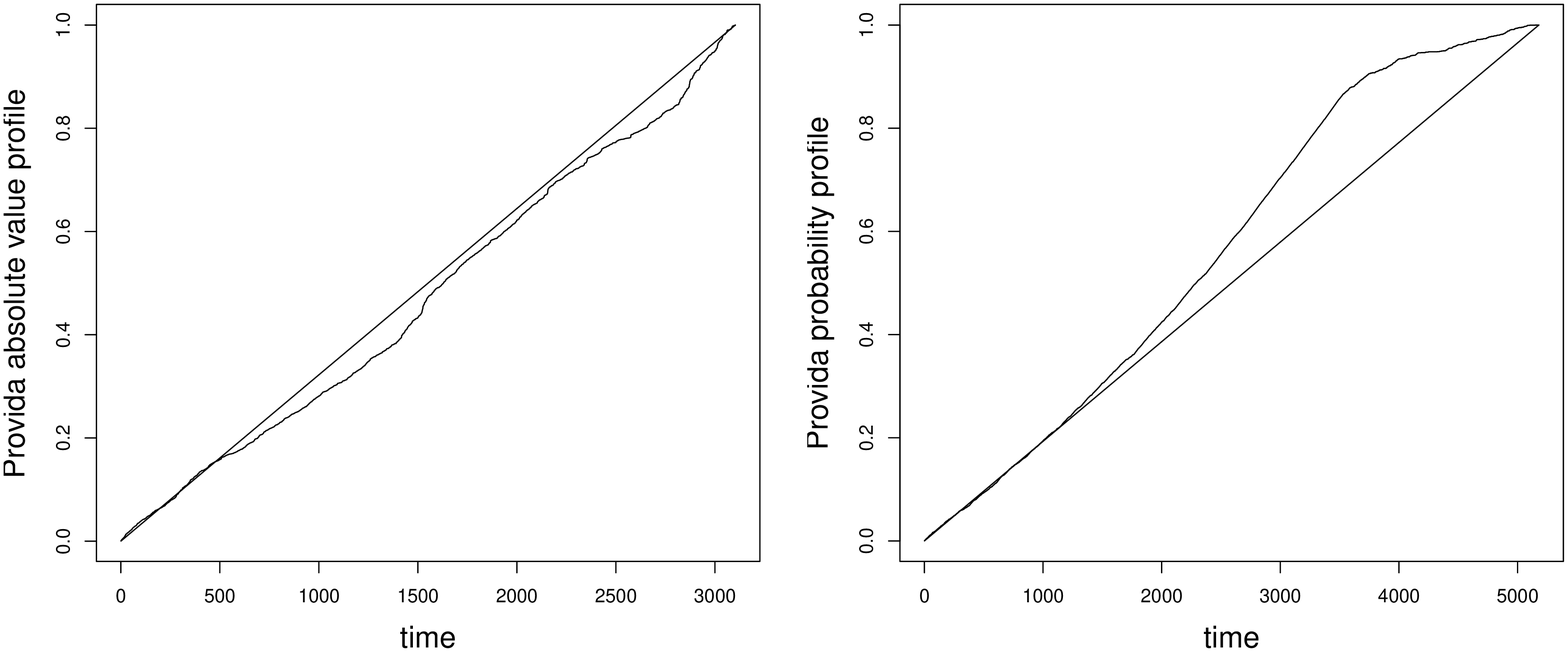}
\caption{\label{ns-stocks-profiles}
{\footnotesize The probability and absolute returns profiles of the studied stocks. The $ar(\cdot)$ and $p(\cdot)$ are to be compared with the identity function defined on $(0,1]$.}}
\end{figure}


\clearpage
\begin{figure}[h]
\begin{center}
 \includegraphics[scale=0.46]{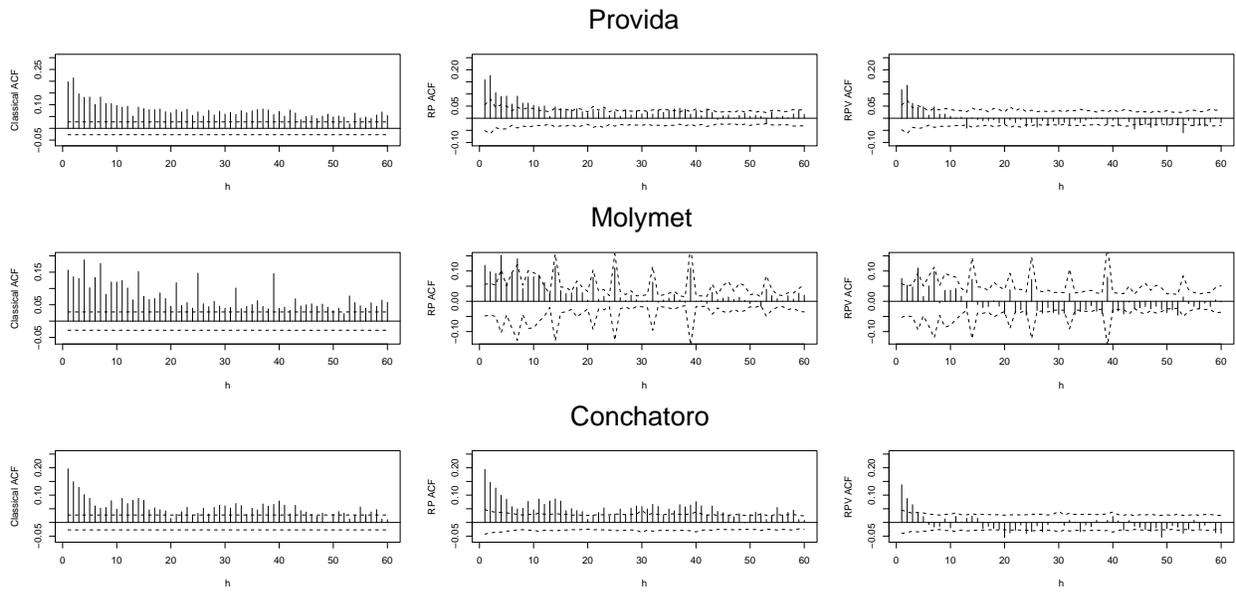}
\end{center}
\caption{The classical (left column), RPV and RP (middle and right columns) absolute returns autocorrelations ($\delta=1$) of the Provida, Molymet and Conchatoro stocks studied in the paper for $h=1,\dots,60$. The dashed lines correspond to the bootstrap and classical 95\% confidence bands.}
\label{fig-index-2}
\end{figure}

\clearpage
\begin{figure}[h]
\begin{center}
 \includegraphics[scale=0.46]{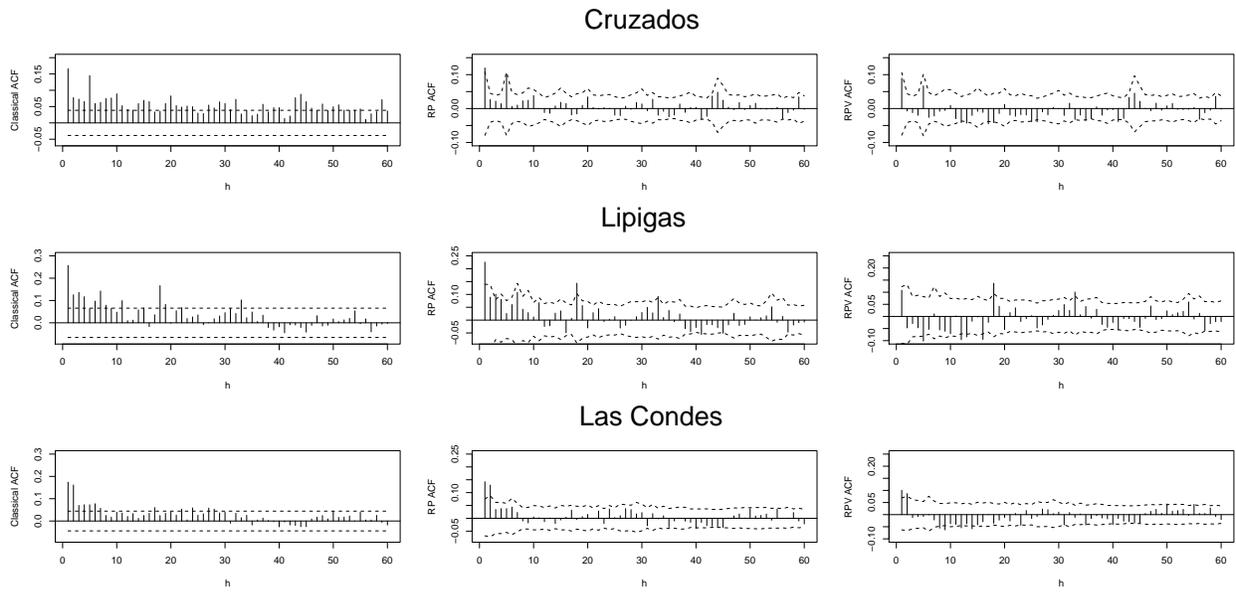}
\end{center}
\caption{The classical (left column), RPV and RP (middle and right columns) absolute returns autocorrelations ($\delta=1$) of the Cruzados, Lipigas, Las Condes stocks studied in the paper for $h=1,\dots,60$. The dashed lines correspond to the bootstrap and classical 95\% confidence bands.}
\label{fig-index-3}
\end{figure}

\end{document}